%
%
%
%
%
\RequirePackage{fix-cm}
\documentclass{svjour3}                     
\smartqed  

\input{preamble.tex}

\begin{document}

\title{Stochastic Dual Dynamic Programming for Multistage Stochastic Mixed-Integer Nonlinear Optimization}

\titlerunning{SDDP for MS-MINLP}        

\author{Shixuan Zhang         \and
        Xu Andy Sun 
}

\institute{Shixuan Zhang \at
				H. Milton Stewart School of Industrial and Systems Engineering, Georgia Institute of Technology\\
				\email{szhang483@gatech.edu}           
			\and
			Xu Andy Sun \at
				H. Milton Stewart School of Industrial and Systems Engineering, Georgia Institute of Technology\\
				\email{andy.sun@isye.gatech.edu}
}

\date{Received: date / Accepted: date}

\maketitle

\begin{abstract}

In this paper, we study multistage stochastic mixed-integer nonlinear programs (MS-MINLP).
This general class of problems encompasses, as important special cases, multistage stochastic convex optimization with \emph{non-Lipschitzian} value functions and multistage stochastic mixed-integer linear optimization. 
We develop stochastic dual dynamic programming (SDDP) type algorithms with nested decomposition, deterministic sampling, and stochastic sampling. 
The key ingredient is a new type of cuts based on generalized conjugacy. 
Several interesting classes of MS-MINLP are identified, where the new algorithms are guaranteed to obtain the global optimum without the assumption of complete recourse. 
This significantly generalizes the classic SDDP algorithms. 
We also characterize the iteration complexity of the proposed algorithms.
In particular, for a $(T+1)$-stage stochastic MINLP with $d$-dimensional state spaces, to obtain an $\epsilon$-optimal root node solution, we prove that the number of iterations of the proposed deterministic sampling algorithm is upper bounded by $\mathcal{O}((\frac{2T}{\epsilon})^d)$, and is lower bounded by $\mathcal{O}((\frac{T}{4\epsilon})^d)$ for the general case or by $\mathcal{O}((\frac{T}{8\epsilon})^{d/2-1})$ for the convex case. 
This shows that the obtained complexity bounds are rather sharp. 
It also reveals that the iteration complexity depends \emph{polynomially} on the number of stages. We further show that the iteration complexity depends \emph{linearly} on $T$,  if all the state spaces are finite sets, or if we seek a $(T\epsilon)$-optimal solution when the state spaces are infinite sets, i.e. allowing the optimality gap to scale with $T$. 
To the best of our knowledge, this is the first work that reports global optimization algorithms as well as iteration complexity results for solving such a large class of multistage stochastic programs. The iteration complexity study resolves a conjecture by the late Prof.\,Shabbir Ahmed in the general setting of multistage stochastic mixed-integer optimization.

\keywords{Multistage stochastic programming \and MINLP \and SDDP \and complexity analysis }
\end{abstract}

\section{Introduction.}
\label{sec:Introduction}

A multistage stochastic mixed-integer nonlinear program (MS-MINLP) is a sequential decision making problem under uncertainty with both continuous and integer decisions and nonconvex nonlinear objective function and constraints. This provides an extremely powerful modeling framework. Special classes of MS-MINLP, such as multistage stochastic linear programming (MS-LP) and mixed-integer linear programming (MS-MILP), have already found a wide range of applications in diverse fields such as
electric power system scheduling and expansion planning \cite{takriti_incorporating_2000,baringo_risk-constrained_2013,zou_multistage_2019,zou_partially_2018}, 
portfolio optimization under risk \cite{bradley_dynamic_1972,kusy_bank_1986,mulvey_stochastic_1992}, and production and capacity planning problems \cite{escudero_production_nodate,chen_scenario-based_2002,ahmed_multi-stage_2000,basciftci_adaptive_2019}, just to name a few. 

Significant progress has been made in the classic nested Benders decomposition (NBD) algorithms for solving MS-LP with  general scenario trees, and an efficient random sampling variation of NBD, the stochastic dual dynamic programming (SDDP) algorithm, is developed for MS-LP with scenario trees having stagewise independent structures. In the past few years, these algorithms are extended to solve MS-MILP \cite{philpott_midas_2020}. 
For example, SDDP is generalized to Stochastic Dual Dynamic integer Programming (SDDiP) algorithm for global optimization of MS-MILP with binary state variables \cite{zou_stochastic_2018,zou_multistage_2019}. Despite the rapid development, key challenges remain in further extending SDDP to the most general problems in MS-MINLP: 
1) There is no general cutting plane mechanism for generating exact under-approximation of nonconvex, discontinuous, or non-Lipschitzian value functions;
2) The computational complexity of SDDP-type algorithms is not well understood even for the most basic MS-LP setting, especially the interplay between iteration complexity of SDDP, optimality gap of obtained solution, number of stages, and dimension of the state spaces of the MS-MINLP. 

This paper aims at developing new methodologies for the solution of these challenges. In particular, we develop a unified cutting plane mechanism in the SDDP framework for generating exact under-approximation of value functions of a large class of MS-MINLP, and develop sharp characterization of the iteration complexity of the proposed algorithms. In the remaining of this section, we first give an overview of the literature, then summarize more details of our contributions.

\subsection{Literature Review}
Benders decomposition \cite{benders_partitioning_1962}, Dantzig-Wolfe decomposition \cite{dantzig_decomposition_1960}, and the L-shaped method \cite{slyke_l-shaped_1969} are standard algorithms for solving two-stage stochastic LPs. Nested decomposition procedures for deterministic models are developed in \cite{ho_nested_1974,glassey_nested_1973}. Louveaux \cite{louveaux_solution_1980} first generalized the two-stage L-shaped method to multistage quadratic problems. Nested Benders decomposition for MS-LP was first proposed in Birge \cite{birge_decomposition_1985} and Pereira and Pinto \cite{pereira_stochastic_1985}. SDDP, the sampling variation of NBD, was first proposed in \cite{pereira_multi-stage_1991}. The largest consumer of SDDP by far is in the energy sector, see e.g. \cite{pereira_multi-stage_1991,shapiro_risk_2013,flach_long-term_2010,lara_deterministic_2018}.

Recently, SDDP has been extended to SDDiP \cite{zou_stochastic_2018}. 
It is observed that the cuts generated from Lagrangian relaxation of the nodal problems in an MS-MILP are always tight at the given parent node's state, as long as all the state variables only take binary values and have complete recourse. 
From this fact, the SDDiP algorithm is proved to find an exact optimal solution in finitely many iterations with probability one. In this way, the SDDiP algorithm makes it possible to solve nonconvex problems through binarization of the state variables \cite{zou_multistage_2019,hjelmeland_nonconvex_2019}.
In addition, when the value functions of MS-MILP with general integer state variables are assumed to be Lipschitz continuous, which is a critical assumption, 
augmented Lagrangian cuts with an additional reverse norm term to the linear part obtained via augmented Lagrangian duality are proposed in \cite{ahmed2019stochastic}.

The convergence analysis of the SDDP-type algorithms begins with the linear cases \cite{philpott_convergence_2008,shapiro_analysis_2011,chen_convergent_1999,linowsky_convergence_2005,guigues2017dual}, where almost sure finite convergence is established based on the polyhedral nodal problem structures.
For convex problems, if the value functions are Lipschitz continuous and the state space is compact, asymptotic convergence of the under-approximation of the value functions leads to asymptotic convergence of the optimal value and optimal solutions \cite{girardeau_convergence_2015,guigues2016convergence}.
By constructing over-approximations of value functions, an SDDP with a deterministic sampling method with asymptotic convergence is proposed for the convex case in \cite{Baucke_Downward_Zakeri}. 
Upon completion of this paper, we became aware of the recent work~\cite{lan2020complexity}, which proves iteration complexity upper bounds for multistage convex programs under the assumption that all the value functions and their under-approximations are all Lipschitz continuous.
It is shown that for discounted problems, the iteration complexity depends linearly on the number of stages.
However, the following conjecture (suggested to us by the late Prof. Shabbir Ahmed) remains to be resolved, especially for the problems without convexity, Lipschitz continuity, or discounts:
\begin{conjecture}\label{conj:ComplexityMotivation}
The number of iterations needed for SDDP/SDDiP to find an optimal first-stage solution grows linearly in terms of the number of stages $T$, while it may depend nonlinearly on other parameters such as the diameter $D$ and the dimension $d$ of the state space.
\end{conjecture}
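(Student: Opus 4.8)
The plan is to resolve the conjecture by pinning down the iteration complexity with matching upper and lower bounds, and then isolating exactly the hypotheses under which the dependence on $T$ collapses to linear. The conceptual heart is a \emph{one-step progress lemma}. For each stage $t$ and each node I would define a gap function measuring the difference between the true value function and its current under-approximation (and, in the deterministic-sampling variant, its over-approximation) at the incoming state. I would then show that whenever a forward pass reaches a state at which the local gap exceeds a threshold $\eta$, the backward pass adds a cut that drives the gap at that state below $\eta$ \emph{and} keeps the gap controlled throughout a neighborhood whose radius is governed by the regularization modulus of the generalized conjugacy cuts rather than by Lipschitz continuity. This localization is what lets the argument survive in the nonconvex, discontinuous, non-Lipschitz setting, where ordinary subgradient or Lagrangian cuts are neither exact nor localizing.

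With the progress lemma in hand, the upper bound follows from a covering argument. Partitioning each $d$-dimensional state space by a grid of mesh comparable to $\eta$ produces on the order of $(D/\eta)^d$ cells per stage, and the progress lemma guarantees that each cell can trigger a gap-reducing cut only boundedly often before it becomes \emph{saturated}, so productive iterations are charged against cells. The role of $T$ enters through the backward accumulation of gaps: since the root-node optimality gap telescopes into a sum of per-stage gaps along the sampled trajectory, an $\epsilon$-optimal root solution forces the per-stage threshold to scale like $\eta \sim \epsilon/T$. Feeding this into the covering count yields a base of order $T/\epsilon$ raised to the power $d$, i.e. the $\mathcal{O}((2T/\epsilon)^d)$ bound; the delicate bookkeeping step is to show, via a single potential/charging argument over all stages simultaneously, that the total iteration count stays at this order rather than being inflated by an additional factor of $T$.

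The two linear-in-$T$ regimes then emerge as degenerations of the same scheme. If every state space is a finite set, the covering count is replaced by the fixed cardinality of the state set, independent of $\eta$, and a termination argument (each productive iteration saturates a previously unsaturated node) gives a bound that is merely linear in $T$. If instead one only seeks a $(T\epsilon)$-optimal root solution over infinite state spaces, the per-stage threshold may be held at the constant $\eta \sim \epsilon$, which removes the $T/\epsilon$ blow-up inside the base and again leaves a linear-in-$T$ bound. This is precisely where I would argue the conjecture is confirmed in spirit yet corrected in generality: linear growth in $T$ is genuine under these hypotheses, but in the general infinite-state setting the true dependence is polynomial of degree $d$.

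Finally, for sharpness I would construct explicit adversarial instances on which any valid-cut SDDP is forced to separate a number of distinct points comparable to the covering number: piecewise-linear convex instances for the $\mathcal{O}((T/8\epsilon)^{d/2-1})$ convex lower bound, and mixed-integer instances for the $\mathcal{O}((T/4\epsilon)^d)$ general lower bound. The main obstacle, I expect, is the progress lemma itself in the absence of convexity and Lipschitz continuity: quantifying the neighborhood in which a generalized conjugacy cut remains tight, and showing that this radius degrades gracefully enough that the covering number stays finite, is the technical crux on which the entire complexity analysis—and hence the resolution of the conjecture—rests.
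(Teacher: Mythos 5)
Your plan follows the paper's resolution essentially step for step: your one-step progress lemma is Lemma~\ref{lemma:NeighborhoodApproximation} (after a backward step the over/under gap stays below $\gamma_n(\delta)$ on a ball of radius $\delta_n/(2L_n)$, with $L_n$ set by the regularization-driven cut parameters rather than any Lipschitz constant of the true value functions), your covering/saturation count is Lemma~\ref{lemma:IndexSetCardinality} feeding Theorems~\ref{thm:DDPComplexityUpperBound} and~\ref{thm:DeterministicDDPComplexityUpperBound}, your three regimes ($\eta\sim\epsilon/T$, $\eta\sim\epsilon$ for $(T\epsilon)$-optimality, finite state spaces) are exactly Corollaries~\ref{cor:AbsoluteOptimalityGapComplexityBound}, \ref{cor:RelativeOptimalityGapComplexityBound} and~\ref{cor:FiniteStateSpaceComplexityBound}, and your packing-based adversarial instances correspond to Theorems~\ref{thm:GeneralLipschitzComplexity} and~\ref{thm:ConvexLipschitzComplexity}, yielding the same verdict on the conjecture. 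The only divergences are minor: the paper's general-case lower bound uses continuous nonconvex Lipschitz chains rather than mixed-integer instances, and it makes no attempt at your factor-$T$-saving charging argument---Theorem~\ref{thm:DeterministicDDPComplexityUpperBound} simply sums the per-stage covering counts, giving $T\left(1+\frac{2LDT}{\epsilon}\right)^d=\mathcal{O}(T^{d+1})$, which is immaterial to whether the dependence on $T$ is linear.
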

Our study resolves this conjecture by giving a full picture of the iteration complexity of 
SDDP-type algorithms in a general setting of MS-MINLP problems that allow exact Lipschitz regularization (defined in Section~\ref{subsec:PenaltyReformulation}).
In the following, we summarize our contributions.

\subsection{Contributions.}
\begin{enumerate}
    \item 
    To tackle the MS-MINLP problems without Lipschitz continuous value functions, which the existing SDDP algorithms and complexity analyses cannot handle,
    we propose \textit{a regularization approach} to provide a surrogate of the original problem such that the value functions become Lipschitz continuous.
    In many cases, the regularized problem preserves the set of optimal solutions.
    \item We use the theory of generalized conjugacy to develop a cut generation scheme, referred to as \textit{generalized conjugacy cuts}, that are valid for value functions of MS-MINLP.
    Moreover, generalized conjugacy cuts are shown to be tight to the regularized value functions. 
    The generalized conjugacy cuts can be replaced by linear cuts without compromising such tightness when the problem is convex.

    \item With the regularization and the generalized conjugacy cuts, we propose three algorithms for MS-MINLP, including nested decomposition for general scenario trees, SDDP algorithms with random sampling as well as deterministic sampling similar to \cite{Baucke_Downward_Zakeri} for stagewise independent scenario trees. 

    \item We obtain upper and lower bounds on the iteration complexity for the proposed SDDP with both sampling methods for MS-MINLP problems. 
    The complexity bounds show that in general, Conjecture~\ref{conj:ComplexityMotivation} holds if only we seek a $(T\epsilon)$-optimal solution, instead of an $\epsilon$-optimal first-stage solution for a $(T+1)$-stage problem, or when all the state spaces are finite sets.
\end{enumerate}

In addition, this paper contains the following contributions compared with the recent independent work~\cite{lan2020complexity}:
(1) We consider a much more general class of problems which are not necessarily convex.
As a result, all the iteration complexity upper bounds of the algorithms are also valid for these nonconvex problems.
(2) We use the technique of regularization to make the iteration complexity bounds independent of the subproblem oracles.
This is particularly important for the conjecture, since the Lipschitz constants of the under-approximation of value functions may exceed those of the original value functions.
(3) We propose matching lower bounds on the iteration complexity of the algorithms and characterize important cases for the conjecture to hold.

This paper is organized as follows. 
In Section~\ref{sec:ProblemFormulations} we introduce the problem formulation, regularization of the value functions, and the approximation scheme using generalized conjugacy.
Section~\ref{sec:DDPAlgorithms} proposes SDDP algorithms. 
Section~\ref{sec:ComplexityUpperBounds} investigates upper bounds on the iteration complexity of the proposed algorithm, while Section \ref{sec:ComplexityLowerBounds} focuses on lower bounds, therefore completes the picture of iteration complexity analysis.  We finally provide some concluding remarks in Section~\ref{sec:Concluding}.

\section{Problem Formulations.}
\label{sec:ProblemFormulations}

In this section, we first present the extensive and recursive formulations of multistage optimization.
Then we characterize the properties of the value functions, with examples to show that they may fail to be Lipschitz continuous.
With this motivation in mind, we propose a penalty reformulation of the multistage problem through regularization of value functions and show that it is equivalent to the original formulation for a broad class of problems. 
Finally, we propose generalized conjugacy cuts for under-approximation of value functions.

\subsection{Extensive and Recursive Formulation.}
\label{subsec:OriginalFormulations}
For a multistage stochastic program, let $\calT=(\calN,\calE)$ be the scenario tree, where $\calN$ is the set of nodes and $\calE$ is the set of edges. 
For each node $n\in\calN$, let $a(n)$ denote the parent node of $n$, $\calC(n)$ denote the set of child nodes of $n$, and $\calT(n)$ denote the subtree starting from the node $n$. 
Given a node $n\in\calN$, let $t(n)$ denote the stage that the node $n$ is in and let $T\coloneqq\max_{n\in\calN}t(n)$ denote the last stage of the tree $\calT$.
A node in the last stage is called a leaf node, otherwise a non-leaf node. 
The set of nodes in stage $t$ is denoted as $\calN(t)\coloneqq\{n\in\calN:t(n)=t\}$. We use the convention that the root node of the tree is denoted as $r\in\calN$ with $t(r)=0$ so the total number of stages is $T+1$. 
The parent node of the root node is denoted as $a(r)$, which is a dummy node for ease of notation. 

For every node $n\in\calN$, let $\calF_n$ denote the feasibility set in some Euclidean space of decision variables $(x_n,y_n)$ of the nodal problem at node $n$. 
We refer to $x_n$ as the state variable and $y_n$ as the internal variable of node $n$.
Denote the image of the projection of $\calF_n$ onto the subspace of the variable $x_n$ as $\calX_n$, which is referred to as the state space.
Let $x_{a(r)}=0$ serve as a dummy parameter and thus 
$\calX_{a(r)}=\{0\}$.	
The nonnegative nodal cost function of the problem at node $n$ is denoted as $f_n(x_{a(n)},y_n,x_n)$ and is defined on the set $\{(z,y,x):z\in\calX_{a(n)},(x,y)\in\calF_n\}$.
We allow $f_n$ to take the value $+\infty$ so indicator functions can be modeled as part of the cost.
Let $p_n>0$ for all $n\in\calN$ denote the probability that node $n$ on the scenario tree is realized. For the root node, $p_r=1$. 
The  transition probability that node $m$ is realized conditional on its parent node $n$ being realized is given by $p_{nm}:=p_m/p_n$ for all edges $(n,m)\in\calE$.

The multistage stochastic program considered in this paper is defined in the following extensive form: 
\begin{equation}\label{eq:ExtensiveForm}
	v^{\primal}\coloneqq\min_{\substack{(x_n,y_n)\in\calF_n,\\\forall\,n\in\calN}}\sum_{n\in\calN}p_n f_n(x_{a(n)},y_n,x_n).
\end{equation}

The recursive formulation of the problem~\eqref{eq:ExtensiveForm} is defined as
\begin{align}\label{eq:NodalProblem}
Q_{n}(x_{a(n)})\coloneqq\min_{(x_n,y_n)\in\calF_n} \biggl\{f_n(x_{a(n)},y_n,x_n)+\sum_{m\in\calC(n)}p_{nm}Q_{m}(x_n)\biggr\},
\end{align}
where $n\in\calT$ is a non-leaf node and $Q_n(x_{a(n)})$ is the \emph{value function} of node $n$. 
At a leaf node, the sum in \eqref{eq:NodalProblem} reduces to zero, as there are no child nodes $\calC(n)=\varnothing$. 
The problem on the right-hand side of \eqref{eq:NodalProblem} is called the \emph{nodal problem} of node $n$. 
Its objective function consists of the nodal cost function $f_n$ and the \emph{expected cost-to-go function}, which is denoted as $\calQ_n$ for future reference, i.e.
\begin{equation}\label{eq:DefinitionCostToGo}
\calQ_n(x_n)\coloneqq\sum_{m\in\calC(n)}p_{nm}Q_{m}(x_n).
\end{equation}

To ensure that the minimum in problem~\eqref{eq:ExtensiveForm} is well defined and finite, we make the following very general assumption on $f_n$ and $\calF_n$ throughout the paper. 
\begin{assumption}\label{assum:MinCondition}
	For every node $n\in\calN$,
	the feasibility set $\calF_n$ is compact, and
	the nodal cost function $f_n$ is nonnegative and lower semicontinuous (l.s.c.). 
	The sum $\sum_{n\in\calN}p_nf_n$ is a proper function, i.e., there exists $(x_n,y_n)\in\calF_n$ for all nodes $n\in\calN$ such that the sum $\sum_{n\in\calN}p_nf_n(x_{a(n)},y_n,x_n)<+\infty$.
\end{assumption}

Note that the state variable $x_{a(n)}$ only appears in the objective function $f_n$ of node $n$, not in the constraints. 
Perhaps the more common way is to allow $x_{a(n)}$ to appear in the constraints of node $n$. 
It is easy to see that any such constraint can be modeled by an indicator function of $(x_{a(n)},x_n,y_n)$ in the objective $f_n$.

\subsection{Continuity and Convexity of Value Functions.}
\label{sec:PropertiesValueFunction}
The following proposition presents some basic properties of the value function $Q_n$ under Assumption \ref{assum:MinCondition}.
\begin{proposition}\label{prop:OptimalValueFunctionProperties}
Under Assumption \ref{assum:MinCondition}, the value function $Q_n$ is lower semicontinuous (l.s.c.) for all $n\in\calN$. Moreover, for any node $n\in\calN$,
	\begin{enumerate}
            \item if $f_n(z,y,x)$ is Lipschitz continuous in the first variable $z$ 
            with constant $l_n$, i.e. $|f_n(z,y,x)-f_n(z',y,x)|\le l_n\|z-z'\|$ for any $z,z'\in\calX_{a(n)}$ and any $(x,y)\in\calF_n$, then $Q_n$ is also Lipschitz continuous with constant $l_n$;
            \item if $\calX_{a(n)}$ and $\calF_n$ are convex sets, and $f_n$ and $\calQ_n$ are convex functions, then $Q_n$ is also convex.
	\end{enumerate}
\end{proposition}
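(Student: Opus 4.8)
The plan is to treat all three claims as statements about the parametric minimization
\[
Q_n(z) = \min_{(x_n,y_n)\in\calF_n}\bigl\{ f_n(z,y_n,x_n) + \calQ_n(x_n)\bigr\},
\]
viewing $z = x_{a(n)}$ as the parameter and $\calQ_n(x_n)=\sum_{m\in\calC(n)}p_{nm}Q_m(x_n)$ as an additive term that does not depend on $z$. Since $\calF_n$ is compact, the unifying tool is the behavior of the \emph{marginal function} of a well-behaved integrand under minimization over a fixed compact set. The l.s.c.\ claim propagates through the scenario tree, so I would prove it by backward induction from the leaves, whereas the Lipschitz and convexity claims are essentially single-node arguments once the needed property of $\calQ_n$ is available.

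For lower semicontinuity I would first isolate the following lemma: if $g(z,w)$ is jointly l.s.c.\ and $K$ is compact, then $z \mapsto \min_{w\in K} g(z,w)$ is l.s.c. The proof takes $z_k\to z_0$ along a subsequence realizing $\liminf_k$ of the marginal values, selects minimizers $w_k\in K$ (which exist because $g(z_k,\cdot)$ is l.s.c.\ on the compact $K$), extracts a convergent subsequence $w_k\to w_0\in K$, and chains $\liminf_k g(z_k,w_k)\ge g(z_0,w_0)\ge \min_{w\in K} g(z_0,w)$ using joint l.s.c. With this lemma in hand, the base case is a leaf node, where $g=f_n$ is l.s.c.\ by Assumption~\ref{assum:MinCondition}. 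For the inductive step I would note that $\calQ_n$ is a nonnegative linear combination of the $Q_m$, $m\in\calC(n)$, each l.s.c.\ by the inductive hypothesis, hence l.s.c.; therefore the integrand $f_n(z,y,x)+\calQ_n(x)$ is jointly l.s.c.\ in $(z,y,x)$, and the lemma applied with $w=(x,y)$ and $K=\calF_n$ yields that $Q_n$ is l.s.c.

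For the Lipschitz estimate I would give a direct one-node argument that does not require induction. Fix $z,z'\in\calX_{a(n)}$ and let $(x^*,y^*)\in\calF_n$ attain $Q_n(z')$ (attainment follows from l.s.c.\ of the integrand and compactness of $\calF_n$). Substituting this feasible point into the definition of $Q_n(z)$ and using the Lipschitz bound on $f_n$ in its first argument gives $Q_n(z)\le f_n(z,y^*,x^*)+\calQ_n(x^*)\le Q_n(z')+l_n\|z-z'\|$; the term $\calQ_n(x^*)$ survives unchanged precisely because it is independent of the parameter. Swapping the roles of $z$ and $z'$ gives the reverse inequality, and the two together establish $|Q_n(z)-Q_n(z')|\le l_n\|z-z'\|$.

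For convexity I would again argue directly at a single node, using that $\calX_{a(n)}$ and $\calF_n$ are convex and $f_n,\calQ_n$ are convex. Given $z_1,z_2$ and $\lambda\in[0,1]$, I would take minimizers $(x_i,y_i)$ of $Q_n(z_i)$, observe that the convex combination $(\lambda x_1+(1-\lambda)x_2,\lambda y_1+(1-\lambda)y_2)$ lies in $\calF_n$, and use it as a feasible candidate for $Q_n(\lambda z_1+(1-\lambda)z_2)$; joint convexity of $f_n$ and convexity of $\calQ_n$ then bound the objective above by $\lambda Q_n(z_1)+(1-\lambda)Q_n(z_2)$. Equivalently, this is the standard fact that partial minimization of a jointly convex function over a convex set produces a convex marginal function. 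I expect the main obstacle to lie in the l.s.c.\ part, specifically in verifying the marginal-function lemma carefully: one must guarantee attainment of the inner minima, handle the extraction of convergent subsequences of minimizers, and accommodate the possibility that $f_n$ (and hence the integrand) takes the value $+\infty$, so that the joint l.s.c.\ hypothesis is exactly what keeps the $\liminf$ chain valid.
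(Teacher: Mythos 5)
Your proposal is correct, and for claims 1 and 2 it coincides with the paper's proof essentially word for word: take minimizers at each parameter value, reuse them as feasible candidates at the other parameter, and exploit the fact that $\calQ_n$ does not depend on $z$ (for the Lipschitz bound) or joint convexity of the integrand together with convexity of $\calF_n$ and $\calX_{a(n)}$ (for convexity). The only genuine divergence is the mechanism for lower semicontinuity. You isolate a marginal-function lemma and prove it sequentially: pick minimizers $w_k\in K$ for $z_k\to z_0$, extract a convergent subsequence $w_k\to w_0$ by compactness, and chain $\liminf_k g(z_k,w_k)\ge g(z_0,w_0)\ge\min_{w\in K}g(z_0,w)$ using joint l.s.c.; the paper instead shows each sublevel set $\lev_a(Q_n)$ is closed by writing it as the image, under the continuous projection $(z,y,x)\mapsto z$, of the compact set $\{(z,y,x): (x,y)\in\calF_n,\ f_n(z,y,x)+\calQ_n(x)\le a\}$. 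Both versions run the identical backward induction from the leaves (using $p_{nm}>0$ and nonnegativity so that $\calQ_n$ inherits l.s.c.\ and no $\infty-\infty$ issues arise), and both need exactly the same hypotheses. Your sequential route is more self-contained and makes the role of attainment of inner minima explicit; the paper's projection route is slightly slicker in that closedness of level sets comes out in one step, with compactness absorbing the subsequence extraction. Neither argument is more general than the other here, and your handling of the $+\infty$ values of $f_n$ via joint l.s.c.\ is exactly what the paper's compact-sublevel-set argument also relies on.
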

The proof is given in Section~\ref{app:proof:prop:OptimalValueFunctionProperties}.
When $Q_m$ is l.s.c.\ for all $m\in\calC(n)$, the sum $\sum_{m\in\calC(n)}p_{nm}Q_m$ is l.s.c..
Therefore, the minimum in the definition~\eqref{eq:NodalProblem} is well define, since $\calF_n$ is assumed to be compact. 

If the objective function $f_n(x_{a(n)},y_n,x_n)$ is not Lipschitz, e.g., when it involves an indicator function of $x_{a(n)}$, or equivalently when $x_{a(n)}$ appears in the constraint of the nodal problem of $Q_n(x_{a(n)})$, then the value function $Q_n$ may not be Lipschitz continuous, as is shown by the following examples.
\begin{example}
	\label{ex:ConvexNonLipschitz}
	Consider the convex nonlinear two-stage problem
	\begin{align*}
		v^*\coloneqq\min_{x,z,w}\biggl\{x + z \; : \ (z-1)^2 + w^2\le 1,\ w=x,\ x\in[0,1]\biggr\}.
	\end{align*}
	The objective function and all constraints are Lipschitz continuous. 
	The optimal objective value $v^*=0$, and the unique optimal solution is $(x^*,z^*,w^*)=(0,0,0)$. 
	At the optimal solution, the inequality constraint is active. 
	Note that the problem can be equivalently written as $v^*=\min_{0\le x\le 1}\ x + Q(x)$,
	where $Q(x)$ is defined on $[0,1]$ as $Q(x)\coloneqq\min\bigl\{z : \exists\, w \in \R, \, \st \, (z-1)^2 + w^2 \le 1, \, w=x\bigr\}=1-\sqrt{1-x^2}$, which  is not locally Lipschitz continuous at the boundary point $x=1$. Therefore, $Q(x)$ is not Lipschitz continuous on $[0,1]$.
\end{example}

\begin{example}
	\label{ex:MixedIntegerNonLipschitz}
	Consider the mixed-integer linear two-stage problem
	\begin{align*}
		v^*\coloneqq\min\biggl\{1-2x+z \; : \;  z\ge x,\ x\in[0,1],\ z\in\{0,1\}\biggr\}.
	\end{align*}
	The optimal objective value is $v^*=0$, and the unique optimal solution is $(x^*,z^*)=(1,1)$.
	Note that the problem can be equivalently written as $v^*=\min\{1-2x+Q(x) \; : \; 0\le x\le 1\}$,
	where the function $Q(x)$ is defined on $[0,1]$ as $Q(x)\coloneqq\min\{z\in\{0,1\}:z\ge x\}$, which equals $0$ if $x=0$, and $1$ for all $0<x\le1$, i.e. $Q(x)$ is discontinuous at $x=0$, therefore, it is not Lipschitz continuous on $[0,1]$.
\end{example}

These examples show a major issue with the introduction of value functions $Q_n$, namely $Q_n$ may fail to be Lipschitz continuous even when the original problem only has constraints defined by Lipschitz continuous functions. 
This could lead to failure of algorithms based on approximation of the value functions. 
In the next section, we will discuss how to circumvent this issue without compromise of feasibility or optimality for a wide range of problems.

\subsection{Regularization and Penalty Reformulation.}
\label{subsec:PenaltyReformulation}
The main idea of avoiding failure of cutting plane algorithms in multistage dynamic programming is to use some Lipschitz continuous envelope functions to replace the original value functions, which we refer to as \emph{regularized value functions}. 

To begin with, we say a function $\psi:\R^d\to\R_+$ is a \emph{penalty function}, if $\psi(x)=0$ if and only if $x=0$, and the diameter of its level set $\lev_a(\psi):=\{x\in\R^d : \psi(x)\le\alpha\}$ approaches 0 when $a\to0$. 
In this paper, we focus on penalty functions that are locally Lipschitz continuous, the reason for which will be clear from Proposition~\ref{prop:InfConvolutionLipschitz}.

For each node $n$, we introduce a new variable $z_n$ as a local variable of node $n$ and impose the duplicating constraint $x_{a(n)}=z_n$. 
This is a standard approach for obtaining dual variables through relaxation (e.g.~\cite{zou_stochastic_2018}).
The objective function can then be written as $f_n(z_n,y_n,x_n)$. Let $\psi_n$ be a penalty function for node $n\in\calN$. 
The new coupling constraint is relaxed and penalized in the objective function by $\sigma_n\psi_n(x_{a(n)}-z_n)$ for some $\sigma_n>0$. 
Then the DP recursion with penalization becomes
\begin{equation}\label{eq:RegularizedNodalProblem}
Q^\Reg_{n}(x_{a(n)})\coloneqq\min_{\substack{(x_n,y_n)\in\calF_n,\\z_n\in\calX_{a(n)}}} \biggl\{f_n(z_n,y_n,x_n)+\sigma_n\psi_n(x_{a(n)}-z_n)+\sum_{m\in\calC(n)}p_{nm}Q^\Reg_m(x_n)\biggr\},
\end{equation}
for all $n\in\calN$, and $Q_n^\Reg$ is referred to as the regularized value function. 
By convention, $\calX_{a(r)}=\{x_{a(r)}\}=\{0\}$ and therefore, penalization $\psi_r(x_{a(r)}-z_r)\equiv 0$ for any $z_r\in\calX_{a(r)}$.
Since the state spaces are compact, without loss of generality, we can scale the penalty functions $\psi_n$ such that the Lipschitz constant of $\psi_n$ on $\calX_{a(n)}-\calX_{a(n)}$ is 1. 
The following proposition shows that $Q_n^\Reg$ is a Lipschitz continuous envelope function of $Q_n$ for all nodes $n$.
\begin{proposition}\label{prop:InfConvolutionLipschitz}
    Suppose $\psi_n$ is a $1$-Lipschitz continuous penalty function on the compact set $\calX_{a(n)}-\calX_{a(n)}$ for all $n\in\calN$.
    Then $Q_n^\Reg(x)\le Q_n(x)$ for all $x\in\calX_{a(n)}$ and $Q_n^\Reg(x)$ is $\sigma_n$-Lipschitz continuous on $\calX_{a(n)}$.
    Moreover, if the original problem~\eqref{eq:NodalProblem} and the penalty functions $\psi_n$ are convex, then $Q_n^\Reg(x)$ is also convex.
\end{proposition}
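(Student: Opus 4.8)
The plan is to establish all three assertions by backward induction over the scenario tree, from the leaf nodes toward the root, using the observation that the recursion~\eqref{eq:RegularizedNodalProblem} realizes $Q^\Reg_n$ as an infimal convolution with the scaled penalty $\sigma_n\psi_n$. Concretely, I would first record the factorization that drives the whole argument: writing
\[
g_n(z):=\min_{(x_n,y_n)\in\calF_n}\Bigl\{f_n(z,y_n,x_n)+\sum_{m\in\calC(n)}p_{nm}Q^\Reg_m(x_n)\Bigr\},
\]
the definition~\eqref{eq:RegularizedNodalProblem} reads $Q^\Reg_n(x)=\min_{z\in\calX_{a(n)}}\{g_n(z)+\sigma_n\psi_n(x-z)\}$, i.e.\ the inf-convolution of $g_n+\iota_{\calX_{a(n)}}$ with $\sigma_n\psi_n$ over the compact set $\calX_{a(n)}$. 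Note that once $g_n$ is finite at a single point $z_0\in\calX_{a(n)}$, the bound $Q^\Reg_n(x)\le g_n(z_0)+\sigma_n\psi_n(x-z_0)$ together with finiteness of $\psi_n$ on the compact set $\calX_{a(n)}-\calX_{a(n)}$ forces $Q^\Reg_n$ to be finite on all of $\calX_{a(n)}$; finiteness at one point is in turn inherited from Assumption~\ref{assum:MinCondition}.

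For the inequality $Q^\Reg_n\le Q_n$, the leaf-node base case follows by taking $z_n=x_{a(n)}$, which annihilates the penalty since $\psi_n(0)=0$, so the regularized problem reduces to~\eqref{eq:NodalProblem}. For a non-leaf node, assuming $Q^\Reg_m\le Q_m$ on $\calX_n$ for every child $m\in\calC(n)$, the same choice $z_n=x_{a(n)}$ together with any feasible $(x_n,y_n)$ yields
\[
Q^\Reg_n(x_{a(n)})\le f_n(x_{a(n)},y_n,x_n)+\sum_{m\in\calC(n)}p_{nm}Q^\Reg_m(x_n)\le f_n(x_{a(n)},y_n,x_n)+\sum_{m\in\calC(n)}p_{nm}Q_m(x_n),
\]
and minimizing the right-hand side over $\calF_n$ gives $Q^\Reg_n\le Q_n$.

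For Lipschitz continuity I would fix $x,x'\in\calX_{a(n)}$ and, for arbitrary $\epsilon>0$, select a feasible triple $(z,x_n,y_n)$ whose objective value is within $\epsilon$ of $Q^\Reg_n(x)$. Since $z\in\calX_{a(n)}$, the same triple is feasible at $x'$, whence $Q^\Reg_n(x')-Q^\Reg_n(x)\le\epsilon+\sigma_n[\psi_n(x'-z)-\psi_n(x-z)]$. Both $x'-z$ and $x-z$ lie in $\calX_{a(n)}-\calX_{a(n)}$, on which $\psi_n$ is $1$-Lipschitz, so the bracket is at most $\|(x'-z)-(x-z)\|=\|x'-x\|$; letting $\epsilon\to0$ and exchanging the roles of $x$ and $x'$ gives $|Q^\Reg_n(x')-Q^\Reg_n(x)|\le\sigma_n\|x'-x\|$. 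The point of this argument is that it uses no regularity of $g_n$ at all---this is exactly why the penalization restores Lipschitz continuity even when $Q_n$ is not Lipschitz, as in Examples~\ref{ex:ConvexNonLipschitz} and~\ref{ex:MixedIntegerNonLipschitz}---and the approximate-minimizer device sidesteps any question of attainment.

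For the convexity claim I would again induct. Assuming each $Q^\Reg_m$ is convex on the convex set $\calX_n$, the integrand $f_n(z,y,x)+\sum_{m}p_{nm}Q^\Reg_m(x)$ is jointly convex in $(z,y,x)$, so its partial minimization over the convex set $\calF_n$, namely $g_n$, is convex, and adding the indicator $\iota_{\calX_{a(n)}}$ of the convex state space keeps it convex; since $\sigma_n\psi_n$ is convex, the inf-convolution above is convex, and this is $Q^\Reg_n$ on $\calX_{a(n)}$. The main obstacle throughout is the bookkeeping of domains and attainment: one must check that $g_n$ is proper and lower semicontinuous so that the displayed minima are genuinely attained (using compactness of $\calF_n$ and $\calX_{a(n)}$ together with lower semicontinuity of $Q^\Reg_m$, inherited exactly as in Proposition~\ref{prop:OptimalValueFunctionProperties}), and that partial minimization and inf-convolution preserve convexity on these restricted compact domains; apart from this, the three parts are short.
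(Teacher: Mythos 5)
Your proof is correct, but it is organized around a different factorization than the paper's. The paper performs the inner minimization over $z_n$ first, defining the partial inf-convolution $f_n\square(\sigma_n\psi_n)(x_{a(n)},y_n,x_n)=\min_{z\in\calX_{a(n)}}\{f_n(z,y_n,x_n)+\sigma_n\psi_n(x_{a(n)}-z)\}$, shows (via exact minimizers, available by compactness and lower semicontinuity) that this modified stage cost is $\sigma_n$-Lipschitz in its first argument and convex under the convexity hypotheses, and then simply observes that $Q_n^\Reg$ is the value function of the original recursion with $f_n$ replaced by this modified cost, so Proposition~\ref{prop:OptimalValueFunctionProperties} delivers all the conclusions at once. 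You instead minimize over $(x_n,y_n)$ first, writing $Q_n^\Reg$ directly as the inf-convolution of $g_n+\iota_{\calX_{a(n)}}$ with $\sigma_n\psi_n$, and prove the Lipschitz bound by hand with $\epsilon$-approximate minimizers. The paper's route buys brevity and modularity (it reuses an already-proven proposition, and the monotonicity argument behind $Q_n^\Reg\le Q_n$ and the convexity induction are left implicit in that reduction); your route buys two things worth noting: the $\epsilon$-minimizer device removes any need to verify attainment or lower semicontinuity of the inner value $g_n$, and your formulation makes explicit that the $\sigma_n$-Lipschitz conclusion uses no regularity of $g_n$ whatsoever---which is precisely the mechanism by which regularization repairs the failures in Examples~\ref{ex:ConvexNonLipschitz} and~\ref{ex:MixedIntegerNonLipschitz}. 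Your explicit treatment of finiteness propagation and the backward inductions for the inequality $Q_n^\Reg\le Q_n$ and for convexity are also correct and fill in steps the paper compresses.
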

The key idea is that by adding a Lipschitz function $\psi_n$ into the nodal problem, we can make $Q_n^R(x)$ Lipschitz continuous even when $Q_n(x)$ is not. 
The proof is given in Section~\ref{app:proof:prop:InfConvolutionLipschitz}. 
The optimal value of the regularized root nodal problem
\begin{align}\label{eq:DefinitionRootNodeProblem}
    v^\reg\coloneqq\min_{(x_r,y_r)\in\calF_r}\biggl\{f_r(x_{a(r)},y_r,x_r)+\sum_{m\in\calC(r)}p_{rm}Q_m^\Reg(x_r)\biggr\}
\end{align}
is thus an underestimation of $v^\primal$, i.e. $v^\reg \le v^\primal$. 
For notational convenience, we also define the regularized expected cost-to-go function for each node $n$ as:
\begin{equation}\label{eq:DefinitionPenalCostToGo}
	\calQ_n^\Reg(x_n)\coloneqq\sum_{m\in\calC(n)}p_{nm}Q_{m}^\Reg(x_n).
\end{equation}
\begin{definition}
For any $\epsilon>0$, a feasible root node solution $(x_r,y_r)\in\calF_r$ is said to be \emph{$\epsilon$-optimal} to the regularized problem~\eqref{eq:RegularizedNodalProblem} if it satisfies $f_r(x_{a(r)},y_r,x_r)+\calQ_r^\Reg(x_r)\le v^\reg+\epsilon$.
\end{definition}

Next we discuss conditions under which $v^\reg=v^\primal$ and any optimal solution $(x_n,y_n)_{n\in\calN}$ to the regularized problem~\eqref{eq:RegularizedNodalProblem} is feasible and hence optimal to the original problem~\eqref{eq:NodalProblem}.
Note that by expanding $Q^\Reg_m$ in the regularized problem~\eqref{eq:RegularizedNodalProblem} for all nodes, we obtain the extensive formulation for the regularized problem:
\begin{equation}\label{eq:PenalizedExtensiveForm}
v^{\reg}=\min_{\substack{(x_n,y_n)\in\calF_n,n\in\calN\\z_n\in\calX_{a(n)}}}\sum_{n\in\calN}p_{n}\bigl(f_{n}(z_n,y_n,x_n)+\sigma_{n}\psi_{n}(x_{a(n)}-z_{n})\bigr).
\end{equation}
We refer to problem~\eqref{eq:PenalizedExtensiveForm} as the \textit{penalty reformulation} and make the following assumption on its exactness for the rest of the paper.
\begin{assumption}\label{assum:ExactPenalty}
	We assume that the penalty reformulation \eqref{eq:PenalizedExtensiveForm} is exact for the given penalty parameters $\sigma_n>0$, $n\in\calN$, i.e., any optimal solution of  \eqref{eq:PenalizedExtensiveForm} satisfies $z_n=x_{a(n)}$ for all $n\in\calN$.
\end{assumption}
Assumption \ref{assum:ExactPenalty} guarantees the solution of the regularized extensive formulation~\eqref{eq:PenalizedExtensiveForm} is feasible for the original problem \eqref{eq:ExtensiveForm}, then by the fact that $v^\reg\le v^\primal$, is also optimal to the original problem, we have $v^\reg=v^\primal$. 
Thus regularized value functions serve as a surrogate of the original value function, without compromise of feasibility of its optimal solutions.
A consequence of Assumption~\ref{assum:ExactPenalty} is that the original and regularized value functions coincide at all optimal solutions, the proof of which is given in Section~\ref{app:proof:lemma:RegOptimalValueFunction}.
\begin{lemma}
	\label{lemma:RegOptimalValueFunction}
	Under Assumption \ref{assum:ExactPenalty}, any optimal solution $(x_n,y_n)_{n\in\calN}$ to problem~\eqref{eq:ExtensiveForm} satisfies $Q_n^\Reg(x_{a(n)})=Q_n(x_{a(n)})$ for all $n\neq r$.
\end{lemma}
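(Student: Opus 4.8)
The plan is to combine two ingredients already in hand: the pointwise envelope inequality $Q_n^\Reg\le Q_n$ from Proposition~\ref{prop:InfConvolutionLipschitz}, and the global equality $v^\reg=v^\primal$ that Assumption~\ref{assum:ExactPenalty} supplies (as noted right after the assumption). The gap between $Q_n$ and $Q_n^\Reg$ will be shown to vanish by propagating the vanishing of the total gap down the scenario tree through the Bellman recursions. First I would record that the given optimal solution $(x_n,y_n)_{n\in\calN}$ of \eqref{eq:ExtensiveForm}, augmented with the duplicated states $z_n:=x_{a(n)}$, is optimal for the penalty reformulation \eqref{eq:PenalizedExtensiveForm}: it is feasible there, every penalty term $\sigma_n\psi_n(x_{a(n)}-z_n)=\sigma_n\psi_n(0)$ vanishes, so its objective equals $\sum_{n\in\calN}p_nf_n(x_{a(n)},y_n,x_n)=v^\primal$, which under Assumption~\ref{assum:ExactPenalty} equals $v^\reg$; hence it is optimal.

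Next I would invoke the principle of optimality for the two dynamic programs \eqref{eq:NodalProblem} and \eqref{eq:RegularizedNodalProblem}. Since each $\calF_n$ is compact and each $f_n$ is l.s.c.\ (Assumption~\ref{assum:MinCondition}), the extensive and recursive forms coincide, and the restriction of a global minimizer to any subtree $\calT(n)$ must minimize that subtree's cost for the realized parent state $x_{a(n)}$, thereby attaining the corresponding value function there. Carrying this out node by node for the original and the regularized problem (using $z_n=x_{a(n)}$ so the penalty drops out in the second line) yields the Bellman equalities along the optimal trajectory:
\begin{align*}
Q_n(x_{a(n)}) &= f_n(x_{a(n)},y_n,x_n) + \sum_{m\in\calC(n)} p_{nm}\, Q_m(x_n), \\
Q_n^\Reg(x_{a(n)}) &= f_n(x_{a(n)},y_n,x_n) + \sum_{m\in\calC(n)} p_{nm}\, Q_m^\Reg(x_n).
\end{align*}

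Finally, set $\delta_n:=Q_n(x_{a(n)})-Q_n^\Reg(x_{a(n)})\ge 0$, where nonnegativity is Proposition~\ref{prop:InfConvolutionLipschitz}. Subtracting the two equalities and using $x_{a(m)}=x_n$ for $m\in\calC(n)$ gives the homogeneous recursion $\delta_n=\sum_{m\in\calC(n)}p_{nm}\,\delta_m$. At the root, $\delta_r=Q_r(0)-Q_r^\Reg(0)=v^\primal-v^\reg=0$, using \eqref{eq:DefinitionRootNodeProblem} and $\calX_{a(r)}=\{0\}$. Because the $p_{nm}$ are strictly positive and every $\delta_m\ge 0$, the identity $0=\delta_r=\sum_{m\in\calC(r)}p_{rm}\,\delta_m$ forces $\delta_m=0$ for each child of $r$; an induction on the stage index $t(n)$ then descends the tree and gives $\delta_n=0$ for every node, in particular for all $n\neq r$, which is the claim (the equality in fact holds at $r$ as well). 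I expect the main obstacle to be making the second step precise: one must argue carefully that a solution optimal for the whole extensive form is simultaneously optimal for each subtree subproblem at its realized parent state. This is where compactness, lower semicontinuity, and the finiteness of the optimal cost---so that no $+\infty$ terms arise along the optimal path---are needed; once the Bellman equalities are established, the gap-closing descent is immediate.
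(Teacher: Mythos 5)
Your proof is correct, but it takes a genuinely different route from the paper's. The paper argues by contradiction localized at a single node: if $Q_n^\Reg(x_{a(n)})<Q_n(x_{a(n)})$ for some $n$, it takes a subtree solution attaining $Q_n^\Reg(x_{a(n)})$ (which exists by compactness and lower semicontinuity), splices it with the given optimal solution outside $\calT(n)$, and obtains a feasible point of the penalty reformulation~\eqref{eq:PenalizedExtensiveForm} whose objective is $p_nQ_n^\Reg(x_{a(n)})+\sum_{m\notin\calT(n)}p_mf_m(x_{a(m)},y_m,x_m)<v^\primal$, so that $v^\reg<v^\primal$, contradicting the consequence $v^\reg=v^\primal$ of Assumption~\ref{assum:ExactPenalty}. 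Note that this direction of splicing needs only $p_nQ_n(x_{a(n)})\le\sum_{m\in\calT(n)}p_mf_m(x_{a(m)},y_m,x_m)$, i.e., \emph{feasibility} of the restriction of the optimal solution to the subtree, which is immediate from the definition of $Q_n$ as a minimum. Your route instead establishes the full principle of optimality (Bellman equality along the optimal trajectory) for \emph{both} recursions \eqref{eq:NodalProblem} and \eqref{eq:RegularizedNodalProblem}, and then propagates the zero root gap $\delta_r=v^\primal-v^\reg=0$ down the tree using $p_{nm}>0$ and $\delta_m\ge0$. This is heavier in one respect: the principle-of-optimality step you flag as the main obstacle is itself proved by exactly the splice-and-compare argument the paper uses (replace the restriction on $\calT(n)$ by a strictly better subtree solution and contradict global optimality), so the same tool appears one level down in your proof and must be invoked twice, once per recursion. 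But it also buys more: you obtain that the optimal trajectory satisfies the Bellman equations of both the original and the regularized problems, and that the equality $Q_n^\Reg=Q_n$ holds at the realized states including the root, which is stronger than the bare statement of the lemma and is the kind of trajectory-wise information the paper later exploits for cut tightness. Both proofs rest on the same primitives (attainment of minima from Assumption~\ref{assum:MinCondition}, the envelope inequality $Q_n^\Reg\le Q_n$ from Proposition~\ref{prop:InfConvolutionLipschitz}, and $v^\reg=v^\primal$ from Assumption~\ref{assum:ExactPenalty}), and your finiteness remark (no $+\infty$ terms along the optimal path, by properness and nonnegativity of the costs) is indeed what makes the subtraction of the two Bellman identities legitimate.
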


We illustrate the regularization on the examples through Figures~\ref{fig:ConvexNonLipschitz} and \ref{fig:MixedIntegerNonLipschitz}.
\begin{figure}[htbp]
\centering
\begin{subfigure}[b]{0.48\textwidth}
	\includegraphics[width=\textwidth]{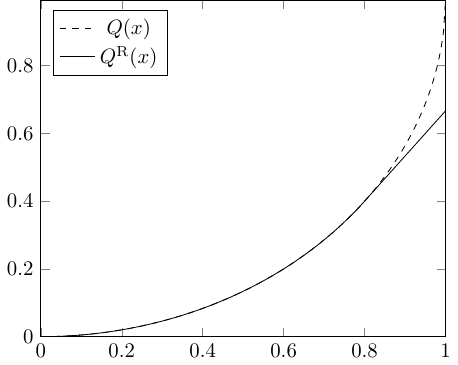}
	\caption{Value Functions in Example~\ref{ex:ConvexNonLipschitz}}
	\label{fig:ConvexNonLipschitz}
\end{subfigure}
\hfill
\begin{subfigure}[b]{0.48\textwidth}
	\includegraphics[width=\textwidth]{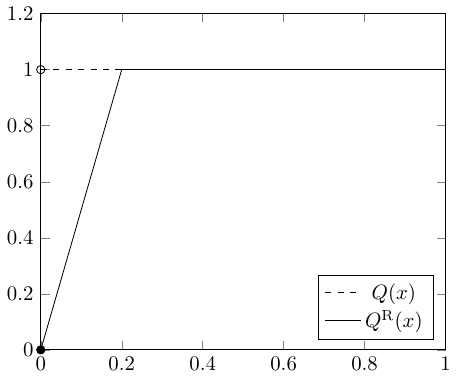}
	\caption{Value Functions in Example~\ref{ex:MixedIntegerNonLipschitz}}
	\label{fig:MixedIntegerNonLipschitz}
\end{subfigure}
\caption{Value functions in Examples~\ref{ex:ConvexNonLipschitz} and \ref{ex:MixedIntegerNonLipschitz}.}
\label{fig:ExamplesConvexNonconvex}
\end{figure}
In Figure~\ref{fig:ConvexNonLipschitz}, the value function $Q(x)$ derived in Example~\ref{ex:ConvexNonLipschitz} is not Lipschitz continuous at $x=1$.
With $\psi(x)=\norm{x}$ and $\sigma=4/3$, we obtain the regularized value function, which coincides with the original one on $[0, 0.8]$ and is Lipschitz continuous on the entire interval $[0,1]$.
In Figure~\ref{fig:MixedIntegerNonLipschitz}, the value function $Q(x)$ derived in Example~\ref{ex:ConvexNonLipschitz} is not continuous at $x=0$. 
With $\psi(x)=\norm{x}$, $\sigma=5$, we obtain the regularized value function, which coincides with the primal one on $\{0\}\cup[0.2,1]$ and is Lipschitz continuous on the entire interval $[0,1]$.
In both examples, it can be easily verified that the penalty reformulation is exact and thus preserves optimal solution.

We comment that Assumption~\ref{assum:ExactPenalty} can hold for appropriately chosen penalty factors in various mixed-integer nonlinear optimization problems, including
\begin{itemize}
	\item convex problems with interior feasible solutions,
	\item problems with finite state spaces,
	\item problems defined by mixed-integer linear functions, and
	\item problems defined by continuously differentiable functions,
\end{itemize}
if certain constraint qualification is satisfied and proper penalty functions are chosen.
We refer the readers to Section~\ref{sec:ExactPenaltyProblemClasses} in the Appendix for detailed discussions.
    We emphasize that Assumption~\ref{assum:ExactPenalty} should be interpreted as a restriction on the MS-MINLP problem class studied in this paper.
    Namely all problem instances in our discussion must satisfy Assumption~\ref{assum:ExactPenalty} with a given set of penalty functions \(\psi_n\) and penalty parameters \(\sigma_n\), while they can have other varying problem data such as the numbers of stages \(T\) and characteristics of the state spaces \(\calX_n\).
    In general, it is possible that a uniform choice of $\sigma_n$ needs to grow with $T$ to satisfy the assumption.

\subsection{Generalized Conjugacy Cuts and Value Function Approximation.}
\label{sec:GeneralizedConjugacy}
In this part, we first introduce generalized conjugacy cuts for nonconvex functions and then apply it to the under-approximation of value functions of MS-MINLP.

\subsubsection{Generalized Conjugacy Cuts.}
Let $Q:\calX\to\R_+\cup\{+\infty\}$ be a proper, l.s.c.\ function defined on a compact set $\calX\subseteq\R^d$. Let $\calU$ be a nonempty set for dual variables.
Given a continuous function $\Phi:\calX\times \calU\to \R$, the $\Phi$-conjugate of $Q$ (see e.g., Chapter 11-L in \cite{rockafellar2009variational}) is defined as 
\begin{equation}
Q^\Phi(u)=\max_{x\in \calX}\left\{\Phi(x,u)-Q(x)\right\}.\label{eq:DefPhiConjugate}
\end{equation}
The following generalized Fenchel-Young inequality holds by definition for any $x\in \calX$ and $u\in \calU$, 
\begin{equation*}
Q(x)+Q^\Phi(u)\ge\Phi(x,u).
\end{equation*}
For any $\hat{u}\in \calU$ and an associated maximizer $\hat{x}$ in \eqref{eq:DefPhiConjugate}, we define
\begin{align}\label{eq:DefGeneralizedCut}
C^{\Phi}(x\given\hat{u},\hat{v}) :=\hat{v}+\Phi(x,\hat{u})
\end{align}
where $\hat{v}\coloneqq-Q^\Phi(\hat{u})$. Then, the following inequality, derived from the generalized Fenchel-Young inequality, is valid for any $x\in\calX$,
\begin{align}\label{eq:GeneralizedConjugacyCutValidness}
Q(x) \ge  C^{\Phi}(x\given\hat{u},\hat{v}),
\end{align}
which we call a \emph{generalized conjugacy cut} for the target function $Q$. 

\subsubsection{Value Function Approximation.} \label{sec:ValueFunctionApprox}
The generalized conjugacy cuts can be used in the setting of an augmented Lagrangian dual~\cite{ahmed2019stochastic} with bounded dual variables.
For a nodal problem $n\in\calN, n\neq r$ and a point $\bar{x}\in\calX_{a(n)}$, define $\Phi_n^{\bar{x}}(x,u)\coloneqq-\innerprod{\lambda}{\bar{x}-x}-\rho\psi_n(\bar{x}-x)$, where $u:=(\lambda,\rho)\in\R^{d_n+1}$ are parameters.
Consider a compact set of parameters $\calU_n=\{(\lambda,\rho):\norm{\lambda}_*\le l_{n,\lambda},0\le\rho\le l_{n,\rho}\}$ with nonnegative bounds $l_{n,\lambda}$ and $l_{n,\rho}$, where $\norm{\cdot}_*$ is the dual norm of $\norm{\cdot}$. Consider the following dual problem 
\begin{align}
    \hat{v}_n:=\max_{(\lambda,\rho)\in\calU_n}\biggl\{\min_{z\in\calX_{a(n)}}\big[Q_n(z)+\innerprod{\lambda}{\bar{x}-z}+\rho\psi_n(\bar{x}-z)\big]\biggr\}.\label{eq:CutGenerationDualProblem}
\end{align}
Denote $\hat{z}_n$ and $(\hat{\lambda}_n,\hat{\rho}_n)$ as an optimal primal-dual solution of \eqref{eq:CutGenerationDualProblem}. 
The dual problem \eqref{eq:CutGenerationDualProblem} can be viewed as choosing $(\hat{\lambda}_n,\hat{\rho}_n)$ as the value of $\hat{u}$ in \eqref{eq:DefGeneralizedCut}, which makes the constant term $-Q^{\Phi}(\hat{u})$ as large as possible, thus makes the generalized conjugacy cut \eqref{eq:GeneralizedConjugacyCutValidness} as tight as possible. 
With this choice of the parameters, a generalized conjugacy cut for $Q_n$ at $\bar{x}$ is given by 
\begin{align}\label{eq:CutGeneration}
    Q_n(x) & \ge C_n^{\Phi_n^{\bar{x}}}(x\given\hat{\lambda}_n,\hat\rho_n,\hat{v}_n) \\ & = -\langle{\hat\lambda_n},{\bar{x}-x}\rangle-\hat\rho_n\psi_n(\bar{x}-x)+\hat{v}_n,\quad\forall\,x\in\calX_{a(n)}.\notag
\end{align}\vspace{-8mm}

\begin{proposition}\label{prop:GeneralizedCutTightness}
Given the above definition of \eqref{eq:CutGenerationDualProblem}-\eqref{eq:CutGeneration}, if $(\bar{x}_n,\bar{y}_n)_{n\in\calN}$ is an optimal solution to problem~\eqref{eq:ExtensiveForm} and the bound $l_{n,\rho}$ satisfies $l_{n,\rho}\ge\sigma_n$ for all nodes $n$, then for every node $n$, the generalized conjugacy cut \eqref{eq:CutGeneration} is tight at $\bar{x}_n$, i.e. 
$Q_n(\bar{x}_n) = C_n^{\Phi_n^{\bar{x}_n}}(\bar{x}_n\given\hat{\lambda}_n,\hat\rho_n,\hat{v}_n)$.
\end{proposition}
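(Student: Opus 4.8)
The plan is to reduce tightness to the single identity $\hat v_n = Q_n(\bar x_n)$, where $\bar x_n$ is the optimal incoming state at which the cut for $Q_n$ is generated. First I would evaluate the cut \eqref{eq:CutGeneration} at $x=\bar x_n$: since $\bar x_n-\bar x_n=0$ and $\psi_n(0)=0$ by the definition of a penalty function, both the linear term $\innerprod{\hat\lambda_n}{\bar x_n-x}$ and the penalty term $\hat\rho_n\psi_n(\bar x_n-x)$ vanish, so $C_n^{\Phi_n^{\bar x_n}}(\bar x_n\given\hat\lambda_n,\hat\rho_n,\hat v_n)=\hat v_n$. Hence the cut is tight at $\bar x_n$ if and only if $\hat v_n=Q_n(\bar x_n)$, and this reduction uses nothing about the optimizers $(\hat\lambda_n,\hat\rho_n)$ beyond the intercept $\hat v_n$.

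The easy half $\hat v_n\le Q_n(\bar x_n)$ is just validity of the cut: substituting $z=\bar x_n$ into the inner minimization of \eqref{eq:CutGenerationDualProblem} bounds it above by $Q_n(\bar x_n)+\innerprod{\lambda}{0}+\rho\psi_n(0)=Q_n(\bar x_n)$ for every $(\lambda,\rho)\in\calU_n$, whence $\hat v_n\le Q_n(\bar x_n)$.

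The crux is the reverse inequality $\hat v_n\ge Q_n(\bar x_n)$, which I would obtain by exhibiting a single feasible dual point that already attains $Q_n(\bar x_n)$, namely $(\lambda,\rho)=(0,\sigma_n)$. This point lies in $\calU_n$ because $\norm{0}_*=0\le l_{n,\lambda}$ and $0<\sigma_n\le l_{n,\rho}$, using the hypothesis $l_{n,\rho}\ge\sigma_n$. For this choice the inner minimization becomes the inf-convolution $m_n:=\min_{z\in\calX_{a(n)}}\bigl[Q_n(z)+\sigma_n\psi_n(\bar x_n-z)\bigr]$, and I would sandwich it between the regularized and original value functions. Plugging $z=\bar x_n$ gives $m_n\le Q_n(\bar x_n)$. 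For the lower bound, regroup the joint minimization in \eqref{eq:RegularizedNodalProblem} so that the minimization over $z_n$ is outermost, writing $Q_n^\Reg(\bar x_n)=\min_{z}\bigl[h_n(z)+\sigma_n\psi_n(\bar x_n-z)\bigr]$ with $h_n(z):=\min_{(x_n,y_n)\in\calF_n}\{f_n(z,y_n,x_n)+\calQ_n^\Reg(x_n)\}$. Since $Q_m^\Reg\le Q_m$ for all children $m$ by Proposition~\ref{prop:InfConvolutionLipschitz}, we get $\calQ_n^\Reg\le\calQ_n$ and hence $h_n\le Q_n$ pointwise, so $Q_n^\Reg(\bar x_n)\le m_n$. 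Finally Lemma~\ref{lemma:RegOptimalValueFunction} (in force under the standing Assumption~\ref{assum:ExactPenalty}) gives $Q_n^\Reg(\bar x_n)=Q_n(\bar x_n)$ at the optimal incoming state, collapsing the chain $Q_n(\bar x_n)=Q_n^\Reg(\bar x_n)\le m_n\le Q_n(\bar x_n)$ and forcing $m_n=Q_n(\bar x_n)$. Therefore $\hat v_n\ge m_n=Q_n(\bar x_n)$, and together with the easy half we conclude $\hat v_n=Q_n(\bar x_n)$, which is the desired tightness.

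I expect the main obstacle to be the lower-bound half of the sandwich, precisely because the inf-convolution $m_n$ in the dual is built from the true value function $Q_n$ (hence from the true cost-to-go $\calQ_n$), whereas the regularized value function $Q_n^\Reg$ is built from the regularized cost-to-go $\calQ_n^\Reg$; the two are reconciled only through the monotonicity $Q_m^\Reg\le Q_m$ of Proposition~\ref{prop:InfConvolutionLipschitz} combined with the exactness identity of Lemma~\ref{lemma:RegOptimalValueFunction}. A minor technical point is the existence of the primal--dual optimizer $(\hat z_n,\hat\lambda_n,\hat\rho_n)$ needed to define the cut, which follows from compactness of $\calU_n$ and $\calX_{a(n)}$ together with the lower semicontinuity of $Q_n$ from Proposition~\ref{prop:OptimalValueFunctionProperties}.
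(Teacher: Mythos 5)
Your proof is correct and follows essentially the same route as the paper's: evaluate the cut at $\bar{x}_n$ to reduce tightness to $\hat{v}_n=Q_n(\bar{x}_n)$, use the dual feasible point $(0,\sigma_n)\in\calU_n$ (feasible precisely because $l_{n,\rho}\ge\sigma_n$) to lower-bound $\hat{v}_n$ by the inf-convolution $m_n=\min_{z\in\calX_{a(n)}}\{Q_n(z)+\sigma_n\psi_n(\bar{x}_n-z)\}$, and invoke Lemma~\ref{lemma:RegOptimalValueFunction} at the optimal state. If anything, your sandwich $Q_n^\Reg(\bar{x}_n)\le m_n\le Q_n(\bar{x}_n)$ is slightly more careful than the paper, which compresses this step into a single asserted equality between $m_n$ and $Q_n^\Reg(\bar{x}_n)$ even though $Q_n^\Reg$ is built from the \emph{regularized} children's value functions and in general only satisfies $Q_n^\Reg\le Q_n\square(\sigma_n\psi_n)$.
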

The proof is given in Section~\ref{app:proof:prop:GeneralizedCutTightness}.
The proposition guarantees that, under Assumption~\ref{assum:ExactPenalty}, the generalized conjugacy cuts are able to approximate the value functions exactly at any state associated to an optimal solution.

In the special case where problem~\eqref{eq:NodalProblem} is convex and $\psi_n(x)=\norm{x}$ for all $n\in\calN$, the exactness of the generalized conjugacy cut holds even if we set 
$l_{n,\rho}=0$, i.e. the conjugacy cut is linear.
To be precise, we begin with the following lemma.
\begin{lemma}
	\label{lemma:ConvexCutTightness}
	Let $\calX\subset\R^d$ be a convex, compact set.
	Given a convex, proper, l.s.c.\ function $Q:\calX\to\R\cup\{+\infty\}$,  for any $x\in\calX$, the inf-convolution satisfies
	\begin{equation}
	    Q\square(\sigma\norm{\cdot})(x)\coloneqq\min_{z\in\calX}\{Q(z)+\sigma\norm{x-z}\}=\max_{\norm{\lambda}_*\le\sigma}\min_{z\in\calX}\{Q(z)+\innerprod{\lambda}{x-z}\}.\label{eq:ConvexCutInfConv}
	\end{equation}	
\end{lemma}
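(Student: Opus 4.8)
The plan is to read both sides as the two orderings of one saddle problem and to close the gap with a minimax theorem. First I would invoke the dual-norm identity $\sigma\norm{w}=\max_{\norm{\lambda}_*\le\sigma}\innerprod{\lambda}{w}$, which holds because $\norm{\cdot}_*$ is the dual norm of $\norm{\cdot}$. Applying it with $w=x-z$ and substituting into the inf-convolution rewrites the left-hand side as $\min_{z\in\calX}\max_{\norm{\lambda}_*\le\sigma}g(z,\lambda)$, where $g(z,\lambda):=Q(z)+\innerprod{\lambda}{x-z}$; the right-hand side is exactly $\max_{\norm{\lambda}_*\le\sigma}\min_{z\in\calX}g(z,\lambda)$. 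Thus the lemma asserts that the minimax of $g$ equals its maximin.

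The inequality in the direction maximin $\le$ minimax is automatic by weak duality, so the content is the reverse inequality, i.e.\ the absence of a duality gap. For this I would apply Sion's minimax theorem to $g$ on the product set. Both feasible sets are convex and compact --- $\calX$ by hypothesis, and the dual ball $\{\lambda:\norm{\lambda}_*\le\sigma\}$ trivially; for each fixed $\lambda$ the map $z\mapsto g(z,\lambda)$ is convex and l.s.c.\ (since $Q$ is convex, proper, and l.s.c.\ and the remaining term is affine), while for each fixed $z$ the map $\lambda\mapsto g(z,\lambda)$ is affine, hence concave and continuous. These are precisely the hypotheses required, and compactness together with lower semicontinuity also guarantees that the outer extrema are attained, legitimizing the use of $\min$ and $\max$ rather than $\inf$ and $\sup$.

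The main obstacle I anticipate is that $Q$ may take the value $+\infty$, so $g$ is extended-real-valued and the most classical form of Sion's theorem (for finite-valued functions) does not apply verbatim. I would address this by passing to $\calX\cap\dom Q$, which is nonempty as $Q$ is proper; since the inner objective $Q(z)+\sigma\norm{x-z}$ is l.s.c.\ and proper on the compact set $\calX$, its minimizer $z^*$ has finite objective and hence lies in $\dom Q$, so the value of the inner $\min$ is unchanged and the convexity and lower semicontinuity of $Q$ persist on the restriction. As a fully self-contained alternative that sidesteps extended values altogether, I would extend $Q$ to $\R^d$ by $+\infty$ off $\calX$ and argue by conjugacy: the inf-convolution $Q\square(\sigma\norm{\cdot})$ is then convex, proper, and l.s.c., so Fenchel--Moreau yields $Q\square(\sigma\norm{\cdot})=(Q\square(\sigma\norm{\cdot}))^{**}$, and combining the standard rule $(Q\square(\sigma\norm{\cdot}))^{*}=Q^{*}+\delta_{\{\norm{\cdot}_*\le\sigma\}}$ with the definition of $Q^{*}$ reproduces the right-hand side term by term. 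I would present the minimax argument as the primary route for its transparency and keep the conjugacy computation as a rigorous backstop.
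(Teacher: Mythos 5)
Your proof is correct and follows essentially the same route as the paper's: both recognize the identity as a min--max exchange for the convex--concave function $Q(z)+\innerprod{\lambda}{x-z}$ over the compact convex sets $\calX$ and $\{\lambda:\norm{\lambda}_*\le\sigma\}$, justified by a minimax/strong-duality theorem (you invoke Sion's theorem, the paper cites Nesterov's Theorem 3.1.30) together with the dual-norm identity $\sigma\norm{w}=\max_{\norm{\lambda}_*\le\sigma}\innerprod{\lambda}{w}$. Your explicit treatment of the extended values of $Q$ by restricting to $\calX\cap\dom Q$ is a careful refinement of a point the paper's proof leaves implicit.
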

The proof are given in Section~\ref{app:proof:lemma:ConvexCutTightness}.
Next we show the tightness in the convex case similar to Proposition~\ref{prop:GeneralizedCutTightness}. The proof is given in Section~\ref{app:proof:prop:ConvexCutTightness}.
\begin{proposition}
    \label{prop:ConvexCutTightness}
    Suppose \eqref{eq:NodalProblem} is convex and $\psi_n(x)=\norm{x}$ for all nodes $n$.
    Given the above definition of \eqref{eq:CutGenerationDualProblem}-\eqref{eq:CutGeneration}, if $(\bar{x}_n,\bar{y}_n)_{n\in\calN}$ is an optimal solution to problem~\eqref{eq:ExtensiveForm} and the bounds satisfy $l_{n,\lambda}\ge\sigma_n$, $l_{n,\rho}=0$ for all nodes $n$, then for every node $n$, the generalized conjugacy cut \eqref{eq:CutGeneration} is exact at $\bar{x}_n$, i.e. 
    $Q_n(\bar{x}_n) = C_n^{\Phi_n^{\bar{x}_n}}(\bar{x}_n\given\hat{\lambda}_n,\hat\rho_n,\hat{v}_n)$.
\end{proposition}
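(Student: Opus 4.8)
The plan is to collapse the convex case onto the linear inf-convolution identity of Lemma~\ref{lemma:ConvexCutTightness} and then certify tightness by exhibiting a subgradient of controlled norm supplied by the regularized value function. First I would note that setting $l_{n,\rho}=0$ forces $\hat\rho_n=0$ in the dual problem~\eqref{eq:CutGenerationDualProblem}, so the generalized conjugacy cut~\eqref{eq:CutGeneration} degenerates to the affine function $x\mapsto -\innerprod{\hat\lambda_n}{\bar x_n-x}+\hat v_n$. Evaluating it at $x=\bar x_n$ and using $\psi_n(0)=0$ gives $C_n^{\Phi_n^{\bar x_n}}(\bar x_n\given\hat\lambda_n,\hat\rho_n,\hat v_n)=\hat v_n$, so the claimed exactness is equivalent to the single scalar identity $\hat v_n=Q_n(\bar x_n)$.

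Next, since the problem is convex, Proposition~\ref{prop:OptimalValueFunctionProperties} makes $Q_n$ convex, proper and l.s.c.\ on the convex compact set $\calX_{a(n)}$, so Lemma~\ref{lemma:ConvexCutTightness} applies with $\sigma=l_{n,\lambda}$. Because $\rho=0$, the inner problem is a pure linear penalization, and the lemma rewrites the dual as an inf-convolution:
\[
\hat v_n=\max_{\norm{\lambda}_*\le l_{n,\lambda}}\min_{z\in\calX_{a(n)}}\{Q_n(z)+\innerprod{\lambda}{\bar x_n-z}\}=\min_{z\in\calX_{a(n)}}\{Q_n(z)+l_{n,\lambda}\norm{\bar x_n-z}\}.
\]
The bound $\hat v_n\le Q_n(\bar x_n)$ is immediate by taking $z=\bar x_n$, so the whole statement reduces to showing the inf-convolution does not fall below $Q_n(\bar x_n)$. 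The way I would do this is to produce a vector $g$ with $\norm{g}_*\le\sigma_n\le l_{n,\lambda}$ that subgradients $Q_n$ at $\bar x_n$ relative to $\calX_{a(n)}$, i.e.\ $Q_n(z)\ge Q_n(\bar x_n)+\innerprod{g}{z-\bar x_n}$ for all $z\in\calX_{a(n)}$. Combined with Hölder's inequality this gives $Q_n(z)+l_{n,\lambda}\norm{\bar x_n-z}\ge Q_n(\bar x_n)+(l_{n,\lambda}-\norm{g}_*)\norm{\bar x_n-z}\ge Q_n(\bar x_n)$, and minimizing over $z$ yields $\hat v_n\ge Q_n(\bar x_n)$, hence equality.

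To obtain such a $g$ I would use the regularized value function as a touching minorant: by Proposition~\ref{prop:InfConvolutionLipschitz}, $Q_n^\Reg$ is convex, $\sigma_n$-Lipschitz, and satisfies $Q_n^\Reg\le Q_n$ on $\calX_{a(n)}$, while Lemma~\ref{lemma:RegOptimalValueFunction} gives $Q_n^\Reg(\bar x_n)=Q_n(\bar x_n)$ at the optimal state. Any subgradient $g$ of $Q_n^\Reg$ at $\bar x_n$ then satisfies $Q_n(z)\ge Q_n^\Reg(z)\ge Q_n^\Reg(\bar x_n)+\innerprod{g}{z-\bar x_n}=Q_n(\bar x_n)+\innerprod{g}{z-\bar x_n}$, so it is automatically a subgradient of $Q_n$, and $\sigma_n$-Lipschitzness bounds $\norm{g}_*\le\sigma_n$. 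The hard part, and the step I expect to need the most care, is guaranteeing such a bounded subgradient exists when $\bar x_n$ lies on the boundary of $\calX_{a(n)}$, where the subdifferential of a function defined only on $\calX_{a(n)}$ can be empty or unbounded. I would circumvent this by passing to the globally $\sigma_n$-Lipschitz convex extension $\hat Q(x):=\min_{z\in\calX_{a(n)}}\{Q_n^\Reg(z)+\sigma_n\norm{x-z}\}$ on $\R^{d_n}$, which coincides with $Q_n^\Reg$ on $\calX_{a(n)}$ precisely because $Q_n^\Reg$ is already $\sigma_n$-Lipschitz there; a globally convex $\sigma_n$-Lipschitz function has a nonempty subdifferential at every point with every element of dual norm at most $\sigma_n$, furnishing the desired $g$ and completing the argument.
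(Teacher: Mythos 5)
Your proof is correct, and it shares the paper's three key ingredients (Lemma~\ref{lemma:ConvexCutTightness}, Proposition~\ref{prop:InfConvolutionLipschitz}, and Lemma~\ref{lemma:RegOptimalValueFunction}), but it proves the central lower bound $\hat v_n\ge Q_n(\bar x_n)$ by a genuinely different mechanism. The paper never touches subgradients: it keeps $\hat v_n$ as a max--min, shrinks the dual ball from $l_{n,\lambda}$ to $\sigma_n$, identifies the resulting max--min with the inf-convolution $Q_n\square(\sigma_n\nVert{\cdot})(\bar x_n)$ via Lemma~\ref{lemma:ConvexCutTightness}, and then uses the definitional inequality $Q_n^\Reg\le Q_n\square(\sigma_n\psi_n)$ followed by Lemma~\ref{lemma:RegOptimalValueFunction} to conclude $\hat v_n\ge Q_n^\Reg(\bar x_n)=Q_n(\bar x_n)$. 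You instead apply Lemma~\ref{lemma:ConvexCutTightness} at level $l_{n,\lambda}$ and certify the inf-convolution bound by exhibiting an explicit subgradient $g$ of dual norm at most $\sigma_n$, extracted from $Q_n^\Reg$ as a $\sigma_n$-Lipschitz convex minorant touching $Q_n$ at $\bar x_n$; this forces you to add the Lipschitz-extension argument to guarantee a bounded subgradient exists at boundary points of $\calX_{a(n)}$, which is the most delicate step of your write-up and is entirely absent from the paper's proof. What each buys: the paper's route is shorter and sidesteps all subdifferential existence issues; yours makes the geometric content explicit (the touching minorant of bounded slope is, in effect, the dual multiplier certifying tightness), which is closer in spirit to how Lagrangian-cut tightness is usually explained. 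Note also that your extension step can be bypassed with a one-line inf-convolution comparison: since $Q_n\ge Q_n^\Reg$ and $Q_n^\Reg$ is $\sigma_n$-Lipschitz, one has
\begin{equation*}
Q_n\square\bigl(l_{n,\lambda}\nVert{\cdot}\bigr)(\bar x_n)\;\ge\; Q_n^\Reg\square\bigl(\sigma_n\nVert{\cdot}\bigr)(\bar x_n)\;=\;Q_n^\Reg(\bar x_n)\;=\;Q_n(\bar x_n),
\end{equation*}
which collapses your argument back to essentially the paper's.
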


In this case, the generalized conjugacy reduces to the usual conjugacy for convex functions and the generalized conjugacy cut is indeed linear.
This enables approximation of the value function that preserves convexity.
\begin{remark}
    This proposition can be generalized to special nonconvex problems with $Q$ extensible to a Lipschitz continuous convex function defined on the convex hull $\conv\calX$.
    This is true if $\calX$ is the finite set of extreme points of a polytope, e.g., $\{0,1\}^d$.
    The above discussion provides an alternative explanation of the exactness of the Lagrangian cuts in SDDiP \cite{zou_stochastic_2018} assuming relatively complete recourse.
\end{remark}

\section{Nested Decomposition and Dual Dynamic Programming Algorithms}
\label{sec:DDPAlgorithms}

In this section, we introduce a nested decomposition algorithm for general scenario trees, and two dual dynamic programming algorithms for stagewise independent scenario trees.
Since the size of the scenario tree could be large, we focus our attention to finding an $\epsilon$-optimal root node solution $x_r^*$ (see definition \eqref{eq:DefinitionRootNodeProblem},) rather than an optimal solution $\{x^*_n\}_{n\in\calT}$ for the entire tree. 

\subsection{Subproblem Oracles.}

Before we propose the new algorithms, we first define subproblem oracles, which we will use to describe the algorithms and conduct complexity analysis.
A subproblem oracle is an oracle that takes subproblem information together with the current algorithm information to produce a solution to the subproblem.
With subproblem oracles, we can describe the algorithms consistently regardless of convexity.

We assume three subproblem oracles in this paper, corresponding to the forward steps and backward steps of non-root nodes, and the root node step in the algorithms.
For non-root nodes, we assume the following two subproblem oracles.
\begin{definition}[Forward Step Subproblem Oracle for Non-Root Nodes]
    \label{def:NonrootForwardStepOracle}
	Consider the following subproblem for a non-root node $n$, 
	\begin{equation}\label{eq:NonRootForwardOracleProblem}
		\min_{\substack{(x,y)\in\calF_n,\\z\in\calX_{a(n)}}}\left\{f_n(z,y,x)+\sigma_n\psi_n(x_{a(n)}-z)+\Theta_n(x)\right\}.\tag{F}
	\end{equation}
	where the parent node's state variable $x_{a(n)}\in\calX_{a(n)}$ is a given parameter and $\Theta_n:\calX_n\to\bar{\R}$ is a l.s.c.\ function, representing an under-approximation of the expected cost-to-go function $\calQ_n(x)$ defined in \eqref{eq:DefinitionCostToGo}. 
	The forward step subproblem oracle finds an optimal solution of \eqref{eq:NonRootForwardOracleProblem} given $x_{a(n)}$ and $\Theta_n$, that is, we denote this oracle as a mapping $\scrO^\Fwd_n$ that takes $(x_{a(n)},\Theta_n)$ as input and outputs an optimal solution $(x_n,y_n,z_n)$ of \eqref{eq:NonRootForwardOracleProblem} for $n\neq r$.
\end{definition}
Recall that the values \(\sigma_n\) for all \(n\in\calN\) in~\eqref{eq:NonRootForwardOracleProblem} are the chosen penalty parameters that satisfy Assumption~\ref{assum:ExactPenalty}.
In view of Propositions~\ref{prop:GeneralizedCutTightness} and~\ref{prop:ConvexCutTightness}, we set \(l_{n,\lambda}\ge\sigma_n\) and \(l_{n,\rho}=0\) for the convex case with \(\psi_n=\norm{\cdot}\); or \(l_{n,\rho}\ge\sigma_n\) otherwise for the dual variable set \(\calU_n:=\{(\lambda,\rho):\norm{\lambda}_*\le l_{n,\lambda},0\le\rho\le l_{n,\rho}\}\) in the next definition.
\begin{definition}[Backward Step Subproblem Oracles for Non-Root Nodes]
    \label{def:NonrootBackwardStepOracle}
	Consider the following subproblem for a non-root node $n$,
	\begin{equation}\label{eq:NonRootBackwardOracleProblem}
		\max_{(\lambda,\rho)\in\calU_n}\min_{\substack{(x,y)\in\calF_n,\\z\in\calX_{a(n)}}}\left\{f_n(z,y,x)+\bangle{\lambda}{x_{a(n)}-z}+\rho\psi_n(x_{a(n)}-z)+\Theta_n(x)\right\},\tag{B}
	\end{equation}
	where the parent node's state variable $x_{a(n)}\in\calX_{a(n)}$ is a given parameter and $\Theta_n:\calX_n\to\bar{\R}$ is a l.s.c.\ function, representing an under-approximation of the expected cost-to-go function. 
	The backward step subproblem oracle finds an optimal solution of \eqref{eq:NonRootBackwardOracleProblem} for the given $x_{a(n)}$ and $\Theta_n$. 
	Similarly, we denote this oracle as a mapping $\scrO^\Bwd_n$ that takes $(x_{a(n)},\Theta_n)$ as input and outputs an optimal solution $(x_n,y_n,z_n;\lambda_n,\rho_n)$ of \eqref{eq:NonRootBackwardOracleProblem} for $n\neq r$.
\end{definition}
For the root node, we assume the following subproblem oracle.
\begin{definition}[Subproblem Oracle for the Root Node]
    \label{def:RootNodeOracle}
	Consider the following subproblem for the root node $r\in\calN$, 
	\begin{equation}\label{eq:RootOracleProblem}
		\min_{(x,y)\in\calF_r}\left\{f_r(x_{a(r)},y,x)+\Theta_r(x)\right\},\tag{R}
	\end{equation}
	where $\Theta_r:\calX_r\to\bar{\R}$ is a l.s.c.\ function, representing an under-approximation of the expected cost-to-go function. 
	The subproblem oracle for the root node is denoted as $\scrO_r$ that takes $\Theta_r$ as input and outputs an optimal solution $(x_r,y_r)$ of \eqref{eq:RootOracleProblem} for the given function $\Theta_r$. 
\end{definition}
These subproblem oracles \(\scrO^\Fwd_n\), \(\scrO^\Bwd_n\), including the parameters \(\sigma_n\), \(l_{n,\lambda}\), and \(l_{n,\rho}\) for all \(n\neq r\), and \(\scrO_r\) will be given as inputs to the algorithms. 
They may return any optimal solution to the corresponding nodal subproblem. 
For numerical implementation, they are usually handled by subroutines or external solvers.

\subsection{Under- and Over-Approximations of Cost-to-go Functions.}
We first show how to iteratively construct under-approximation of expected cost-to-go functions using the generalized conjugacy cuts developed in Section~\ref{sec:GeneralizedConjugacy}.
The under-approximation serves as a surrogate of the true cost-to-go function in the algorithm.
Let $i\in\N$ be the iteration index of an algorithm.
Assume $(x_n^i,y_n^i)_{n\in\calN}$ are feasible solutions to the regularized nodal problem~\eqref{eq:RegularizedNodalProblem} in the $i$-th iteration.
Then the under-approximation of the expected cost-to-go function is defined recursively
from leaf nodes to the root node,
and inductively for $i\in\N$ as
\begin{equation}\label{eq:UnderApproximationCostToGo}
	\ulcQ_n^i(x)\coloneqq\max\left\{\ulcQ_n^{i-1}(x),\;\sum_{m\in\calC(n)}p_{nm}C_m^i(x\given\hat\lambda_m^i,\hat\rho_m^i,\ubar{v}_m^i)\right\},\quad \forall x\in\calX_{n},
\end{equation}
where $\ulcQ_n^0\equiv 0$ on $\calX_{n}$.
In the definition~\eqref{eq:UnderApproximationCostToGo}, $C_m^i$ is the generalized conjugacy cut for $Q_m$ at $i$-th iteration and $\Phi_m^{x_{n}^i}(x,\lambda,\rho)=-\bangle{\lambda}{x_n^i - x}-\rho\psi_n(x_{n}^i-x)$ (cf. \eqref{eq:CutGenerationDualProblem}-\eqref{eq:CutGeneration}), that is, 
\begin{equation}\label{eq:UnderApproximationCut}
	C_m^i(x\given\hat\lambda_m^i,\hat\rho_m^i,\ubar{v}_m^i)\coloneqq-\bangle{\hat\lambda_m^i}{x_{n}^i-x}-\hat\rho_m^i\psi_m(x_{n}^i-x)+\ubar{v}_m^i,
\end{equation}
where $(\hat{x}_m^i,\hat{y}_m^i\hat{z}_m^i;\hat\lambda_m^i,\hat\rho_m^i)=\scrO^\Bwd_m(x_{n}^i,\ulcQ_m^i)$, and $\ubar{v}_m^i$ satisfies
\begin{align}
	&\ubar{v}_m^i=f_m(\hat{z}_m^i,\hat{y}_m^i,\hat{x}_m^i)+\bangle{\hat\lambda_m^i}{x_{n}^i-\hat{z}_m^i}+\hat\rho_m^i\psi_m(x_{n}^i-\hat{z}_m^i)+\ulcQ_m^i(\hat{x}_m^i).\label{eq:UnderApproximationUpdateFormula}
\end{align}
The next proposition shows that $\ulcQ_n^i$ is indeed an under-approximation of $\calQ_n$, the proof of which is given in Section~\ref{app:proof:prop:UnderApproximationValidness}.
\begin{proposition}\label{prop:UnderApproximationValidness}
	For any $n\in\calN$, and $i\in\N$, $\ulcQ_n^i(x)$ is $(\sum_{m\in\calC(n)}p_{nm}(l_{m,\lambda}+l_{m,\rho}))$-Lipschitz continuous and
	\begin{align*}
	    \calQ_n(x) \ge \ulcQ_n^i(x), \quad\forall x\in\calX_{n}.
	\end{align*}
\end{proposition}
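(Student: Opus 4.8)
The plan is to establish both claims by induction on the iteration index $i$, with a nested induction that runs from the leaf nodes upward to the children of the root within each fixed $i$, exactly mirroring the recursive construction in \eqref{eq:UnderApproximationCostToGo}. The base case $i=0$ is immediate: $\ulcQ_n^0\equiv 0$ is trivially Lipschitz continuous with any nonnegative modulus, and since every $f_m$ is nonnegative by Assumption~\ref{assum:MinCondition} we get $Q_m\ge 0$, hence $\calQ_n=\sum_{m\in\calC(n)}p_{nm}Q_m\ge 0=\ulcQ_n^0$. Throughout I use that $\sum_{m\in\calC(n)}p_{nm}=1$, so weighted sums over children are convex combinations.

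For the Lipschitz estimate I would first bound the modulus of a single cut $C_m^i$ from \eqref{eq:UnderApproximationCut}. As a function of $x$, it is the sum of the affine term $\innerprod{\hat\lambda_m^i}{x}$, whose modulus is $\norm{\hat\lambda_m^i}_*\le l_{m,\lambda}$ because $(\hat\lambda_m^i,\hat\rho_m^i)\in\calU_m$; the term $-\hat\rho_m^i\psi_m(x_n^i-x)$, whose modulus is $\hat\rho_m^i\le l_{m,\rho}$ since $\psi_m$ is $1$-Lipschitz on $\calX_n-\calX_n$ and $0\le\hat\rho_m^i\le l_{m,\rho}$; and a constant. Thus each $C_m^i$ is $(l_{m,\lambda}+l_{m,\rho})$-Lipschitz, the convex combination $\sum_{m\in\calC(n)}p_{nm}C_m^i$ is $\bigl(\sum_{m\in\calC(n)}p_{nm}(l_{m,\lambda}+l_{m,\rho})\bigr)$-Lipschitz, and taking the pointwise maximum with $\ulcQ_n^{i-1}$, which carries the same constant by the induction hypothesis, preserves this modulus.

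For the under-approximation, by the recursion \eqref{eq:UnderApproximationCostToGo} it suffices to show that each cut is valid for the true value function, namely $C_m^i(x)\le Q_m(x)$ for all $x\in\calX_n$; the induction hypothesis $\ulcQ_n^{i-1}\le\calQ_n$ then closes the maximum. The key is to bridge the cut, which is built from the oracle using the \emph{under-approximated} cost-to-go $\ulcQ_m^i$, to the true $Q_m$. Introducing $h_m(\xi):=\min_{(x',y')\in\calF_m}\{f_m(\xi,y',x')+\ulcQ_m^i(x')\}$, the nested induction hypothesis $\ulcQ_m^i\le\calQ_m$ gives $h_m\le Q_m$ pointwise on $\calX_n$. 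Unfolding the backward oracle problem~\eqref{eq:NonRootBackwardOracleProblem} with $\Theta_m=\ulcQ_m^i$ and minimizing first over $(x',y')\in\calF_m$ shows that $\ubar{v}_m^i$ in \eqref{eq:UnderApproximationUpdateFormula} equals $\min_{z\in\calX_n}\{h_m(z)+\innerprod{\hat\lambda_m^i}{x_n^i-z}+\hat\rho_m^i\psi_m(x_n^i-z)\}$. Substituting the feasible choice $z=x$ then yields $\ubar{v}_m^i\le h_m(x)+\innerprod{\hat\lambda_m^i}{x_n^i-x}+\hat\rho_m^i\psi_m(x_n^i-x)$, which rearranges via \eqref{eq:UnderApproximationCut} into $C_m^i(x)\le h_m(x)\le Q_m(x)$; this is precisely the generalized Fenchel--Young inequality \eqref{eq:GeneralizedConjugacyCutValidness} applied to $h_m$.

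I expect the main subtlety to be bookkeeping the nested induction correctly: the cut for a child $m$ uses $\ulcQ_m^i$ at the \emph{same} iteration $i$, so the argument must exploit that within iteration $i$ the approximations are built from the leaves upward, guaranteeing that $\ulcQ_m^i\le\calQ_m$ is already available when the cut at the parent $n$ is validated. The only remaining care is the leaf-node base of the tree recursion, where $\calC(m)=\varnothing$ forces $h_m=Q_m$ because there is no cost-to-go to approximate, so the cut is valid with no gap.
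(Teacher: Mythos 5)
Your proof is correct and follows essentially the same route as the paper's: a double induction over the iteration index and over the tree from the leaves upward, bounding the Lipschitz modulus of each generalized conjugacy cut by $l_{m,\lambda}+l_{m,\rho}$ and establishing cut validity $C_m^i\le Q_m$ from the backward oracle via the generalized Fenchel--Young inequality together with the same-iteration hypothesis $\ulcQ_m^i\le\calQ_m$ at the child nodes. Your auxiliary function $h_m$ merely makes explicit the step the paper compresses into citing \eqref{eq:CutGeneration}, \eqref{eq:UnderApproximationUpdateFormula}, and \eqref{eq:NonRootBackwardOracleProblem}.
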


Now, we propose the following over-approximation of the regularized expected cost-to-go functions, which is used in the sampling and termination of the proposed nested decomposition and dual dynamic programming algorithms.
For $i\in\N$, at root node $r$, let $(x_r^i,y_r^i)=\scrO_r(\ulcQ_r^{i-1})$, and, at each non-root node $n$, let $(x_n^i,y_n^i,z_n^i)=\scrO^\Fwd_n(x_{a(n)}^i,\ulcQ_n^{i-1})$.
Then the over-approximation of the regularized expected cost-to-go function is defined recursively, from leaf nodes to the child nodes of the root node,
and inductively for $i\in\N$ by
\begin{equation}\label{eq:OverApproximationCostToGo}
	\olcQ_n^i(x)\coloneqq\begin{cases}
        \conv\bigg\{\olcQ_n^{i-1}(x),  \displaystyle\sum_{m\in\calC(n)}p_{nm}\left(\bar{v}_m^i+\sigma_m\nVert{x-x_{n}^i}\right)\bigg\},\,\text{if~\eqref{eq:RegularizedNodalProblem} is convex}\\[1mm]
        \min\bigg\{\olcQ_n^{i-1}(x), \displaystyle\sum_{m\in\calC(n)}p_{nm}\left(\bar{v}_m^i+\sigma_m\nVert{x-x_{n}^i}\right)\bigg\},\,\text{otherwise}
	\end{cases}
\end{equation}
where $\olcQ_n^0\equiv+\infty$ for any non-leaf node $n\in\calN$, $\olcQ_n^i\equiv0$ for any iteration $i\in\bbN$ and any leaf node $n$, and $\bar{v}_m^i$ satisfies
\begin{equation}\label{eq:OverApproximationUpdateFormula}
	\bar{v}_m^i=f_m({z}_m^i,{y}_m^i,{x}_m^i)+\sigma_m\psi_m(x_{n}^i-{z}_m^i)+\olcQ_m^{i}({x}_m^i).
\end{equation}
Here, the operation $\conv\{f,g\}$ forms the convex hull of the union of the epigraphs of any continuous functions $f$ and $g$ defined on the space $\bbR^d$. 
More precisely using convex conjugacy, we define
\begin{align}\label{eq:ConvexHullFunctionOperation}
	\conv\{f,g\}(x)&:=\left(\min\{f(x),g(x)\}\right)^{**}\\
	&=\sup_{\lambda\in\bbR^d}\inf_{z\in\bbR^d}\left\{\min\{f(z),g(z)\}+\bangle{\lambda}{x-z}\right\}	.\notag
\end{align}
The key idea behind the upper bound function \eqref{eq:OverApproximationCostToGo} is to exploit the Lipschitz continuity of the regularized value function $Q_m^R(x)$. 
In particular, it would follow from induction that $\bar{v}_m^i$ is an upper bound on $Q_m^R(x_n^i)$, and then, by the $\sigma_m$-Lipschitz continuity of $Q_m^R(x)$, we have $\bar{v}_m^i + \sigma_m\|x-x_n^i\|\ge Q_m^R(x_n^i) + \sigma_m\|x-x_n^i\|\ge Q_m^R(x)$ for all $x\in X_n$. 
The next proposition summarizes this property, with the proof given in Section~\ref{app:proof:prop:OverApproximationValidness}.
\begin{proposition}\label{prop:OverApproximationValidness}
	For any non-root node $n\in\calN$ and $i\ge1$, $\olcQ_n^i(x)$ is $(\sum_{m\in\calC(n)}p_{nm}\sigma_m)$-Lipschitz continuous.
	Moreover, we have $\bar{v}_m^i\ge Q_m^\Reg(x_{n}^i)$ for any node $m\in\calC(n)$ and thus
	\begin{align*}
	    \olcQ_n^i(x)\ge \calQ_n^\Reg(x),\quad\forall\,x\in\calX_{n}. 
	\end{align*}
\end{proposition}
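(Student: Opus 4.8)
The plan is to prove all three assertions simultaneously by a backward induction over the scenario tree (from leaf nodes toward the root), nested inside an induction on the iteration index $i$. The structural reason this works is that the update \eqref{eq:OverApproximationCostToGo} for node $n$ at iteration $i$ only references the children's \emph{current}-iteration data (through $\bar v_m^i$, $m\in\calC(n)$) and the node's \emph{own} previous-iteration function $\olcQ_n^{i-1}$; processing nodes from the leaves up therefore makes every quantity on the right-hand side available with its inductive guarantee already established. The base cases are immediate: for a leaf node $\calC(n)=\varnothing$, so $\calQ_n^\Reg\equiv0=\olcQ_n^i$, and for $i=1$ the term $\olcQ_n^0\equiv+\infty$ simply drops out of both the $\min$ and the $\conv$.

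The central step is the bound $\bar v_m^i\ge Q_m^\Reg(x_n^i)$. Here I would observe that the forward triple $(x_m^i,y_m^i,z_m^i)=\scrO^\Fwd_m(x_n^i,\ulcQ_m^{i-1})$ is in particular a \emph{feasible} point of the regularized nodal problem \eqref{eq:RegularizedNodalProblem} for node $m$ with parent state $x_n^i$, so that by the definition of $Q_m^\Reg(x_n^i)$ as a minimum,
\begin{equation*}
Q_m^\Reg(x_n^i)\le f_m(z_m^i,y_m^i,x_m^i)+\sigma_m\psi_m(x_n^i-z_m^i)+\calQ_m^\Reg(x_m^i).
\end{equation*}
Substituting the inner induction hypothesis $\olcQ_m^i(x_m^i)\ge\calQ_m^\Reg(x_m^i)$ (the over-approximation claim at the already-processed child $m$ of the same iteration) into the definition \eqref{eq:OverApproximationUpdateFormula} of $\bar v_m^i$ then yields $\bar v_m^i\ge Q_m^\Reg(x_n^i)$.

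For the over-approximation inequality at node $n$, I would invoke the $\sigma_m$-Lipschitz continuity of $Q_m^\Reg$ from Proposition~\ref{prop:InfConvolutionLipschitz}: combining it with the bound just proved gives, for every $x\in\calX_n$ and every $m\in\calC(n)$,
\begin{equation*}
\bar v_m^i+\sigma_m\norm{x-x_n^i}\ge Q_m^\Reg(x_n^i)+\sigma_m\norm{x-x_n^i}\ge Q_m^\Reg(x),
\end{equation*}
so the summand $\sum_{m\in\calC(n)}p_{nm}(\bar v_m^i+\sigma_m\norm{x-x_n^i})$ dominates $\calQ_n^\Reg(x)=\sum_{m\in\calC(n)}p_{nm}Q_m^\Reg(x)$. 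In the non-convex ($\min$) case I would combine this with the iteration hypothesis $\olcQ_n^{i-1}\ge\calQ_n^\Reg$ to conclude. In the convex ($\conv$) case I would use that $\calQ_n^\Reg$ is itself convex (again Proposition~\ref{prop:InfConvolutionLipschitz}) and lies below both functions being combined; since $\conv\{f,g\}=(\min\{f,g\})^{**}$ is the largest convex l.s.c.\ minorant of $\min\{f,g\}$, the convex minorant $\calQ_n^\Reg$ stays below it. The Lipschitz claim follows because each cut summand is $(\sum_{m\in\calC(n)}p_{nm}\sigma_m)$-Lipschitz (as $\norm{\cdot}$ is $1$-Lipschitz) and both $\min$ and $\conv$ preserve a common modulus; the latter is seen by passing to conjugates, where forming the convex hull restricts $\dom(\,\cdot\,)^*$ to the dual ball of radius $\sum_{m\in\calC(n)}p_{nm}\sigma_m$.

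I expect the main obstacle to be the two convex-hull assertions in the last step: justifying that $\conv\{f,g\}$ inherits the Lipschitz modulus of $f$ and $g$ (via the conjugate-domain argument) and that a convex minorant of $\min\{f,g\}$ remains a minorant of its convex hull. A secondary point requiring care is reconciling the ambient domain $\R^d$ on which $\conv$ is defined with the compact state space $\calX_n$ on which the Lipschitz and over-approximation properties are asserted, together with the clean handling of the $+\infty$ initialization in the $i=1$ base case.
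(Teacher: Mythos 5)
Your proposal is correct and follows essentially the same route as the paper's proof: the same double induction (on the iteration index $i$, with a leaf-to-root recursion inside each iteration), the same feasibility argument yielding $\bar{v}_m^i\ge Q_m^\Reg(x_n^i)$, the same use of the $\sigma_m$-Lipschitz continuity of $Q_m^\Reg$ to get cut dominance, and the same dual-ball/biconjugate treatment of the convex case. The domain issue you flag is handled in the paper exactly as you anticipate: $\calQ_n^\Reg$ is extended from $\calX_n$ to all of $\R^{d_n}$ via the inf-convolution $\calQ_n^\Reg\square(L_n\norm{\cdot})$, after which the inequality $Q'_n\ge\calQ_n^\Reg$ holds on $\R^{d_n}$ and is preserved under biconjugation since the extension equals its own biconjugate.
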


\subsection{A Nested Decomposition Algorithm for General Trees.}
We first propose a nested decomposition algorithm in Algorithm~\ref{alg:DualDP} for a general scenario tree.
In each iteration $i$, Algorithm~\ref{alg:DualDP} carries out a forward step, a backward step, and a root node update step. 
In the forward step, the algorithm proceeds from $t=1$ to $T$ by solving all the nodal subproblems with the current under-approximation of their cost-to-go functions in stage $t$.
After all the state variables $x_n^i$ are obtained for nodes $n\in\calN$, the backward step goes from $t=T$ back to $1$.
At each node $n$ in stage $t$, it first updates the under-approximation of the expected cost-to-go function.
Next it solves the dual problem to obtain an optimal primal-dual solution pair $(\hat{x}_n^i,\hat{y}_n^i,\hat{z}_n^i;\hat\lambda_n^i,\hat\rho_n^i)$, which is used to construct a generalized conjugacy cut using~\eqref{eq:UnderApproximationCut}, together with values $\ubar{v}_n^i$ and $\bar{v}_n^i$ calculated with~\eqref{eq:UnderApproximationUpdateFormula} and \eqref{eq:OverApproximationUpdateFormula}.
Finally the algorithm updates the root node solution using the updated under-approximation of the cost-to-go function, and determines the new lower and upper bounds.
The incumbent solution $(x_r^*,y_r^*)$ may also be updated as the algorithm output at termination, although it is not used in the later iterations.

\begin{algorithm}[ht]
    \caption{A Nested Decomposition Algorithm for a General Tree}
	\label{alg:DualDP}
	\begin{algorithmic}[1]
		\Require{scenario tree $\calT=(\calN,\calE)$ with subproblem oracles $\scrO_r,\scrO_n^\Fwd,\scrO_n^\Bwd,n\neq r$, and $\epsilon>0$} 
		\Ensure{an $\epsilon$-optimal root node solution $(x_r^*,y_r^*)$ to the regularized problem~\eqref{eq:RegularizedNodalProblem}}
		\State{Initialize: $i\leftarrow1$; $\ulcQ_n^0\leftarrow0,\;\olcQ_n^0\leftarrow+\infty\,\forall\,n:\calC(n)\neq\varnothing$ and $\olcQ_n^0\leftarrow0\,\forall\,n:\calC(n)=\varnothing$}
		\State{Evaluate $(x_r^1,y_r^1)=\scrO_r(0)$}
		\State{Set $\LB\leftarrow f_r(x_{a(r)},y_r^1,x_r^1),\;\UB\leftarrow +\infty$}
		\While{$\UB-\LB>\epsilon$}
		\For{$t=1,\dots,T-1$}
		\Comment{$i$-th forward step}
			\For{$n\in\calN(t)$}
				\State{Evaluate $(x_n^i,y_n^i,z_n^i)=\scrO_n^\Fwd(x_{a(n)}^i,\ulcQ_n^{i-1})$}
			\EndFor
		\EndFor
		\For{$t=T,\dots,1$}
		\Comment{$i$-th backward step}
			\For{$n\in\calN(t)$}
				\State{Update $\ulcQ_n^i$ and $\olcQ_n^i$ using \eqref{eq:UnderApproximationCostToGo} and~\eqref{eq:OverApproximationCostToGo}}
				\State{Evaluate $(\hat{x}_n^i,\hat{y}_n^i,\hat{z}_n^i;\hat\lambda_n^i,\hat\rho_n^i)=\scrO_n^\Bwd(x_{a(n)}^i,\ulcQ_n^i)$}
				\State{Calculate $C_n^i,\ubar{v}_n^i,$ and $\bar{v}_n^i$ using ~\eqref{eq:UnderApproximationCut}, \eqref{eq:UnderApproximationUpdateFormula}, and \eqref{eq:OverApproximationUpdateFormula}}
			\EndFor
		\EndFor
		\State{Update $\ulcQ_r^i$ and $\olcQ_r^i$ using \eqref{eq:UnderApproximationCostToGo} and~\eqref{eq:OverApproximationCostToGo}}
		\Comment{root node update}
		\State{Evaluate $(x_r^{i+1},y_r^{i+1})=\scrO_r(\ulcQ_r^i)$}
		\State{Update $\LB\leftarrow f_r(x_{a(r)},y_r^{i+1},x_r^{i+1})+\ulcQ_r^i(x_r^{i+1})$}
		\If{$\UB> f_r(x_{a(r)},y_r^{i+1},x_r^{i+1})+\olcQ_r^i(x_r^{i+1})$}
		    \State{Update $\UB\leftarrow f_r(x_{a(r)},y_r^{i+1},x_r^{i+1})+\olcQ_r^i(x_r^{i+1})$}
		    \State{Set $(x_r^*,y_r^*)=(x_r^{i+1},y_r^{i+1})$}
		\EndIf
		\State{$i\leftarrow i+1$}
		\EndWhile
	\end{algorithmic}
\end{algorithm}

Algorithm~\ref{alg:DualDP} solves the regularized problem~\eqref{eq:RegularizedNodalProblem} for an $\epsilon$-optimal root node solution. To justify the $\epsilon$-optimality of the output of the algorithm, we have the following proposition, the proof of which is given in Section~\ref{app:proof:prop:RootNodeOptimality}.
\begin{proposition}\label{prop:RootNodeOptimality}
	Given any $\epsilon > 0$, if $\UB-\LB\le \epsilon$, then the returned solution $(x_r^*,y_r^*)$ is an $\epsilon$-optimal root node solution to the regularized problem~\eqref{eq:RegularizedNodalProblem}.
	In particular, if $\olcQ_r^i(x_r^{i+1})-\ulcQ_r^i(x_r^{i+1})\le\epsilon$ for some iteration index $i$, then $\UB-\LB\le\epsilon$ and Algorithm~\ref{alg:DualDP} terminates after the $i$-th iteration.
\end{proposition}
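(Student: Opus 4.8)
The plan is to prove $\epsilon$-optimality by a one-sided sandwich argument: show that $\LB$ never exceeds the regularized optimal value $v^\reg$, and that $\UB$ always dominates the \emph{true} regularized objective evaluated at the returned incumbent $(x_r^*,y_r^*)$. Once both bounds are in place, the hypothesis $\UB-\LB\le\epsilon$ immediately yields
\[
f_r(x_{a(r)},y_r^*,x_r^*)+\calQ_r^\Reg(x_r^*)\le \UB\le\LB+\epsilon\le v^\reg+\epsilon,
\]
which is exactly the definition of an $\epsilon$-optimal root node solution, feasibility $(x_r^*,y_r^*)\in\calF_r$ being guaranteed by the root oracle $\scrO_r$.

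For the lower bound I would first use that the root oracle returns $(x_r^{i+1},y_r^{i+1})=\scrO_r(\ulcQ_r^i)$, so by Definition~\ref{def:RootNodeOracle} the assigned value is $\LB=\min_{(x,y)\in\calF_r}\{f_r(x_{a(r)},y,x)+\ulcQ_r^i(x)\}$. The delicate point, which I expect to be the main obstacle, is that $\LB$ is built from an under-approximation of the \emph{un-regularized} cost-to-go $\calQ_r$, whereas $\epsilon$-optimality is measured against the \emph{regularized} value $v^\reg$; since $\calQ_r^\Reg\le\calQ_r$, the inequality $\ulcQ_r^i\le\calQ_r$ from Proposition~\ref{prop:UnderApproximationValidness} does \emph{not} directly give $\ulcQ_r^i\le\calQ_r^\Reg$. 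I would bridge this gap through the exact-penalty identity rather than a pointwise comparison: by Proposition~\ref{prop:UnderApproximationValidness}, $\ulcQ_r^i(x)\le\calQ_r(x)$ for all $x$, hence
\[
\LB\le\min_{(x,y)\in\calF_r}\{f_r(x_{a(r)},y,x)+\calQ_r(x)\}=v^\primal,
\]
and Assumption~\ref{assum:ExactPenalty} gives $v^\primal=v^\reg$, so that $\LB\le v^\reg$.

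For the upper bound I would note that the incumbent and $\UB$ are updated together: whenever $\UB$ is reduced at some iteration $i^*$, it is set to $f_r(x_{a(r)},y_r^{i^*+1},x_r^{i^*+1})+\olcQ_r^{i^*}(x_r^{i^*+1})$ while $(x_r^*,y_r^*)$ is set to $(x_r^{i^*+1},y_r^{i^*+1})$. Proposition~\ref{prop:OverApproximationValidness} gives $\olcQ_r^{i^*}\ge\calQ_r^\Reg$, whence $f_r(x_{a(r)},y_r^*,x_r^*)+\calQ_r^\Reg(x_r^*)\le \UB$, which closes the sandwich and establishes the first assertion.

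For the ``in particular'' part I would simply track the two quantities updated inside the loop at iteration $i$: the algorithm sets $\LB=f_r(x_{a(r)},y_r^{i+1},x_r^{i+1})+\ulcQ_r^i(x_r^{i+1})$, and after the conditional update $\UB\le f_r(x_{a(r)},y_r^{i+1},x_r^{i+1})+\olcQ_r^i(x_r^{i+1})$. Subtracting, the common $f_r$ term cancels and
\[
\UB-\LB\le\olcQ_r^i(x_r^{i+1})-\ulcQ_r^i(x_r^{i+1})\le\epsilon,
\]
so the while-loop guard $\UB-\LB>\epsilon$ fails and the algorithm halts after the $i$-th iteration. All steps except the lower-bound reconciliation via exact penalty are routine bookkeeping on the loop quantities.
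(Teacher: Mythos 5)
Your proof is correct and follows essentially the same route as the paper's: both establish $\LB\le v^\primal=v^\reg$ via $\ulcQ_r^i\le\calQ_r$, the optimality of the root oracle output, and Assumption~\ref{assum:ExactPenalty}, and both use Proposition~\ref{prop:OverApproximationValidness} to dominate the regularized objective at the incumbent by $\UB$, with the termination claim handled by the same cancellation of the $f_r$ term. The only cosmetic difference is that you anchor the upper bound at the iteration $i^*$ where the incumbent was last updated, whereas the paper uses the monotonicity $\olcQ_r^i\le\olcQ_r^{i^*}$ to state the chain at the current iteration $i$; both are valid.
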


\subsection{A Deterministic Sampling Dual Dynamic Programming Algorithm.}
\label{subsec:StagewiseIndependentProblems}
Starting from this subsection, we turn our attention to stagewise independent stochastic problems, 
which is defined in the following assumption.
\begin{assumption}\label{assum:StagewiseIndependence}
	For any $t=1,\dots,T-1$ and any $n,n'\in\calN(t)$, the state space, the transition probabilities, as well as the data associated with the child nodes $\calC(n)$ and $\calC(n')$ are identical.
	In particular, this implies $\calQ_n(x)=\calQ_{n'}(x)\eqqcolon\calQ_t(x)$ for all $x\in\calX_n=\calX_{n'}\eqqcolon\calX_t\subseteq\R^{d_t}$.
\end{assumption}
We denote $n\sim n'$ for $n,n'\in\calN(t)$ for some $t=1,\dots,T-1$, if the nodes $n,n'$ are defined by identical data. 
We then use $\tcN(t):=\calN(t)/\sim$ to denote the set of nodes with size $N_t:=\vert{\tcN(t)}\vert$ that are defined by distinct data in stage $t$ for all $t=1,\dots,T-1$, i.e. $\tcN:=\cup_{t=0}^{T}\tcN(t)$
forms a recombining scenario tree \cite{zou_multistage_2019}.
For each node $m\in\tcN(t)$, we denote $p_{t-1,m}:=p_{nm}$ for any $n\in\tcN(t-1)$ since $p_{n,m}=p_{n',m}$ for any $n,n'\in\tcN(t-1)$.
Due to stagewise independence, it suffices to keep track of the state of each stage in the algorithm, instead of the state of each node.
To be consistent, we also denote the root node solution as $(x_0^i,y_0^i)$ for $i\in\N$.
We present the algorithm in Algorithm~\ref{alg:DeterministicDualDP}.

Similar to Algorithm~\ref{alg:DualDP}, each iteration in Algorithm~\ref{alg:DeterministicDualDP} consists of a forward step, a backward step, and a root node update step.
In particular, at a node $n\in\tcN(t)$ with $t<T$, the forward step proceeds to a child node $m\in\tcN(t+1)$, where the \textit{approximation gap} $\gamma_m^i:=\olcQ_t^{i-1}(x_m^i)-\ulcQ_t^{i-1}(x_m^i)$
is among the largest of all the approximation gaps of states $x_{m'}^i$ of nodes $m'\in\tcN(t+1)$.
Then the state variable of node $m$ is considered the state variable of stage $t(m)$ in the iteration $i$.
Due to stagewise independence, the backward step at each stage $t$ only need to generate cuts for the nodes in the recombining tree $\tcN$.
The optimality of the returned solution $(x_0^*,y_0^*)$ is guaranteed by Proposition~\ref{prop:RootNodeOptimality}.

\begin{algorithm}[ht]
	\caption{Deterministic Sampling Dual Dynamic Programming Algorithm}
	\label{alg:DeterministicDualDP}
	\begin{algorithmic}[1]
		\Require{recombining scenario tree $\tcN$ with subproblem oracles $\scrO_r,\scrO_n^\Fwd,\scrO_n^\Bwd,n\neq r$, and $\epsilon>0$}
		\Ensure{an $\epsilon$-optimal root node solution $(x_0^*,y_0^*)$ to the regularized problem~\eqref{eq:RegularizedNodalProblem}}
		\State{Initialize: $i\leftarrow1$; $\ulcQ_t^0\leftarrow0,\,\forall\,t,\;\olcQ_t^0,\leftarrow+\infty\,\forall\,t\le T-1$ and $\olcQ_T^0\leftarrow0$}
		\State{Evaluate $(x_0^1,y_0^1)=\scrO_r(0)$}
		\State{Set $\LB\leftarrow f_r(x_{a(r)},y_0^1,x_0^1),\;\UB\leftarrow +\infty$}
		\While{$\UB-\LB>\epsilon$}
		\For{$t=1,\dots,T-1$}
		\Comment{$i$-th forward step}
			\For{$n\in\tcN(t)$}
				\State{Evaluate $(x_n^i,y_n^i,z_n^i)=\scrO_n^\Fwd(x_{t-1}^i,\ulcQ_t^{i-1})$}\label{alg:DeterministicDualDP:ForwardStep}
				\State{Calculate the gap $\gamma_n^i=\olcQ_t^{i-1}(x_{n}^i)-\ulcQ_t^{i-1}(x_{n}^i)$}
			\EndFor
			\State{Select any $n^*(t)\in\{n\in\calN(t):\gamma_n^i\ge\gamma_{n'}^i,\ \forall\,n'\in\tcN(t)\}$, and let $x_t^i\leftarrow x_{n^*(t)}^i$}\label{alg:DeterministicDualDP:MaxGap}
		\EndFor
		\For{$t=T,\dots,1$}
		\Comment{$i$-th backward step}
			\State{Update $\ulcQ_t^i$ and $\olcQ_t^i$ using \eqref{eq:UnderApproximationCostToGo} and \eqref{eq:OverApproximationCostToGo}}
			\For{$n\in\tcN(t)$}
				\State{Evaluate $(\hat{x}_n^i,\hat{y}_n^i,\hat{z}_n^i;\hat\lambda_n^i,\hat\rho_n^i)=\scrO_n^\Bwd(x_{t-1}^i,\ulcQ_t^i)$}
				\State{Calculate $C_n^i,\ubar{v}_n^i,\bar{v}_n^i$ using \eqref{eq:UnderApproximationCut}, \eqref{eq:UnderApproximationUpdateFormula}, and \eqref{eq:OverApproximationUpdateFormula}}\label{alg:DeterministicDualDP:cut}
			\EndFor
		\EndFor
		\State{Update $\ulcQ_0^i$ and $\olcQ_0^i$ using \eqref{eq:UnderApproximationCostToGo} and \eqref{eq:OverApproximationCostToGo}}
		\Comment{root node update}
		\State{Evaluate $(x_0^{i+1},y_0^{i+1})=\scrO_r(\ulcQ_0^i)$}
		\State{Update $\LB\leftarrow f_r(x_{a(r)},y_0^{i+1},x_0^{i+1})+\ulcQ_0^i(x_0^{i+1})$}
		\If{$\UB> f_r(x_{a(r)},y_0^{i+1},x_0^{i+1})+\olcQ_0^i(x_0^{i+1})$} \label{alg:DeterministicDDP:UB1}
		    \State{Update $\UB\leftarrow f_r(x_{a(r)},y_0^{i+1},x_0^{i+1})+\olcQ_0^i(x_0^{i+1})$}\label{alg:DeterministicDDP:UB2}
		    \State{Set $(x_0^*,y_0^*)=(x_0^{i+1},y_0^{i+1})$}
		\EndIf
		\State{$i\leftarrow i+1$}
		\EndWhile
	\end{algorithmic}
\end{algorithm}

\subsection{A Stochastic Sampling Dual Dynamic Programming Algorithm.}
Now we present a stochastic dual dynamic programming algorithm, which uses stochastic sampling rather than deterministic sampling. 
So, instead of traversing the scenario tree and finding a path with the largest approximation gap, the stochastic sampling algorithm generates $M$ scenario paths before an iteration begins for some $M\ge 1$.
To be precise, we introduce the following notations.
Let $\calP=\prod_{t=1}^T\tcN(t)$ denote all possible scenario paths from stage 1 to stage $T$.
A scenario path is denoted as a $T$-element sequence $P=(n_1,\dots,n_T)\in\calP$, where $n_t\in\tcN(t)$ for each $t=1,\dots,T$.
In the $i$-th iteration, we sample $M$ independent scenario paths $\scrP^i=\{P^{i,1},\dots,P^{i,M}\}$, and we use $P^{i,j}_t$ to denote the $t$-th node in the scenario path $P^{i,j}$, i.e., the node in the $t$-th stage of the $j$-th scenario path in the $i$-th iteration, for $1\le j\le M$ and $1\le t\le T$.
Since in each iteration, the solutions and the approximations depend on the scenario path $P^{i,j}$, we use two superscripts $i$ and $j$ for solutions and cuts, where a single superscript $i$ is used in the deterministic sampling algorithm.
In addition, for every node $n\in\tcN(t)$ for some stage $t$, the under-approximation of the expected cost-to-go function is updated over all scenario path index $j=1,\dots,M$, with $M$ cuts in total,
i.e.,
\begin{equation}\label{eq:StochasticSamplingUnderApproximation}
	\ulcQ_t^{i}(x)\coloneqq\max\bigg\{\ulcQ_t^{i-1}(x),\sum_{m\in\tcN(t+1)}p_{tm}C_m^{i,j}(x\given\hat\lambda_m^{i,j},\hat\rho_m^{i,j},\ubar{v}_m^{i,j}),\,1\le j\le M\bigg\},
\end{equation}
where $C_m^{i,j}$ is the generalized conjugacy cut generated with $(\hat{x}_m^{i,j},\hat{y}_m^{i,j},\hat{z}_m^{i,j};\hat\lambda_m^{i,j},\hat\rho_m^{i,j})=\scrO_m^\Bwd(x_{n}^{i,j},\ulcQ_{t+1}^i)$ using formula~\eqref{eq:UnderApproximationCut}.
With these notations, the algorithm is displayed in Algorithm~\ref{alg:StochasticDualDP}.

\begin{algorithm}[ht]
	\caption{Stochastic Sampling Dual Dynamic Programming Algorithm}
	\label{alg:StochasticDualDP}
	\begin{algorithmic}[1]
		\Require{recombining scenario tree $\tcN$ with subproblem oracles $\scrO_r,\scrO_n^\Fwd,\scrO_n^\Bwd,n\neq r$}
		\State{Initialize: $i\leftarrow1$; $\ulcQ_t^0\leftarrow0,\;\forall\,t$}
		\State{Evaluate $(x_0^1,y_0^1)=\scrO_r(0)$}
		\While{some stopping criterion is not satisfied}
		\State{Sample $M$ scenario paths $\scrP^i=\{P^{i,1},\dots,P^{i,M}\}$}
		\For{$j=1,\dots,M$}
		\Comment{$i$-th forward step}
    		\For{$t=1,\dots,T-1$}
				\State{Evaluate $(x_t^{i,j},y_t^{i,j},z_t^{i,j})=\scrO_n^\Fwd(x_{t-1}^{i,j},\ulcQ_t^{i-1})$}
    		\EndFor
    	\EndFor
		\For{$t=T,\dots,1$}
		\Comment{$i$-th backward step}
			\State{Update $\ulcQ_t^i$ using \eqref{eq:StochasticSamplingUnderApproximation}}
		    \For{$j=1,\dots,M$}
    			\For{$n\in\calN(t)/\sim$}
					\State{Evaluate $(\hat{x}_n^{i,j},\hat{y}_n^{i,j},\hat{z}_n^{i,j};\hat\lambda_n^{i,j},\hat\rho_n^{i,j})=\scrO_n^\Bwd(x_{t-1}^{i,j},\ulcQ_t^i)$}
					\State{Calculate $C_n^{i,j}$ and $\ubar{v}_n^{i,j}$ using \eqref{eq:UnderApproximationCut} and \eqref{eq:UnderApproximationUpdateFormula}}
    			\EndFor
    		\EndFor
		\EndFor
		\State{Update $\ulcQ_0^i$ using \eqref{eq:StochasticSamplingUnderApproximation}}
		\Comment{root node update}
		\State{Evaluate $\scrO_r(\ulcQ_0^i)=(x_0^{i+1},y_0^{i+1})$}
		\State{$i\leftarrow i+1$}
		\EndWhile
	\end{algorithmic}
\end{algorithm}

Unlike the preceding two algorithms, Algorithm~\ref{alg:StochasticDualDP} does not need to construct the over-approximation of the regularized value functions for selecting the child node to proceed with.
Instead, it determines the scenario paths before the forward step starts.
In the forward step, each nodal problem in the sampled scenario path is solved.
Then in the backward step, the dual problems are solved at the nodes that are defined by distinct data, dependent on the parent node's state variable obtained in the forward step.
The termination criterion is flexible. 
In the existing literature~\cite{shapiro_analysis_2011,zou_stochastic_2018}, statistical upper bounds based on the sampled scenario paths are often used together with the lower bound for terminating the algorithm.
In particular for the convex problems, if we set $\sigma_n=+\infty$, which implies $l_{n,\lambda}=+\infty$ in the backward step subproblem oracles \(\scrO_n^\Bwd\) for all \(n\in\calN\), then Algorithm~\ref{alg:StochasticDualDP} reduces to the usual SDDP algorithm in the literature~\cite{shapiro_analysis_2011,girardeau_convergence_2015}.

\section{Upper Bounds on Iteration Complexity of Proposed Algorithms.}
\label{sec:ComplexityUpperBounds}
In this section, we derive upper bounds on the iteration complexity of the three proposed algorithms, i.e. the bound on the iteration index when the algorithm terminates.
These upper bounds on the iteration complexity imply convergence of the algorithm to an $\epsilon$-optimal root node solution for any $\epsilon>0$.

\subsection{Upper Bound Analysis on Iteration Complexity of Algorithm~\ref{alg:DualDP}.}
\label{sec:ComplexityUB:DualDP}
In this section, we discuss the iteration complexity of Algorithm~\ref{alg:DualDP}.
We begin with the definition of a set of parameters used in the convergence analysis. 
Let $\epsilon$ denote the desired root-node optimality gap $\epsilon$ in Algorithm~\ref{alg:DualDP}.
Let $\delta=(\delta_n)_{n\in\calN,\calC(n)\neq\varnothing}$ be a set of positive numbers such that $\epsilon=\sum_{n\in\calN,\calC(n)\neq\varnothing}p_n\delta_n$. Since $\epsilon>0$, such $\delta_n$'s clearly exist. Then, we define recursively for each non-leaf node $n$
\begin{equation}\label{eq:ApproximationGapVector}
	\gamma_n(\delta)\coloneqq
	\delta_n+\sum_{m\in\calC(n)}p_{nm}\gamma_m(\delta),
\end{equation}
and $\gamma_n(\delta)=0$ for leaf nodes $n$.
For $i\in\N$, recall the approximation gap $\gamma_n^i=\olcQ_n^{i-1}(x_n^i)-\ulcQ_n^{i-1}(x_n^i)$ for $n\in\calN$.
For leaf nodes, $\gamma_n^i\equiv0$ by definition for all $i\in\N$.
In addition, we define the sets of indices $\calI_n(\delta)$ for each $n\in\calN$ as
\begin{equation}\label{eq:ApproximationIndexSet}
	\calI_n(\delta)\coloneqq\left\{i\in\N:\gamma_n^i>\gamma_n(\delta)\text{ and }\gamma_m^i\le\gamma_m(\delta),\forall\,m\in\calC(n)\right\}.
\end{equation}
Intuitively, the index set $\calI_n(\delta)$ consists of the iteration indices when all the child nodes of $n$ have good approximations of the expected cost-to-go function at the forward step solution, while the node $n$ itself does not.
The next lemma shows that the backward step for node $n$ in the iteration $i\in\calI_n(\delta)$ will reduce the expected cost-to-go function approximation gap at node $n$ to be no more than $\gamma_n(\delta)$.

\begin{lemma}\label{lemma:NeighborhoodApproximation}
    If an iteration index $i\in\calI_n(\delta)$, i.e., $\olcQ_n^{i-1}(x_n^i)-\ulcQ_n^{i-1}(x_n^i)>\gamma_n(\delta)$ and $\olcQ_m^{i-1}(x_m^i)-\ulcQ_m^{i-1}(x_m^i)\le\gamma_m(\delta)$ for all $m\in\calC(n)$, then
    \begin{align}
    \olcQ_n^i(x)-\ulcQ_n^i(x)\le\gamma_n(\delta),\quad\forall\,x\in\calX_n,\,\Vert{x-x_n^i}\Vert\le\frac{\delta_n}{2L_n},\label{eq:lemma:NeighborhoodApproximation}
    \end{align}
    where $L_n\coloneqq\sum_{m\in\calC(n)}p_{nm}(l_{m,\lambda}+l_{m,\rho})$ is determined by the input parameters.
\end{lemma}
The proof is given in Section~\ref{app:proof:lemma:NeighborhoodApproximation}. 
Lemma~\ref{lemma:NeighborhoodApproximation} shows that an iteration being in the index set would imply an improvement of the approximation in a neighborhood of the current state. 
In other words, each $i\in \calI_n$ would carve out a ball of radius $\delta_n/(2L_n)$ in the state space $\calX_n$ such that no point in the ball can be the forward step solution of some iteration $i$ in $\calI_n$. 
This implies that we could bound the cardinality $|\calI_n|$ of $\calI_n$ by the size and shape of the corresponding state space $\calX_n$. 
\begin{lemma}
	\label{lemma:IndexSetCardinality}
	Let $\scrB=\{\calB_{n,k}\subset\R^{d_n}\}_{1\le k\le K_n,n\in\calN}$ be a collection of balls, each with diameter $D_{n,k}\ge0$, such that $\calX_n\subseteq\bigcup_{k=1}^{K_n}\calB_{n,k}$.
	Then,
	\begin{equation*}
	\abs{\calI_n(\delta)}\le\sum_{k=1}^{K_n}\left(1+\frac{2L_nD_{n,k}}{\delta_n}\right)^{d_n}.
	\end{equation*} 
\end{lemma}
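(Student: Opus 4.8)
The plan is to show that the forward-step states $\{x_n^i : i\in\calI_n(\delta)\}$ form a $\tfrac{\delta_n}{2L_n}$-separated set inside $\calX_n$, and then to bound the cardinality of such a separated set by a packing argument against the given ball cover $\scrB$.

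First I would record the monotonicity of the approximations in the iteration index. By construction~\eqref{eq:UnderApproximationCostToGo} the under-approximation is a running maximum, so $\ulcQ_n^i(x)\ge\ulcQ_n^{i-1}(x)$, and by construction~\eqref{eq:OverApproximationCostToGo} the over-approximation is a running minimum (or a convex hull of a minimum), so $\olcQ_n^i(x)\le\olcQ_n^{i-1}(x)$. Hence the pointwise gap $g^i(x):=\olcQ_n^i(x)-\ulcQ_n^i(x)$ is nonincreasing in $i$ for every fixed $x\in\calX_n$. Next I would establish the separation. Fix $i,i'\in\calI_n(\delta)$ with $i<i'$. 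Since $i\in\calI_n(\delta)$, Lemma~\ref{lemma:NeighborhoodApproximation} gives $g^i(x)\le\gamma_n(\delta)$ for every $x\in\calX_n$ with $\Vert x-x_n^i\Vert\le\tfrac{\delta_n}{2L_n}$. Because $i'-1\ge i$, monotonicity yields $g^{i'-1}(x)\le g^i(x)\le\gamma_n(\delta)$ on the same ball. On the other hand, $i'\in\calI_n(\delta)$ forces $\gamma_n^{i'}=g^{i'-1}(x_n^{i'})>\gamma_n(\delta)$. Therefore $x_n^{i'}$ cannot lie in that ball, i.e. $\Vert x_n^{i'}-x_n^i\Vert>\tfrac{\delta_n}{2L_n}$. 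As this holds for every pair, $\{x_n^i\}_{i\in\calI_n(\delta)}$ is strictly $\tfrac{\delta_n}{2L_n}$-separated.

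Finally I would convert separation into the stated count by a volume/packing estimate, handled ball by ball. Assign each $x_n^i$ to one covering ball $\calB_{n,k}$ containing it (possible since $\calX_n\subseteq\bigcup_k\calB_{n,k}$). Writing $r:=\tfrac{\delta_n}{2L_n}$, the open balls $\{B(x_n^i,r/2)\}$ are pairwise disjoint by the separation, and those with center assigned to $\calB_{n,k}$ (diameter $D_{n,k}$, hence radius $D_{n,k}/2$) are all contained in a ball of radius $\tfrac{D_{n,k}}{2}+\tfrac{r}{2}$. Comparing Euclidean volumes (the dimensional constant $\omega_{d_n}$ cancels) bounds the number of such indices by $\big(\tfrac{D_{n,k}+r}{r}\big)^{d_n}=\big(1+\tfrac{2L_nD_{n,k}}{\delta_n}\big)^{d_n}$. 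Summing over $k=1,\dots,K_n$ gives the claimed bound on $\abs{\calI_n(\delta)}$.

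I expect the monotonicity-plus-separation step to be the conceptual heart of the argument: the delicate point is aligning the iteration indices correctly, namely that the gap controlled at iteration $i$ persists through iteration $i'-1$ so that it can contradict membership of $i'$ in $\calI_n(\delta)$. The volume-comparison packing bound is standard, the only care being the diameter-versus-radius bookkeeping of the covering balls.
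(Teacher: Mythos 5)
Your proposal is correct and follows essentially the same route as the paper's proof: first, the forward-step states indexed by $\calI_n(\delta)$ are shown to be $\tfrac{\delta_n}{2L_n}$-separated via Lemma~\ref{lemma:NeighborhoodApproximation} together with monotonicity of the approximation gap (your explicit handling of the index shift $i'-1\ge i$ is in fact cleaner than the paper's wording), and second, a volume/packing comparison of disjoint balls of radius $\tfrac{\delta_n}{4L_n}$ against the enlarged covering balls yields the stated bound. The only cosmetic difference is that you count points ball-by-ball after assigning each state to a covering ball, whereas the paper compares the total volume of the union of small balls with the union of Minkowski sums $\calB_{n,k}+\calB(\delta_n/(4L_n))$; both give the identical estimate.
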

The proof is by a volume argument of the covering balls with details given in Section~\ref{app:proof:lemma:IndexSetCardinality}.
We have an upper bound on the iteration complexity of Algorithm~\ref{alg:DualDP}.
\begin{theorem}
	\label{thm:DDPComplexityUpperBound}
	Given $\epsilon>0$, choose values $\delta=(\delta_n)_{n\in\calN,\calC(n)\neq\varnothing}$ such that $\delta_n>0$ and $\sum_{n\in\calN,\calC(n)\neq\varnothing}p_n\delta_n=\epsilon$.
	Let $\scrB=\{\calB_{n,k}\}_{1\le k\le K_n,n\in\calN}$ be a collection of balls, each with diameter $D_{n,k}\ge0$, such that $\calX_n\subseteq\bigcup_{k=1}^{K_n}\calB_{n,k}$ for $n\in\calN$.
	If Algorithm~\ref{alg:DualDP} terminates with an $\epsilon$-optimal root node solution $(x_r^*,y_r^*)$ at the end of $i$-th iteration, then 
	$$i\le\sum_{\substack{n\in\calN,\\\calC(n)\neq\varnothing}}\sum_{k=1}^{K_n}\left(1+\frac{2L_nD_{n,k}}{\delta_n}\right)^{d_n}.$$ 
\end{theorem}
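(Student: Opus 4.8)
The plan is to combine the cardinality bound of Lemma~\ref{lemma:IndexSetCardinality} with a tree-descent argument that assigns every executed iteration to the index set $\calI_n(\delta)$ of some non-leaf node $n$. Concretely, I would show that the set $\{1,\dots,i\}$ of iteration indices actually executed by Algorithm~\ref{alg:DualDP} is contained in $\bigcup_{n\in\calN,\,\calC(n)\neq\varnothing}\calI_n(\delta)$, so that a union bound followed by Lemma~\ref{lemma:IndexSetCardinality} immediately yields the stated estimate.

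First I would verify that the root threshold equals the target tolerance, i.e.\ $\gamma_r(\delta)=\epsilon$. This follows by unrolling the recursion~\eqref{eq:ApproximationGapVector}: an induction from the leaves upward along the tree shows $p_n\gamma_n(\delta)=\sum_{n'\in\calT(n),\,\calC(n')\neq\varnothing}p_{n'}\delta_{n'}$, using the identity $p_np_{nm}=p_m$. Specializing to $n=r$ with $p_r=1$ and invoking the constraint $\sum_{n\in\calN,\,\calC(n)\neq\varnothing}p_n\delta_n=\epsilon$ gives $\gamma_r(\delta)=\epsilon$.

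Next I would argue that every executed iteration has a large root gap. By the contrapositive of the sufficient termination condition in Proposition~\ref{prop:RootNodeOptimality}, whenever the algorithm proceeds from iteration $j-1$ to iteration $j$ it must be that $\gamma_r^{j}>\epsilon=\gamma_r(\delta)$; the first iteration satisfies this trivially since $\olcQ_r^0\equiv+\infty$. Fixing such a $j$, I would run a descent down the tree starting from the root: at any node $n$ with $\gamma_n^j>\gamma_n(\delta)$, either all children $m\in\calC(n)$ satisfy $\gamma_m^j\le\gamma_m(\delta)$ — in which case $j\in\calI_n(\delta)$ by definition~\eqref{eq:ApproximationIndexSet} — or some child $m$ has $\gamma_m^j>\gamma_m(\delta)$, and I descend to $m$. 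Because leaf nodes have $\gamma_m^j=0=\gamma_m(\delta)$, the descent can never continue through a leaf, and hence terminates at a non-leaf node $n$ with $j\in\calI_n(\delta)$.

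This shows $\{1,\dots,i\}\subseteq\bigcup_{n\in\calN,\,\calC(n)\neq\varnothing}\calI_n(\delta)$, whence $i\le\sum_{n\in\calN,\,\calC(n)\neq\varnothing}\abs{\calI_n(\delta)}$, and applying Lemma~\ref{lemma:IndexSetCardinality} to each summand finishes the proof. The main obstacle is the third step: correctly matching the loop indexing in Algorithm~\ref{alg:DualDP} to the gap quantities $\gamma_r^j$ so that the termination criterion of Proposition~\ref{prop:RootNodeOptimality} yields the strict inequality $\gamma_r^j>\gamma_r(\delta)$ for \emph{every} executed iteration, including the last one. Once that bookkeeping is pinned down, the tree-descent assignment and the union bound are routine.
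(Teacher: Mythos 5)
Your proposal is correct and is essentially the paper's own argument: the paper likewise unrolls the recursion to get $\gamma_r(\delta)=\epsilon$, uses Proposition~\ref{prop:RootNodeOptimality} to show that any non-terminating iteration forces a large root gap, performs the (implicit) tree descent to place that iteration index in some $\calI_n(\delta)$ with $\calC(n)\neq\varnothing$, and concludes via Lemma~\ref{lemma:IndexSetCardinality}. The indexing concern you flag resolves exactly as you suspect, since failing the while-loop test after iteration $j-1$ gives $\gamma_r^{j}>\epsilon$ by the contrapositive of Proposition~\ref{prop:RootNodeOptimality}, and $\olcQ_r^0\equiv+\infty$ covers $j=1$.
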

\begin{proof}
	After the $i$-th iteration, at least one of the following two situations must happen:
	\begin{enumerate}
		\item[i.] At the root node, it holds that $\olcQ_r^i(x_r^{i+1})-\ulcQ_r^i(x_r^{i+1})\le\gamma_r(\delta)$, where $\gamma_r$ is defined in \eqref{eq:ApproximationGapVector}.
		\item[ii.] There exists a node $n\in\calN$ such that $\olcQ_n^i(x_n^{i+1})-\ulcQ_n^i(x_n^{i+1})>\gamma_n(\delta)$, but all of its child nodes satisfy $\olcQ_m^i(x_m^{i+1})-\ulcQ_m^i(x_m^{i+1})\le\gamma_m(\delta)$, $\forall\,m\in\calC(n)$. 
		In other words, $i+1\in \calI_n(\delta)$.
	\end{enumerate}
	Note that $\gamma_r(\delta)=\delta_r+\sum_{m\in\calC(r)}p_{rm}\gamma_m(\delta)=\cdots=\sum_{n\in\calN,\calC(n)\neq\varnothing}p_n\delta_n$.
	If case i happens, then by Proposition~\ref{prop:RootNodeOptimality}, $(x_r^{i+1},y_r^{i+1})$ is an $\epsilon$-optimal root node solution.
	Note that case ii can only happen at most $\sum_{n\in\calN}\abs{\calI_n(\delta)}$ times by Lemma~\ref{lemma:IndexSetCardinality}.
	Therefore, we have that 
	$$i\le\sum_{\substack{n\in\calN\\\calC(n)\neq\varnothing}}\sum_{k=1}^{K_n}\left(1+\frac{2L_nD_{n,k}}{\delta_n}\right)^{d_n},$$
	when the algorithm terminates.
	\qed
\end{proof}
Theorem~\ref{thm:DDPComplexityUpperBound} implies the $\epsilon$-convergence of the algorithm for any $\epsilon>0$.
We remark that the form of the upper bound depends on the values $\delta$ and the covering balls $\calB_{n,k}$, and therefore the right-hand-side can be tightened to the infimum over all possible choices.
While it may be difficult to find the best bound in general, in the next section we take some specific choices of $\delta$ and $\scrB$ and simplify the complexity upper bound, based on the stagewise independence assumption.

\subsection{Upper Bound Analysis on Iteration Complexity of Algorithm~\ref{alg:DeterministicDualDP}.}
\label{sec:ComplexityUB:DeterministicDualDP}
Before giving the iteration complexity bound for Algorithm~\ref{alg:DeterministicDualDP}, we slightly adapt the notations in the previous section to the stagewise independent scenario tree.
We take the values $\delta=(\delta_n)_{n\in\tcN,\calC(n)\neq\varnothing}$ such that $\delta_n=\delta_{n'}$ for all $n,n'\in\tcN(t)$ for some $t=1,\dots,T$. 
Thus we denote $\delta_t=\delta_n$ for any $n\in\tcN(t)$, and $\delta_0=\delta_r$.
The vector of $\gamma_t(\delta)$ is defined recursively for non-leaf nodes as
\begin{equation}
	\gamma_t(\delta):=\gamma_{t+1}(\delta)+\delta_t,\quad\text{if }t\le T-1,
\end{equation}
and $\gamma_T(\delta)=0$.
Let $\gamma_t^i\coloneqq\olcQ_t^{i-1}(x_t^i)-\ulcQ_t^{i-1}(x_t^i)$ and recall that $\gamma_0^i\coloneqq\gamma_r^i$ for each index $i$.
The sets of indices $\calI_t(\delta)$ are defined for $t=0,\dots,T-1$ as
\begin{equation}\label{eq:StagewiseIndependentIndexSet}
	\calI_t(\delta)\coloneqq\left\{i\in\bbN:\gamma_t^i>\gamma_t(\delta)\text{ and }\gamma_{t+1}^i\le\gamma_{t+1}(\delta)\right\}.
\end{equation}
Note that $\gamma_t^i=\max_{n\in\tcN(t)}\gamma_n^i$ (line~\ref{alg:DeterministicDualDP:MaxGap} in Algorithm~\ref{alg:DeterministicDualDP}).
By Lemma~\ref{lemma:NeighborhoodApproximation}, an iteration $i\in\calI_t(\delta)$ implies $\olcQ_t^i(x)-\ulcQ_t^i(x)\le\gamma_t(\delta)$ for all $x\in\calX_n$ with $\|x-x_t^i\|\le\delta_t/(2L_t)$, where $L_t=L_n$ for any $n\in\tcN(t)$.
Moreover, since $\calX_n=\calX_t$ for $n\in\tcN(t)$, 
for any covering balls $\calB_{t,k}\subset\R^{d_t}$ with diameters $D_{t,k}\ge0$, such that $\calX_t\subseteq\cup_{k=1}^{K_t}\calB_{t,k}$, by the same argument of Lemma~\ref{lemma:IndexSetCardinality}, we know that 
\begin{equation}\label{eq:IndexSetCardinality}
	\abs{\calI_t(\delta)}\le\sum_{k=1}^{K_t}\left(1+\frac{2L_tD_{t,k}}{\delta_t}\right)^{d_t}.
\end{equation}
We summarize the upper bound on the iteration complexity of Algorithm \ref{alg:DeterministicDualDP} in the next theorem, and omit the proof since it is almost a word-for-word repetition with the notation adapted as above.
\begin{theorem}
	\label{thm:DeterministicDDPComplexityUpperBound}
	Given any $\epsilon>0$, choose values $\delta=(\delta_t)_{t=0}^{T-1}$ such that $\delta_t>0$ and  $\sum_{t=0}^{T-1}\delta_t=\epsilon$. Let  $\scrB=\{\calB_{t,k}\subset\R^{d_t}\}_{1\le k\le K_t,0\le t\le T-1}$ be a collection of balls, each with diameter $D_{t,k}\ge0$, such that $\calX_t\subseteq\bigcup_{k=1}^{K_t}\calB_{t,k}$ for $0\le t\le T-1$. If
	Algorithm~\ref{alg:DeterministicDualDP} terminates with an $\epsilon$-optimal root node solution $(x_0^*,y_0^*)$ in $i$ iterations, then 
	$$i\le\sum_{t=0}^{T-1}\sum_{k=1}^{K_t}\left(1+\frac{2L_t D_{t,k}}{\delta_t}\right)^{d_t}.$$ 
\end{theorem}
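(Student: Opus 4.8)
The plan is to mirror the proof of Theorem~\ref{thm:DDPComplexityUpperBound} essentially verbatim, replacing every node-indexed object $(\calI_n,\gamma_n,L_n,\calX_n,d_n)$ by its stage-indexed counterpart $(\calI_t,\gamma_t,L_t,\calX_t,d_t)$. This substitution is legitimate precisely because stagewise independence (Assumption~\ref{assum:StagewiseIndependence}) collapses all nodes of a given stage to common data, so that the cardinality bound~\eqref{eq:IndexSetCardinality} plays the role that Lemma~\ref{lemma:IndexSetCardinality} played in the general case, and because the deterministic forward step records $\gamma_t^i=\max_{n\in\tcN(t)}\gamma_n^i$ via the max-gap selection in line~\ref{alg:DeterministicDualDP:MaxGap}.

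First I would set up the dichotomy: after completing the $i$-th iteration and computing $(x_0^{i+1},y_0^{i+1})$, either (case i) the root gap satisfies $\olcQ_0^i(x_0^{i+1})-\ulcQ_0^i(x_0^{i+1})\le\gamma_0(\delta)$, or (case ii) it exceeds $\gamma_0(\delta)$. Telescoping the recursion $\gamma_t(\delta)=\gamma_{t+1}(\delta)+\delta_t$ with $\gamma_T(\delta)=0$ gives $\gamma_0(\delta)=\sum_{t=0}^{T-1}\delta_t=\epsilon$, so in case i Proposition~\ref{prop:RootNodeOptimality} yields $\UB-\LB\le\epsilon$ and the algorithm terminates with an $\epsilon$-optimal root node solution. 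The heart of the argument is then a crossing-stage observation for case ii: since $\gamma_0^{i+1}>\gamma_0(\delta)$ while $\gamma_T^{i+1}=0=\gamma_T(\delta)$, and since the thresholds decrease strictly, $\gamma_0(\delta)>\gamma_1(\delta)>\cdots>\gamma_T(\delta)=0$, I would take $t$ to be the largest stage index with $\gamma_t^{i+1}>\gamma_t(\delta)$; maximality forces $t\le T-1$ and $\gamma_{t+1}^{i+1}\le\gamma_{t+1}(\delta)$, hence $i+1\in\calI_t(\delta)$ by the definition~\eqref{eq:StagewiseIndependentIndexSet}.

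Finally I would count. Every non-terminating iteration falls into case ii and therefore contributes its index to some $\calI_t(\delta)$; consequently case ii can recur at most $\sum_{t=0}^{T-1}\abs{\calI_t(\delta)}$ times, and applying~\eqref{eq:IndexSetCardinality} stage by stage delivers the claimed bound $i\le\sum_{t=0}^{T-1}\sum_{k=1}^{K_t}(1+2L_tD_{t,k}/\delta_t)^{d_t}$.

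The step I expect to require the most care is the crossing-stage/counting bookkeeping rather than any new analysis. Specifically, one must confirm that the stage-indexed version of Lemma~\ref{lemma:NeighborhoodApproximation} (which underlies~\eqref{eq:IndexSetCardinality}) remains valid under deterministic max-gap sampling, i.e.\ that recording only the worst-gap state $x_t^i=x_{n^*(t)}^i$ still carves out a ball of guaranteed reduction in $\calX_t$ — this is exactly where the maximum selection $\gamma_t^i=\max_{n\in\tcN(t)}\gamma_n^i$ is used. A secondary, purely technical point is to ensure that distinct non-terminating iterations yield distinct indices across $\bigcup_t\calI_t(\delta)$ and to dispose of the initial iteration, where $\olcQ_t^0\equiv+\infty$ makes $\gamma_0^1$ infinite so that case ii trivially applies; both are handled exactly as in the proof of Theorem~\ref{thm:DDPComplexityUpperBound}, which is why the formal write-up can be omitted as a word-for-word repetition.
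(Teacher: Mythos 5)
Your proposal is correct and coincides with the paper's own treatment: the paper deliberately omits this proof, stating it is ``almost a word-for-word repetition'' of the proof of Theorem~\ref{thm:DDPComplexityUpperBound} with node-indexed objects replaced by their stage-indexed counterparts, which is precisely the dichotomy (root gap small $\Rightarrow$ termination via Proposition~\ref{prop:RootNodeOptimality}; otherwise a crossing stage puts $i+1$ in some $\calI_t(\delta)$) plus the counting via~\eqref{eq:IndexSetCardinality} that you carry out. Your two flagged care points --- the validity of Lemma~\ref{lemma:NeighborhoodApproximation} under max-gap selection and the distinctness of indices across $\bigcup_t\calI_t(\delta)$ --- are exactly the places where the paper's preparatory discussion (the identity $\gamma_t^i=\max_{n\in\tcN(t)}\gamma_n^i$ from line~\ref{alg:DeterministicDualDP:MaxGap}) does the work.
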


We next discuss some special choices of the values $\delta$ and the covering ball collections $\scrB$.
First, since $\calX_t$ are compact, suppose $\calB_t$ is the smallest ball containing $\calX_t$.
Then we have $\diam\calX_t\le D_t\le 2\diam\calX_t$ where $D_t=\diam\calB_t$.
Moreover, suppose $L_t\le L$ for some $L>0$ and $d_t\le d$ for some $d>0$.
Then by taking $\delta_t=\epsilon/T$ for all $0\le t\le T-1$, we have the following bound.
\begin{corollary}\label{cor:AbsoluteOptimalityGapComplexityBound}
	If Algorithm~\ref{alg:DeterministicDualDP} terminates with an $\epsilon$-optimal root node solution $(x_0^*,y_0^*)$, then the iteration index is bounded by
	$$i\le T\left(1+\frac{2LDT}{\epsilon}\right)^d,$$
	where $L,d,D$ are the upper bounds for $L_t,d_t,$ and $D_t$, $0\le t\le T-1$, respectively.
\end{corollary}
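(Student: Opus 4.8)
The plan is to specialize Theorem~\ref{thm:DeterministicDDPComplexityUpperBound} to the stated choices of $\delta$ and $\scrB$, so the corollary follows by a direct substitution. First I would use a single covering ball per stage: since each $\calX_t$ is compact, I take $K_t=1$ and let $\calB_t$ be the smallest enclosing ball with diameter $D_t=\diam\calB_t$, so that $\diam\calX_t\le D_t\le 2\diam\calX_t$ and the double sum in the theorem collapses to $i\le\sum_{t=0}^{T-1}\bigl(1+2L_tD_t/\delta_t\bigr)^{d_t}$. The choice $\delta_t=\epsilon/T$ is admissible because $\delta_t>0$ and $\sum_{t=0}^{T-1}\delta_t=T\cdot(\epsilon/T)=\epsilon$, so the hypotheses of Theorem~\ref{thm:DeterministicDDPComplexityUpperBound} are met and its bound applies.

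Next I would substitute $\delta_t=\epsilon/T$ to rewrite each summand as $\bigl(1+2L_tD_tT/\epsilon\bigr)^{d_t}$ and bound it termwise by $\bigl(1+2LDT/\epsilon\bigr)^{d}$. The key observation is a two-step monotonicity argument. Since the map $x\mapsto(1+x)^{d_t}$ is nondecreasing for $x\ge 0$, the bounds $L_t\le L$ and $D_t\le D$ give $\bigl(1+2L_tD_tT/\epsilon\bigr)^{d_t}\le\bigl(1+2LDT/\epsilon\bigr)^{d_t}$; and because the base $1+2LDT/\epsilon\ge 1$, raising it to a larger exponent only increases its value, so $\bigl(1+2LDT/\epsilon\bigr)^{d_t}\le\bigl(1+2LDT/\epsilon\bigr)^{d}$ using $d_t\le d$. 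As the resulting per-stage bound is independent of $t$, summing the $T$ terms for $t=0,\dots,T-1$ yields $i\le T\bigl(1+2LDT/\epsilon\bigr)^{d}$, which is the claim.

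I do not anticipate a genuine obstacle, as this is a direct corollary of Theorem~\ref{thm:DeterministicDDPComplexityUpperBound}; the only point requiring care is precisely the two-step monotonicity estimate, where one must keep track of the fact that the base stays at least $1$, so that enlarging the exponent from $d_t$ to $d$ is a valid inequality rather than a reversal. I would also record the estimate $\diam\calX_t\le D_t\le 2\diam\calX_t$ for the smallest enclosing ball to justify interpreting $D$ as, up to a factor of two, a uniform bound on the state-space diameters, although this observation is used only to interpret the constant and is not needed for the inequality itself.
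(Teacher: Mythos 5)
Your proposal is correct and matches the paper's proof, which simply takes $\delta_t=\epsilon/T$, covers each $\calX_t$ with its single smallest enclosing ball, and applies Theorem~\ref{thm:DeterministicDDPComplexityUpperBound}. The monotonicity details you spell out (bounding $L_t\le L$, $D_t\le D$, and then raising the base, which is at least $1$, from exponent $d_t$ to $d$) are exactly the steps the paper leaves implicit.
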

\begin{proof}
	Take $\delta_t=\epsilon/T$ for all $0\le t\le T-1$ and apply Theorem~\ref{thm:DeterministicDDPComplexityUpperBound}.
	\qed
\end{proof}
Note that the iteration complexity bound in Corollary~\ref{cor:AbsoluteOptimalityGapComplexityBound} grows asymptotically $\calO(T^{d+1})$ as $T\to\infty$.
Naturally such bound is not satisfactory since it is nonlinear in $T$ with possibly very high degree $d$.
However, by changing the optimality criterion, we next derive an iteration complexity bound that grows linearly in $T$, while all other parameters, $L, D, \epsilon, d$, are independent of $T$.
\begin{corollary}\label{cor:RelativeOptimalityGapComplexityBound}
	If Algorithm~\ref{alg:DeterministicDualDP} terminates with a $(T\epsilon)$-optimal root node solution $(x_0^*,y_0^*)$, then the iteration index is bounded by
	$$i\le T\left(1+\frac{2LD}{\epsilon}\right)^d,$$
	where $L,d,D$ are the upper bounds for $L_t,d_t,$ and $D_t$, $0\le t\le T-1$, respectively.
\end{corollary}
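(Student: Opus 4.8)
The plan is to apply Theorem~\ref{thm:DeterministicDDPComplexityUpperBound} directly, but with the target optimality gap set to $T\epsilon$ rather than $\epsilon$, together with a uniform per-stage allocation $\delta_t=\epsilon$. First I would observe that asking for a $(T\epsilon)$-optimal root node solution amounts to invoking Theorem~\ref{thm:DeterministicDDPComplexityUpperBound} with the total gap $\sum_{t=0}^{T-1}\delta_t=T\epsilon$ in place of $\epsilon$. Since the theorem permits any positive allocation summing to the prescribed total gap, I would take $\delta_t=\epsilon$ for every $0\le t\le T-1$; this choice satisfies $\sum_{t=0}^{T-1}\delta_t=T\epsilon$ and, crucially, keeps each $\delta_t$ independent of $T$.

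Next, following the same reduction used in the proof of Corollary~\ref{cor:AbsoluteOptimalityGapComplexityBound}, I would cover each compact state space $\calX_t$ by a single ball (so that $K_t=1$) whose diameter obeys $\diam\calX_t\le D_{t,1}\le 2\diam\calX_t\le D$, and use the uniform bounds $L_t\le L$ and $d_t\le d$. Substituting $\delta_t=\epsilon$, $D_{t,1}\le D$, $L_t\le L$, and $d_t\le d$ into the complexity bound of Theorem~\ref{thm:DeterministicDDPComplexityUpperBound} yields
\[
i\le\sum_{t=0}^{T-1}\left(1+\frac{2L_t D_{t,1}}{\delta_t}\right)^{d_t}\le\sum_{t=0}^{T-1}\left(1+\frac{2LD}{\epsilon}\right)^d=T\left(1+\frac{2LD}{\epsilon}\right)^d,
\]
which is precisely the asserted bound.

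The decisive observation—and essentially the only conceptual content—is that relaxing the optimality target from $\epsilon$ to $T\epsilon$ frees each per-stage gap $\delta_t$ from having to shrink like $\epsilon/T$. In the absolute-gap Corollary~\ref{cor:AbsoluteOptimalityGapComplexityBound}, the enforced choice $\delta_t=\epsilon/T$ produces the factor $(1+2LDT/\epsilon)^d$, which is nonlinear in $T$; here, holding $\delta_t=\epsilon$ constant makes each summand $(1+2LD/\epsilon)^d$ independent of $T$, so the sum over the $T$ stages is exactly linear in $T$ while $L,D,\epsilon,d$ remain $T$-independent. I do not anticipate any genuine obstacle beyond verifying that this allocation is admissible and that the resulting summands are $T$-independent; the remainder of the argument is the word-for-word reuse of Theorem~\ref{thm:DeterministicDDPComplexityUpperBound} already exploited in Corollary~\ref{cor:AbsoluteOptimalityGapComplexityBound}.
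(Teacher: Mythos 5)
Your proof is correct and is exactly the paper's argument: the paper likewise invokes Theorem~\ref{thm:DeterministicDDPComplexityUpperBound} with the per-stage allocation $\delta_t=\epsilon$ (so the total gap is $T\epsilon$), a single covering ball per stage, and the uniform bounds $L_t\le L$, $D_t\le D$, $d_t\le d$. Your added commentary on why this allocation yields linearity in $T$, in contrast to the $\delta_t=\epsilon/T$ choice of Corollary~\ref{cor:AbsoluteOptimalityGapComplexityBound}, matches the paper's surrounding discussion.
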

\begin{proof}
	Take $\delta_t=\epsilon$ for all $0\le t\le T-1$ and apply Theorem~\ref{thm:DeterministicDDPComplexityUpperBound}.
	\qed
\end{proof}
The termination criterion in Corollary~\ref{cor:RelativeOptimalityGapComplexityBound} corresponds to the usual relative optimality gap, if the total objective is known to grow at least linearly with \(T\), as is the case for many practical problems.
Last, we consider a special case where $\calX_t$ are finite for all $0\le t\le T-1$.
\begin{corollary}\label{cor:FiniteStateSpaceComplexityBound}
	Suppose the cardinality $\abs{\calX_t}\le K<\infty$ for all $0\le t\le T-1$, for some positive integer $K$.
	In this case, if Algorithm~\ref{alg:DeterministicDualDP} terminates with an $\epsilon$-optimal root node solution $(x_0^*,y_0^*)$, then the iteration index is bounded by
	$$i\le TK.$$
\end{corollary}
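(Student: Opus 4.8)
The plan is to derive this as a direct specialization of Theorem~\ref{thm:DeterministicDDPComplexityUpperBound}, exploiting the fact that when each state space $\calX_t$ is a finite set, it can be covered by singleton balls. Recall that the general bound reads
\[
i\le\sum_{t=0}^{T-1}\sum_{k=1}^{K_t}\left(1+\frac{2L_t D_{t,k}}{\delta_t}\right)^{d_t},
\]
and that this holds for \emph{any} admissible choice of the splitting $\delta=(\delta_t)_{t=0}^{T-1}$ with $\delta_t>0$, $\sum_{t=0}^{T-1}\delta_t=\epsilon$, and \emph{any} collection of covering balls. The key observation is that when $\calX_t$ is finite with $\abs{\calX_t}\le K$, we may cover $\calX_t$ by $\abs{\calX_t}\le K$ balls, one centered at each point of $\calX_t$, each of diameter $D_{t,k}=0$.

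First I would fix an arbitrary admissible $\delta$ (for instance $\delta_t=\epsilon/T$), since its precise value will turn out to be irrelevant. Next I would choose, for each stage $t$, the covering collection $\scrB$ consisting of exactly $K_t\coloneqq\abs{\calX_t}\le K$ degenerate balls of zero diameter, each containing one element of $\calX_t$; this is a valid cover with $\calX_t\subseteq\bigcup_{k=1}^{K_t}\calB_{t,k}$. Substituting $D_{t,k}=0$ into the bound, every summand $\left(1+\tfrac{2L_t\cdot 0}{\delta_t}\right)^{d_t}$ collapses to $1$, so the inner sum over $k$ contributes exactly $K_t\le K$. The outer sum over $t=0,\dots,T-1$ then yields $i\le\sum_{t=0}^{T-1}K_t\le TK$.

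Conceptually, the point is that with zero-diameter balls the Lipschitz constant $L_t$, the dimension $d_t$, the gap $\epsilon$, and the splitting $\delta$ all drop out of the estimate, and the complexity is governed purely by the combinatorial size of the state spaces. This is consistent with the intuition behind Lemma~\ref{lemma:IndexSetCardinality}: each index in $\calI_t(\delta)$ carves out a ball around the current state in which the approximation gap is controlled, and when $\calX_t$ is finite this reasoning degenerates to saying each distinct state can be ``resolved'' at most once per stage, giving at most $\abs{\calX_t}$ iterations in $\calI_t(\delta)$. I do not anticipate a genuine obstacle here: the only care needed is to confirm that a zero-diameter ball (a singleton) is admissible in the statement of Theorem~\ref{thm:DeterministicDDPComplexityUpperBound}, which it is, since the hypothesis only requires $D_{t,k}\ge0$. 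Thus the corollary follows immediately by plugging $D_{t,k}=0$ and $K_t=\abs{\calX_t}\le K$ into the theorem.
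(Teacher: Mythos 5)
Your proposal is correct and follows essentially the same argument as the paper: cover each finite state space $\calX_t$ by degenerate (zero-diameter) balls centered at its points, so every summand in Theorem~\ref{thm:DeterministicDDPComplexityUpperBound} collapses to $1$, yielding $i\le\sum_{t=0}^{T-1}K_t\le TK$. The only difference is cosmetic — you explicitly fix $\delta_t=\epsilon/T$ and note its irrelevance, which the paper leaves implicit.
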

\begin{proof}
	Note that when $\calX_t$ is finite, it can be covered by degenerate balls $B_0(x)$, $x\in\calX_t$.
	Thus $D_{t,k}=0$ for $k=1,\dots,K_t$ and $K_t\le K$ by assumption.
	Apply Theorem~\ref{thm:DeterministicDDPComplexityUpperBound}, we get
	$i\le\sum_{t=0}^{T-1}\sum_{k=1}^{K_t}1\le TK.$
	\qed
\end{proof}
The bound in Corollary~\ref{cor:FiniteStateSpaceComplexityBound} grows linearly in $T$ and does not depend on the value of $\epsilon$.
In other words, we are able to obtain exact solutions to the regularized problem~\eqref{eq:RegularizedNodalProblem} assuming the subproblem oracles.

\begin{remark}
	All the iteration complexity bounds in Theorem~\ref{thm:DeterministicDDPComplexityUpperBound}, Corollary~\ref{cor:AbsoluteOptimalityGapComplexityBound}, Corollary~\ref{cor:RelativeOptimalityGapComplexityBound}, and Corollary~\ref{cor:FiniteStateSpaceComplexityBound} are independent of the size of the scenario tree in each stage $N_t$, $1\le t\le T$.
	This can be explained by the fact that Algorithm~\ref{alg:DeterministicDualDP} evaluates $1+N_T+2\sum_{t=1}^{T-1}N_t$ times of the subproblem oracles in each iteration.
\end{remark}


\subsection{Upper Bound Analysis on Iteration Complexity of Algorithm \ref{alg:StochasticDualDP}}
\label{sec:ComplexityUB:StochasticDualDP}
In the following we study the iteration complexity of Algorithm~\ref{alg:StochasticDualDP}.
For clarity, we model the subproblem oracles $\scrO^\Fwd_n$ and $\scrO^\Bwd_n$ as random functions, that are $\Sigma_i^\oracle$-measurable in each iteration $i\in\bbN$, for any node $n\neq r$, 
where $\{\Sigma_i^\oracle\}_{i=0}^\infty$ is a filtration of \(\sigma\)-algebras in the probability space.
Intuitively, this model says that the information given by $\Sigma_i^\oracle$ could be used to predict the outcome of the subproblem oracles. 
We now make the following assumption on the sampling step.
\begin{assumption}\label{assum:StochasticSampling}
    In each iteration $i$, the $M$ scenario paths are sampled uniformly with replacement, independent from each other and the outcomes of the subproblem oracles.
    That is, the conditional probability of the $j$-th sample $P^{i,j}$ taking any scenario $n_t\in\tcN(t)$ in stage $t$ is almost surely
\begin{equation}
    \Prob(P^{i,j}_t=n_t\mid \Sigma_{\infty}^\oracle,\sigma\{P^{i',j'}_{t'}\}_{(i',j',t')\neq (i,j,t)})=\frac{1}{N_t},
\end{equation}
where $\Sigma_\infty^\oracle:=\cup_{i=1}^\infty\Sigma_i^\oracle$, and $\sigma\{P^{i',j'}_{t'}\}_{(i',j',t')\neq (i,j,t)}$ is the $\sigma$-algebra generated by scenario samples other than the $j$-th sample in stage $t$ of iteration $i$.
\end{assumption}

In the sampling step in the $i$-th iteration, let $\gamma_t^{i,j}\coloneqq\calQ_t^\Reg(x_t^{i,j})-\ulcQ_t^{i-1}(x_t^{i,j})$ for any $t\le T-1$, which is well defined by Assumption~\ref{assum:StagewiseIndependence}, and let $\tilde\gamma_t^{i,j}\coloneqq\max\{\calQ_t^\Reg(x_n)-\ulcQ_t^{i-1}(x_n):(x_n,y_n,z_n)=\scrO_n^\Fwd(x_{t-1}^{i,j},\ulcQ_n^{i-1}),\,n\in\tcN(t)\}$ for each scenario path index $1\le j\le M$.
Note that by definition, we have $\gamma_{t}^{i,j}\le\tilde\gamma_t^{i,j}$ for any $t=1,\dots,T-1$, everywhere in the probability space.
We define the sets of indices $\calI_t(\delta)$ for each $t=0,\dots,T-1$, similar to those in the deterministic sampling case, as 
\begin{equation}\label{eq:StochasticSamplingIndexSet}
	\calI_t(\delta)\coloneqq\bigcup_{j=1}^M\left\{i\in\bbN:\gamma_t^{i,j}>\gamma_t(\delta)\text{ and }\tilde\gamma_{t+1}^{i,j}\le\gamma_{t+1}(\delta)\right\}.
\end{equation}
With the same argument, we know that the upper bound~\eqref{eq:IndexSetCardinality} on the sizes of $\calI_t(\delta)$ holds everywhere for each $t=0,\dots,T-1$.
However, since the nodes in the forward steps are sampled randomly, we do not necessarily have $i\in\cup_{t=0}^{T-1}\calI_t(\delta)$ for each iteration index $i\in\N$ before Algorithm~\ref{alg:StochasticDualDP} first finds an $\epsilon$-optimal root node solution.
Instead, we define an event $A_i(\delta):=\{i\in\cup_{t=0}^{T-1}\calI_t(\delta)\}\bigcup\cup_{j=1}^M\{\gamma_0^{i-1,j}\le\gamma_0(\delta)=\epsilon\}$ for each iteration $i$, that means either some approximation is improved in iteration $i$ or the algorithm has found an $\epsilon$-optimal root node solution in iteration $i-1$. 
The next lemma estimates the conditional probability of $A_i(\delta)$ given any oracles outcomes and samplings up to iteration $i$.
For simplicity, we define two $\sigma$-algebras $\Sigma_i^\sample:=\sigma\{P^{i',j'}\}_{i'\le i,j'=1,\dots,M}$ and $\Sigma_i:=\sigma(\Sigma_i^\oracle,\Sigma_i^\sample)$ for each $i$.
\begin{lemma}\label{lemma:StochasticDDPEventProbability}
    Fix any $\epsilon=\sum_{t=0}^{T-1}\delta_t$.
    Then the conditional probability inequality
    $$\Prob(A_i(\delta)\given \Sigma_{i-1})\ge \nu:=1-(1-1/N)^M,$$ 
    holds almost surely, 
    where $N\coloneqq \prod_{t=1}^{T-1}N_t$ if $T\ge 2$ and $N\coloneqq 1$ otherwise.
\end{lemma}
The proof is given in Section~\ref{app:proof:lemma:StochasticDDPEventProbability}.
Now we are ready to present the probabilistic complexity bound of Algorithm~\ref{alg:StochasticDualDP}, the proof of which is given in Section~\ref{app:proof:thm:StochasticDDPComplexityUpperBound}.

\begin{theorem}\label{thm:StochasticDDPComplexityUpperBound}
	Let $I=I(\delta,\scrB)$ denote the iteration complexity bound in Theorem~\ref{thm:DeterministicDDPComplexityUpperBound}, determined by the vector $\delta$ and the collection of state space covering balls $\scrB$, and $\nu$ denote the probability bound proposed in Lemma~\ref{lemma:StochasticDDPEventProbability}.
	Moreover, let $\iota$ be the random variable of the smallest index such that the root node solution $(x_0^{\iota+1},y_0^{\iota+1})$ is $\epsilon$-optimal in Algorithm~\ref{alg:StochasticDualDP}.
	Then for any real number $\kappa>1$, the probability 
	$$\Prob\left(\iota\ge 1+\frac{\kappa I}{\nu}\right)\le \exp\left(\frac{-I\nu(\kappa-1)^2}{16\kappa}\right).$$
\end{theorem}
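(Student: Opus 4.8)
The plan is to reduce this probabilistic statement to a lower-tail concentration estimate for a sum of conditionally Bernoulli indicators, using the deterministic cardinality bound as a hard budget on the number of ``useful'' iterations and Lemma~\ref{lemma:StochasticDDPEventProbability} as a uniform lower bound on the per-iteration success probability. First I would write $n:=\kappa I/\nu$ and recall from the cardinality bound~\eqref{eq:IndexSetCardinality} that $\sum_{t=0}^{T-1}\abs{\calI_t(\delta)}\le I$ holds everywhere, so the progress event $A_i(\delta)=\{i\in\cup_{t=0}^{T-1}\calI_t(\delta)\}$ can hold for at most $I$ distinct indices $i$. Arguing as in the proof of Theorem~\ref{thm:DeterministicDDPComplexityUpperBound} (each progress event carves out a ball in some $\calX_t$ via Lemma~\ref{lemma:NeighborhoodApproximation}, and the covering budget of $\cup_t\calI_t(\delta)$ is $I$), once $A_i(\delta)$ has occurred $I$ times the approximations are uniformly tight enough that the root gap drops to $\gamma_0(\delta)=\epsilon$, forcing termination. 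Hence $\{\iota\ge 1+n\}$ is contained in the event that strictly fewer than $I$ of the indices $i\in\{1,\dots,n\}$ satisfy $A_i(\delta)$.

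Next I would introduce the filtration $\mathcal{F}_{i-1}:=\sigma(\Sigma^{i-1},\{P^{i',j'}\}_{i'<i})$ and observe that the non-termination event $E_i(\epsilon)$ is $\mathcal{F}_{i-1}$-measurable and decreasing in $i$. Lemma~\ref{lemma:StochasticDDPEventProbability} then reads $\Prob(A_i(\delta)\given\mathcal{F}_{i-1})\ge\nu$ on $E_i(\epsilon)$. Writing $X_i:=\indfunc_{A_i(\delta)}$ and $S_n:=\sum_{i=1}^n X_i$, the inclusion from the first step becomes $\{\iota\ge 1+n\}\subseteq\{S_n\le I-1\}\cap\bigcap_{i\le n}E_i(\epsilon)$. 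On this intersection every conditional mean is at least $\nu$, so the compensator obeys $\sum_{i=1}^n\E[X_i\given\mathcal{F}_{i-1}]\ge n\nu=\kappa I$ while $S_n\le I-1$; that is, $S_n$ falls below its conditional mean by more than $(\kappa-1)I$.

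I would then bound this deviation by an exponential (Chernoff/Hoeffding-type) estimate: apply Markov's inequality to $e^{-sS_n}$ restricted to $\bigcap_{i\le n}E_i(\epsilon)$, peel off one conditional expectation at a time (carrying the $\mathcal{F}_{i-1}$-measurable indicator $\indfunc_{E_i(\epsilon)}$ through the iterated conditioning so that only the controlled factors $\E[e^{-sX_i}\given\mathcal{F}_{i-1}]\le e^{-s\nu+s^2/8}$ contribute), and optimize $s$. With $n=\kappa I/\nu$ the expected success count is $\kappa I$ and the deviation to $I$ produces exactly the factor $\exp(-2I\nu(\kappa-1)^2/\kappa)$. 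The remaining prefactor $\nu^{-I}$ is the price of detaching the per-step bound from the conditioning on $E_i(\epsilon)$ — equivalently, of a union/measure-change step over the at most $I$ iterations on which the progress probability is actually controlled, rather than over the full horizon $n$.

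The hard part will be precisely this conditioning on non-termination: Lemma~\ref{lemma:StochasticDDPEventProbability} only guarantees the $\nu$-lower bound while no $\epsilon$-optimal solution has yet been found, and after termination we have no control on $A_i$. A naive product bound over all $n$ iterations collapses because the post-termination factors equal $1$ and contribute nothing; the fix is to keep $\indfunc_{E_i(\epsilon)}$ inside the iterated conditioning, using its measurability with respect to $\mathcal{F}_{i-1}$. The second delicate point is the logical bridge of the first paragraph — that exhausting the $I$-element budget of $\cup_t\calI_t(\delta)$ genuinely forces $\epsilon$-optimality at the root even though the forward states are sampled randomly — which I would anchor in the Lipschitz-and-covering geometry already established for Lemma~\ref{lemma:NeighborhoodApproximation} and the bound~\eqref{eq:IndexSetCardinality}.
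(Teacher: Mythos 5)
Your proposal is correct in substance, but it takes a genuinely different route from the paper for the key probabilistic estimate, so a comparison is worthwhile. Both arguments rest on the same two ingredients: the everywhere-valid cardinality cap $\sum_{t=0}^{T-1}\abs{\calI_t(\delta)}\le I$ (so $S_n\le I$ holds surely) and the conditional bound of Lemma~\ref{lemma:StochasticDDPEventProbability} on the non-termination event. The paper, however, runs a combinatorial recursion $\Prob(S_i\le k)\le\Prob(S_{i-1}\le k-1)+(1-\nu)\Prob(S_{i-1}\le k)$, solves it to get $\binom{i}{k}(1-\nu)^{i-k}$, converts this to a binomial CDF at the cost of the factor $\nu^{-k}$, and finishes with Hoeffding's inequality for the binomial distribution; the prefactor $\nu^{-I}$ is intrinsic to that conversion. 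You instead run a Chernoff/Azuma argument directly on $S_n$, carrying the $\mathcal{F}_{i-1}$-measurable indicators $\indfunc_{E_i(\epsilon)}$ through the iterated conditioning. Carried out as you describe, this yields $\Prob\left(\iota\ge 1+\kappa I/\nu\right)\le \exp\left(-2I\nu(\kappa-1)^2/\kappa\right)$ with no $\nu^{-I}$ factor at all, which is strictly stronger; the theorem then follows trivially since $\nu^{-I}\ge 1$. This is essentially the refinement the paper only alludes to in Remark~\ref{remark:StochasticSamplingComplexityBound}. Your version is also cleaner on a point where the paper is loose: the paper justifies its recursion by asserting $\{S_{i-1}\le k\}\subseteq E_i(\epsilon)$ for $k\le I$ (and later ``$S_i\le I$ if and only if $\iota\ge i+1$''), which cannot hold literally, precisely because $S_i\le I$ holds everywhere; the correct repair is exactly what you do, namely conditioning on the non-termination events themselves rather than on the level sets of $S_{i-1}$.

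Two small repairs to your own write-up. First, your inclusion $\{\iota\ge 1+n\}\subseteq\{S_n\le I-1\}\cap\bigcap_{i\le n}E_i(\epsilon)$ is justified by the claim that $I$ occurrences of $A_i(\delta)$ force the root gap down to $\epsilon$; that is not what the covering geometry gives ($I$ is only an upper bound on how many progress events can ever occur, not a guarantee that $I$ of them exhaust the state spaces). But you do not need this: the weaker inclusion with $\{S_n\le I\}$ holds surely by the cardinality cap, and the same Chernoff computation, measuring the deviation from the compensator value $n\nu=\kappa I$ down to $I$, gives the identical exponent. Second, your closing explanation of the prefactor $\nu^{-I}$ as ``the price of detaching the per-step bound from the conditioning'' does not correspond to any step of your argument — your argument never incurs that price; the honest statement is simply that your bound is stronger and implies the theorem because $\nu^{-I}\ge 1$.
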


\begin{remark}\label{remark:StochasticSamplingComplexityBound}
    Theorem~\ref{thm:StochasticDDPComplexityUpperBound} shows that for a fixed problem (such that $I=I(\delta,\scrB)$ and $N=N_1\cdots N_{T-1}$ are fixed), given any probability threshold $q\in(0,1)$, the number of iterations needed for Algorithm~\ref{alg:StochasticDualDP} to find an $\epsilon$-optimal root node solution with probability greater than $1-q$ is $\calO(-\ln{q}/\nu^2)$, which does not depend on $I$.
    In particular, if we set $M=1$, then the number of iterations needed is $\calO(-N^2\ln{q})$, which is exponential in the number of stage $T$ if $N_t\ge 2$ for all $t=1,\dots,T-1$.
    It remains unknown to us whether there exists a complexity bound for Algorithm~\ref{alg:StochasticDualDP} that is polynomial in $T$ in general.
\end{remark}

\section{Lower Bounds on Iteration Complexity of Proposed Algorithms.}\label{sec:ComplexityLowerBounds}

In this section, we discuss the sharpness of the iteration complexity bound of Algorithm~\ref{alg:DeterministicDualDP} given in Section~\ref{sec:ComplexityUpperBounds}. 
In particular, we are interested in the question whether it is possible that the iteration needed for Algorithm~\ref{alg:DeterministicDualDP} to find an $\epsilon$-optimal root node solution grows linearly in $T$ when the state spaces are infinite sets.
We will see that in general it is not possible, with or without the assumption of convexity.
The following lemma simplifies the discussion in this section.

\begin{lemma}\label{lemma:LipschitzOptimalValueFunctionApproximation}
	Suppose $f_n(z,y,x)$ is $l_n$-Lipschitz continuous in $z$ for each $n\in\calN$.
	If we choose $\psi_n(x)=\norm{x}$ and $\sigma_n\ge l_n$, then $Q_n^\Reg(x)=Q_n(x)$ on $\calX_{a(n)}$ for all non-root nodes $n\in\calN$.
\end{lemma}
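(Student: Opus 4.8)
The plan is to establish the two inequalities $Q_n^\Reg\le Q_n$ and $Q_n^\Reg\ge Q_n$ on $\calX_{a(n)}$ separately. The first is already supplied by Proposition~\ref{prop:InfConvolutionLipschitz}, since the chosen $\psi_n(x)=\norm{x}$ is a $1$-Lipschitz penalty function on the compact set $\calX_{a(n)}-\calX_{a(n)}$. Hence the whole argument reduces to the reverse inequality $Q_n^\Reg(x)\ge Q_n(x)$ for every $x\in\calX_{a(n)}$.

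I would prove the reverse inequality by induction on the nodes, traversing the tree from the leaves toward the children of the root (equivalently, by decreasing distance to the leaves), so that when a node $n$ is treated, the equality $Q_m^\Reg=Q_m$ has already been secured for every child $m\in\calC(n)$. The engine of the proof is a pointwise domination: for any $(x_n,y_n)\in\calF_n$ and any $z_n\in\calX_{a(n)}$, the $l_n$-Lipschitz continuity of $f_n$ in its first argument gives $f_n(z_n,y_n,x_n)\ge f_n(x_{a(n)},y_n,x_n)-l_n\norm{x_{a(n)}-z_n}$, so that adding the penalty yields $f_n(z_n,y_n,x_n)+\sigma_n\norm{x_{a(n)}-z_n}\ge f_n(x_{a(n)},y_n,x_n)+(\sigma_n-l_n)\norm{x_{a(n)}-z_n}\ge f_n(x_{a(n)},y_n,x_n)$, the last step using $\sigma_n\ge l_n$. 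Thus moving the duplicated variable $z_n$ away from $x_{a(n)}$ can never lower the objective of the regularized nodal problem \eqref{eq:RegularizedNodalProblem}.

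For the base case of a leaf node the cost-to-go sum in \eqref{eq:RegularizedNodalProblem} is empty, so the displayed domination immediately gives $f_n(z_n,y_n,x_n)+\sigma_n\norm{x_{a(n)}-z_n}\ge f_n(x_{a(n)},y_n,x_n)$; minimizing the left side over $(x_n,y_n)\in\calF_n$ and $z_n\in\calX_{a(n)}$ and the right side over $(x_n,y_n)\in\calF_n$ (it being free of $z_n$) yields $Q_n^\Reg\ge Q_n$. For the inductive step at a non-leaf node $n$, I would first invoke the inductive hypothesis $Q_m^\Reg=Q_m$ for all $m\in\calC(n)$ to rewrite the expected cost-to-go term of \eqref{eq:RegularizedNodalProblem} using the original value functions $Q_m$; since that term depends only on $x_n$ and not on $z_n$, the same domination applied in the variable $z_n$ shows the minimizing choice is $z_n=x_{a(n)}$, and collapsing the $z_n$-minimization recovers exactly the recursion \eqref{eq:NodalProblem} defining $Q_n$. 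This gives $Q_n^\Reg\ge Q_n$, and with the first inequality $Q_n^\Reg=Q_n$, completing the induction over all non-root nodes.

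The step needing the most care is the recursive one: the regularized recursion \eqref{eq:RegularizedNodalProblem} contains $Q_m^\Reg$ rather than $Q_m$, so the penalty-domination argument alone does not close the gap at $n$ until the children are resolved. This is precisely why the induction must proceed from the leaves inward, and why the hypothesis $\sigma_n\ge l_n$ is needed at every node rather than only at the node under consideration. Once the substitution $Q_m^\Reg=Q_m$ is in place, the remainder is the same one-line domination used in the base case, so I anticipate no further difficulty.
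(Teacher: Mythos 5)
Your proof is correct and follows essentially the same route as the paper's: induction from the leaves inward, substituting $Q_m^\Reg=Q_m$ for all children $m\in\calC(n)$ and then using $\sigma_n\ge l_n$ to show the penalized objective dominates the unpenalized one, with the reverse inequality $Q_n^\Reg\le Q_n$ coming for free. The only cosmetic difference is that you apply the Lipschitz bound pointwise to $f_n$ before minimizing, whereas the paper first collapses the inner minimum to write $Q_n^\Reg(x)=\min_{z\in\calX_{a(n)}}\{Q_n(z)+\sigma_n\norm{x-z}\}$ and then invokes the $l_n$-Lipschitz continuity of $Q_n$ itself (inherited from $f_n$ via Proposition~\ref{prop:OptimalValueFunctionProperties}); the two manipulations are equivalent.
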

The proof exploits the Lipschitz continuity of $f_n$ and the fact $Q_n^R(x)$ is an under-approximation of $Q_n(x)$ in an inductive argument. 
The details are given in Section~\ref{app:proof:lemma:LipschitzOptimalValueFunctionApproximation}.
In other words, for problems that already have Lipschitz continuous value functions, the regularization does not change the function value at any point.
Thus the examples in the rest of this section serve the discussion not only for Algorithm~\ref{alg:DeterministicDualDP}, but for more general algorithms including SDDP and SDDiP.

\subsection{General Lipschitz Continuous Problems.}
We discuss the general Lipschitz continuous case, i.e., the nodal objective functions $f_n(z,y,x)$ are $l_n$-Lipschitz continuous in $z$ but not necessarily convex.
In this case we choose to approximate the value function using $\psi_n(x)=\norm{x}$ and assume that $l_{n,\rho}\ge l_n$.
We can set $l_{n,\lambda}=0$ for all $n\in\calN$, without loss of exactness of the approximation by the proof of Proposition~\ref{prop:GeneralizedCutTightness}.
We begin with the following lemma on the complexity of such approximation.
\begin{lemma}\label{lemma:GeneralLipschitzFunction}
	Consider a norm ball $\calX=\{x\in\R^d:\norm{x}\le D/2\}$ and a finite set of points $\calW=\{w_k\}_{k=1}^K\subset\calX$.
	Suppose that there is $\beta>0$ and an $L$-Lipschitz continuous function $f:\calX\to\R_+$ such that $\beta<f(w_k)<2\beta$ for $k=1,\dots,K$. 
	Define 
	\begin{itemize}
		\item $\displaystyle\ulQ(x)\coloneqq\max_{k=1,\dots,K}\{0,f(w_k)-L\norm{x-w_k}\}$ and 
		\item $\displaystyle\olQ(x)\coloneqq\min_{k=1,\dots,K}\{f(w_k)+L\norm{x-w_k}\}$.
	\end{itemize}
	 If $K<\left(\frac{DL}{4\beta}\right)^d$, then $\displaystyle\min_{x\in\calX}\ulQ(x)=0$ and $\displaystyle\min_{x\in\calX}\olQ(x)>\beta$.
\end{lemma}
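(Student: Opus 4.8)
The plan is to treat the two claims separately, since they have quite different natures: the bound on $\olQ$ is immediate and uses only $f(w_k)>\beta$, whereas the bound on $\ulQ$ is the real content and rests on a covering/volume argument that converts the hypothesis $K<(DL/4\beta)^d$ into the assertion that $K$ small balls cannot cover $\calX$.

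First I would dispose of the over-approximation. For every $x\in\calX$ and every index $k$ we have $f(w_k)+L\norm{x-w_k}\ge f(w_k)>\beta$, hence $\olQ(x)=\min_k\{f(w_k)+L\norm{x-w_k}\}\ge\min_k f(w_k)>\beta$, the final strict inequality holding because there are only finitely many $w_k$, each with $f(w_k)>\beta$. Taking the minimum over $x\in\calX$ then yields $\min_{x\in\calX}\olQ(x)>\beta$. Note that this part does not invoke the cardinality bound on $K$ at all.

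For the under-approximation, I would first observe that $\ulQ\ge 0$ everywhere by construction, so it suffices to exhibit a single point $x^*\in\calX$ with $\ulQ(x^*)=0$. Since $f(w_k)<2\beta$, the $k$-th cut obeys $f(w_k)-L\norm{x-w_k}<0$ whenever $\norm{x-w_k}\ge 2\beta/L$; consequently $\ulQ(x^*)=0$ as soon as $x^*$ lies outside every open ball $B_k:=\{x:\norm{x-w_k}<2\beta/L\}$. The task thus reduces to showing $\calX\setminus\bigcup_{k=1}^K B_k\neq\varnothing$.

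This last reduction is where the hypothesis enters and is the crux of the argument. Writing $V_d$ for the volume of the unit $\norm{\cdot}$-ball, each $B_k$ has volume $V_d(2\beta/L)^d$ while $\Vol(\calX)=V_d(D/2)^d$, and the bound $K<(DL/4\beta)^d$ is exactly equivalent to $K\,(2\beta/L)^d<(D/2)^d$. By subadditivity of volume this gives $\Vol\bigl(\bigcup_k B_k\bigr)\le\sum_k\Vol(B_k)=K\,V_d(2\beta/L)^d<V_d(D/2)^d=\Vol(\calX)$, so the $B_k$ cannot cover $\calX$, the complement has positive measure hence is nonempty, and a point $x^*$ as required exists; this yields $\min_{x\in\calX}\ulQ(x)=0$. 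The only delicate point is keeping every inequality strict — using $f(w_k)<2\beta$ and the strict cardinality bound on $K$ — so that the cover is genuinely measure-deficient; the rest is a direct computation with norm-ball volumes, and the common factor $V_d$ cancels, leaving the clean threshold $(DL/4\beta)^d$.
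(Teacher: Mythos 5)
Your proposal is correct and follows essentially the same argument as the paper's proof: a volume comparison showing that $K$ balls of radius $2\beta/L$ cannot cover $\calX$ (the paper phrases this as a contradiction, you phrase it directly via subadditivity of volume, which is immaterial), yielding a point where every cut $f(w_k)-L\norm{x-w_k}$ is negative, plus the immediate bound $\olQ(x)\ge\min_k f(w_k)>\beta$ for the over-approximation.
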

The proof is given in Section~\ref{app:proof:lemma:GeneralLipschitzFunction}. 
The lemma shows that if the number of points in $\calW$ is too small, i.e. $K<(DL/2\beta)^d$, then the difference between the upper and lower bounds could be big, i.e.  $\olQ(\bar{x})-\ulQ(\bar{x})>\beta$ for some $\bar{x}$. In other words, in order to have a small gap between the upper and lower bounds, we need sufficient number of sample points. This lemma is directly used to provide a lower bound on the complexity of Algorithm~\ref{alg:DeterministicDualDP}.

Now we construct a Lipschitz continuous multistage problem defined on a chain, i.e., a scenario tree, where each stage has a single node, $N(t)=1$ for $t=1,\dots,T$. The problem is given by the value functions in each stage as,
\begin{align}
\begin{dcases}
 Q_r = \min_{x_0\in\calX_r}Q_1(x_0),\\
 Q_{t}(x_{t-1}) =\min_{x_t\in\calX_t}\left\{f_{t}(x_{t-1})+Q_{t+1}(x_t)\right\}, & 1\le t\le T-1,\\
Q_{T}(x_{T-1}) =f_{T}(x_{T-1}).
\end{dcases}\label{eq:LipschitzBoxExample}
\end{align}
Here for all $t=1,\dots,T$, $f_t : \calX_t\rightarrow\R_+$ is an $L$-Lipschitz continuous function that satisfies 
$\beta<f_t(x)<2\beta$ for all $x\in\calX_t$ with $\beta:=\epsilon/T$, the number of stages $T\ge 1$, and $\epsilon>0$ is a fixed constant. The state space $\calX_t:=\calB^d(D/2)\subset\R^d$ is a ball with radius $D/2>0$. We remark that $\epsilon$ will be the optimality gap in Theorem \ref{thm:GeneralLipschitzComplexity}. So for a fixed optimality gap $\epsilon$, we construct an instance of multistage problem \eqref{eq:LipschitzBoxExample} that will prove to be difficult for Algorithm \ref{alg:DeterministicDualDP} to solve.
Also \eqref{eq:LipschitzBoxExample} is constructed such that there is no constraint coupling the state variables $x_t$ in different stages. 

By Lemma~\ref{lemma:LipschitzOptimalValueFunctionApproximation}, if we choose $\psi_n(x)=\norm{x}$ for all $n\in\calN$ 
and $l_{n,\rho}= L$ for the problem~\eqref{eq:LipschitzBoxExample}, then we have $Q^\Reg_t(x)=Q_t(x)$ for all $t=1,\dots,T$.
The next theorem shows a lower bound on the iteration complexity of problem~\eqref{eq:LipschitzBoxExample} with this choice of penalty functions.
\begin{theorem}\label{thm:GeneralLipschitzComplexity}
	For any optimality gap $\epsilon>0$, there exists a problem of the form~\eqref{eq:LipschitzBoxExample} with subproblem oracles $\scrO_n^\Fwd,\scrO_n^\Bwd$, $n\in\calN$, and $\scrO_r$, such that if Algorithm~\ref{alg:DeterministicDualDP} gives $\UB-\LB\le\epsilon$ in the $i$-th iteration, then
	$$i\ge\left(\frac{DLT}{4\epsilon}\right)^d.$$
\end{theorem}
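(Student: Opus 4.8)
The plan is to prove the contrapositive: if Algorithm~\ref{alg:DeterministicDualDP} has performed only $i<(DLT/(4\epsilon))^d$ iterations, then $\UB-\LB>\epsilon$, so it cannot have terminated. Write $\beta:=\epsilon/T$ and $K^\ast:=(DL/(4\beta))^d=(DLT/(4\epsilon))^d$. The whole argument then reduces to two facts: (i) the algorithmic lower bound stays pinned at $\LB=0$, and (ii) the upper bound satisfies $\UB>\epsilon$. Fact (ii) is almost immediate: because \eqref{eq:LipschitzBoxExample} has no constraints coupling the state variables, its value functions satisfy $Q_t(x_{t-1})=f_t(x_{t-1})+\sum_{s=t+1}^{T}\min_{\calX_s}f_s$, so $v^{\primal}=Q_r=\sum_{s=1}^{T}\min_{\calX_s}f_s>T\beta=\epsilon$, since each $f_s>\beta$ on all of $\calX_s$. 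With the choice $\psi_n=\norm{\cdot}$ and $\sigma_n=L\ge l_n$ we have $Q_n^{\Reg}=Q_n$ by Lemma~\ref{lemma:LipschitzOptimalValueFunctionApproximation}, hence $v^{\reg}=v^{\primal}$, and the validity of the over-approximation (Proposition~\ref{prop:OverApproximationValidness}) gives $\UB\ge v^{\reg}=v^{\primal}>\epsilon$.

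The heart of the proof is Fact (i), which I would establish by induction on the iteration index $i$ (for $1\le i<K^\ast$), with an inner backward induction over the stages $t=T,T-1,\dots,0$, proving $\min_{x\in\calX_t}\ulcQ_t^i(x)=0$. I first fix the subproblem oracles: the forward oracle $\scrO_t^{\Fwd}$ returns any minimizer $x_t^i\in\argmin_x\ulcQ_t^{i-1}(x)$ (a point where $\ulcQ_t^{i-1}$ vanishes, by the inductive hypothesis), and the backward oracle $\scrO_t^{\Bwd}$ returns $\hat\lambda_t^i=0$ (forced since $l_{t,\lambda}=0$), $\hat\rho_t^i=L$, the exact copy $\hat z_t^i=x_{t-1}^i$, and $\hat x_t^i\in\argmin_x\ulcQ_t^i(x)$. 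Since $f_t$ is $L$-Lipschitz, the inner minimization over $z$ in \eqref{eq:NonRootBackwardOracleProblem} is attained at $z=x_{t-1}^i$ and the outer maximization over $\rho$ at $\rho=L$, so formulas~\eqref{eq:UnderApproximationCut}--\eqref{eq:UnderApproximationUpdateFormula} yield the cut $C_t^i(x)=f_t(x_{t-1}^i)+\min_x\ulcQ_t^i(x)-L\norm{x-x_{t-1}^i}$ that feeds the update of $\ulcQ_{t-1}^i$. The base case $t=T$ is trivial, as $\calQ_T\equiv0$ gives $\ulcQ_T^i\equiv0$; in the inductive step, $\min_x\ulcQ_t^i=0$ makes every cut height equal $f_t(x_{t-1}^k)\in(\beta,2\beta)$, so $\ulcQ_{t-1}^i(x)=\max_{k\le i}\{0,\,f_t(x_{t-1}^k)-L\norm{x-x_{t-1}^k}\}$ is exactly the function $\ulQ$ of Lemma~\ref{lemma:GeneralLipschitzFunction} built from at most $i<K^\ast$ points. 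That lemma then delivers $\min_x\ulcQ_{t-1}^i(x)=0$, completing the step.

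Applying the induction at $t=0$ gives $\min_x\ulcQ_0^i(x)=0$, and because $f_r\equiv0$ the root update selects $x_0^{i+1}\in\argmin_x\ulcQ_0^i(x)$, so $\LB=f_r+\ulcQ_0^i(x_0^{i+1})=0$, which is Fact (i); combined with $\UB>\epsilon$ this gives $\UB-\LB>\epsilon$ whenever $i<K^\ast$, and the contrapositive is the claimed bound $i\ge(DLT/(4\epsilon))^d$. The main obstacle is the bookkeeping in the inductive step: one must verify that, under the chosen oracles, the cut generated at \emph{every} iteration $k\le i$ has slope exactly $L$ and height exactly $f_t(x_{t-1}^k)$ (so that the floor at zero and the vanishing downstream value $\min_x\ulcQ_t^k=0$ combine to reproduce the \emph{precise} $\ulQ$ of Lemma~\ref{lemma:GeneralLipschitzFunction}), and that the distinct cut points feeding each stage never exceed the iteration count. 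The no-coupling structure of~\eqref{eq:LipschitzBoxExample} together with the condition $\beta<f_t<2\beta$ holding on \emph{all} of $\calX_t$, rather than merely at sampled points, are precisely what make this reduction go through for any trajectory the oracles may produce.
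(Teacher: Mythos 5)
Your proposal is correct and its core is the same as the paper's: you prove the contrapositive, pin the oracles adversarially (with $l_{n,\lambda}=0$, $\hat\rho_t^i=L$, $\hat z_t^i=x_{t-1}^i$, and forward solutions minimizing the current under-approximation), and invoke Lemma~\ref{lemma:GeneralLipschitzFunction} stage by stage to conclude that fewer than $(DLT/4\epsilon)^d$ iterations leave $\LB=0$. The one half where you genuinely depart from the paper is the bound $\UB>\epsilon$: you argue $\UB\ge v^{\reg}=v^{\primal}=\sum_{t}\min_{\calX_{t-1}}f_t>T\beta=\epsilon$ by combining Proposition~\ref{prop:OverApproximationValidness} (validity of the root over-approximation) with Lemma~\ref{lemma:LipschitzOptimalValueFunctionApproximation} and the decoupled structure of~\eqref{eq:LipschitzBoxExample}, whereas the paper never passes through the true value function: it proves the recursion $\min_{x}\olQ_t^i(x)>\epsilon/T+\min_{x}\olQ_{t+1}^i(x)$ directly from $f_t>\epsilon/T$ and the over-approximation update, and then unrolls it over $T$ stages. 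Your route is shorter and reuses established results; the paper's stays entirely inside the algorithm's approximations. On the lower-bound half your bookkeeping also differs cosmetically: you establish the exact identity $\ulcQ_{t-1}^i(x)=\max_{k\le i}\bigl\{0,\,f_t(x_{t-1}^k)-L\norm{x-x_{t-1}^k}\bigr\}$ by a double induction (over iterations, and backward over stages), which requires the cut heights to be exactly $f_t(x_{t-1}^k)$ under your pinned oracles, while the paper only needs the one-sided inequality $\ulQ_t^i(x)\le\max_{k\le i}\{0,f_t(x_{t-1}^k)-L\norm{x-x_{t-1}^k}\}+\min_{x'}\ulQ_{t+1}^i(x')$ obtained from monotonicity of the approximations, which is marginally more robust to how the oracles break ties; with your oracle choices both yield $\min_x\ulcQ_t^i(x)=0$, so the two arguments are equivalent in substance.
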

The proof is given in Section~\ref{app:proof:thm:GeneralLipschitzComplexity}.
The theorem shows that in general Algorithm~\ref{alg:DeterministicDualDP} needs at least $\calO(T^d)$ iterations before termination.
We comment that this is due to the fact that the approximation using generalized conjugacy is tight only locally.
Without convexity, one may need to visit many states to cover the state space to achieve tight approximations of the value functions before the algorithm is guaranteed to find an $\epsilon$-optimal solution.

\subsection{Convex Lipschitz Continuous Problems.}
In the above example for general Lipschitz continuous problem, we see that the complexity of Algorithm~\ref{alg:DeterministicDualDP} grows at a rate of $\calO(T^d)$.
It remains to answer whether convexity could help us avoid this possibly undesirable growth rate in terms of $d$. 
We show that even by using linear cuts, rather than generalized conjugacy cuts, for convex value functions, the complexity lower bound of the proposed algorithms could not be substantially improved.
We begin our discussion with a definition.
\begin{definition}
	Given a $d$-sphere $\calS^d(R)=\{x\in\R^{d+1}:\norm{x}_2=R\}$ with radius $R>0$, a spherical cap with depth $\beta>0$ centered at a point $x\in \calS^d(R)$ is the set
	$$\calS^d_\beta(R,x)\coloneqq\{y\in \calS^d(R):\innerprod{y-x}{x}\ge -\beta R\}.$$
\end{definition}
The next lemma shows that we can put many spherical caps on a sphere, the center of each is not contained in any other spherical cap, the proof of which is given in Section~\ref{app:proof:lemma:SphericalCap}.
\begin{lemma}\label{lemma:SphericalCap}
	Given a $d$-sphere $\calS^d(R),d\ge 2$ and depth $\beta<(1-\frac{\sqrt{2}}{2})R$, there exists a finite set of points $\calW$ with 
	$$\abs{\calW}\ge\frac{(d^2-1)\sqrt{\pi}}{d}\frac{\Gamma(d/2+1)}{\Gamma(d/2+3/2)}\left(\frac{R}{2\beta}\right)^{(d-1)/2},$$ 
	such that, for any $w\in \calW$, $\calS^d_\beta(R,w)\cap \calW=\{w\}$.
\end{lemma}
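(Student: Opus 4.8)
The plan is to prove the bound by a covering argument applied to a maximal packing on the sphere. First I would record that cap membership is symmetric: since $\innerprod{y-x}{x}=\innerprod{y}{x}-R^2$, we have $y\in\calS^d_\beta(R,x)$ if and only if $\innerprod{x}{y}\ge R(R-\beta)$, a condition symmetric in $x$ and $y$. Writing the angle $\theta$ between two sphere points via $\innerprod{x}{y}=R^2\cos\theta$, this is exactly $\theta\le\theta_0$ where $\cos\theta_0=1-\beta/R$. Hence the required property ``$\calS^d_\beta(R,w)\cap\calW=\{w\}$ for all $w\in\calW$'' is equivalent to asking that the points of $\calW$ be pairwise separated by angle strictly greater than $\theta_0$. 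I would then take $\calW$ to be a \emph{maximal} set with this separation property; it is finite because the disjoint caps of angular radius $\theta_0/2$ centered at its points have a common positive lower bound on their areas on the compact sphere (here $d\ge2$).

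Next I would exploit maximality: if no further point can be added, then every $y\in\calS^d(R)$ lies within angle $\theta_0$ of some $w\in\calW$, i.e.\ $y\in\calS^d_\beta(R,w)$, so the caps $\{\calS^d_\beta(R,w)\}_{w\in\calW}$ cover the whole sphere. Comparing surface areas gives $\abs{\calW}\ge \mathrm{Area}(\calS^d(R))/A_\beta$, where $A_\beta$ is the common area of one cap. Parametrizing the sphere by the polar angle $\phi$ from the cap center, both areas factor through the same radius factor $R^d$ and the same unit $(d-1)$-sphere factor $\mathrm{Area}(\calS^{d-1}(1))$, leaving a ratio of meridian integrals,
\begin{equation*}
\abs{\calW}\ge\frac{\int_0^\pi\sin^{d-1}\phi\,d\phi}{\int_0^{\theta_0}\sin^{d-1}\phi\,d\phi}.
\end{equation*}
The numerator is the standard Beta integral $\int_0^\pi\sin^{d-1}\phi\,d\phi=\sqrt\pi\,\Gamma(d/2)/\Gamma((d+1)/2)$, which already produces the Gamma quotient appearing in the statement.

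The crux is to bound the denominator so that the power $(d-1)/2$, rather than $d/2$, emerges. Substituting $s=\sin\phi$ and using $\cos\phi\ge\cos\theta_0$ on $[0,\theta_0]$ yields $\int_0^{\theta_0}\sin^{d-1}\phi\,d\phi\le \sin^d\theta_0/(d\cos\theta_0)$. I would then split $\sin^d\theta_0=\sin\theta_0\cdot\sin^{d-1}\theta_0$ and use $\sin^2\theta_0=2\beta/R-(\beta/R)^2\le 2\beta/R$ to get $\sin^{d-1}\theta_0\le(2\beta/R)^{(d-1)/2}$, while the remaining factor is $\tan\theta_0/d$. Here the hypothesis $\beta<(1-\tfrac{\sqrt2}{2})R$ enters decisively: it forces $\cos\theta_0>\tfrac{\sqrt2}{2}$, hence $\theta_0<\pi/4$ and $\tan\theta_0<1$, so that $\tan\theta_0/d\le 1/(d-1)$. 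This gives $\int_0^{\theta_0}\sin^{d-1}\phi\,d\phi\le(2\beta/R)^{(d-1)/2}/(d-1)$, and combining the two estimates produces $\abs{\calW}\ge(d-1)\sqrt\pi\,\tfrac{\Gamma(d/2)}{\Gamma((d+1)/2)}(R/2\beta)^{(d-1)/2}$. A final routine manipulation, using $\Gamma(d/2+1)=\tfrac{d}{2}\Gamma(d/2)$ and $\Gamma(d/2+3/2)=\tfrac{d+1}{2}\Gamma((d+1)/2)$ together with $d^2-1=(d-1)(d+1)$, rewrites this in the exact form claimed. I expect the main obstacle to be precisely this extraction of the sharp exponent: the naive estimate $\sin\phi\le\phi$ overshoots to $d/2$, so the argument must retain the $\cos\theta_0$ factor and lean on $\theta_0<\pi/4$ to trade one half-power of $2\beta/R$ for the constant $1/(d-1)$.
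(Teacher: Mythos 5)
Your proof is correct and follows essentially the same route as the paper's: a maximal packing whose caps must then cover the sphere, followed by an area-ratio bound in which the cap integral $\int_0^{\theta_0}\sin^{d-1}\phi\,\dif\phi$ is estimated by roughly $\sin^{d-1}\theta_0/(d-1)$ using the hypothesis $\theta_0<\pi/4$, exactly as the paper does with its $\cos\theta/\sin\theta>1$ trick. The only differences are cosmetic: you organize the computation through meridian integrals and the Beta function rather than the unit-ball volume constants $v_d$, and you make explicit the symmetry of cap membership ($y\in\calS^d_\beta(R,x)\iff\innerprod{x}{y}\ge R(R-\beta)$) that the paper's maximality argument uses implicitly.
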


Hereafter, we denote a set of points that satisfies Lemma~\ref{lemma:SphericalCap} as $\calW_\beta^d(R)\subset \calS^d(R)$.
Next we construct an $L$-Lipschitz convex function for any $L>0$, $\epsilon>0$ that satisfies certain properties on $\calW_{\epsilon/L}^d(R)$.
The proof is given in Section~\ref{app:proof:lemma:SphericalCapConvexFunction}.
\begin{lemma}\label{lemma:SphericalCapConvexFunction}
	Given positive constants $\epsilon>0, L>0$ and a set $\calW_{\epsilon/L}^d(R)$. Let $K\coloneqq\vert{\calW^d_{\epsilon/L}(R)}\vert$.
	For any values $v_k\in(\epsilon/2,\epsilon)$, $k=1,\dots,K$, 
	define a function $F:\calB^{d+1}(R)\to\R$ as $F(x)=\max_{k=1,\dots,K}\{0,v_k+\frac{L}{R}\innerprod{w_k}{x-w_k}\}$. Then $F$ satisfies the following properties:
	\begin{enumerate}
		\item $F$ is an $L$-Lipschitz convex function;
		\item $F(w_k)=v_k$ for all $w_k\in \calW^d_{\epsilon/L}(R)$;
		\item $F$ is differentiable at all $w_k$, with $v_k+\innerprod{\nabla F(w_k)}{w_l-w_k}< 0$ for all $l\neq k$; 
		\item For any $w_l\in\calW^d_{\epsilon/L}(R)$, $\ulQ_l(x)\coloneqq\max_{k\neq l}\{0,v_k+\innerprod{\nabla F(w_k)}{x-w_k}\}$ and $\olQ_l(x)\coloneqq\conv_{k\neq l}\{v_k+L\norm{x-w_k}\}$ satisfy
		$$\olQ_l(w_l)-\ulQ_l(w_l)>\frac{3\epsilon}{2}.$$
	\end{enumerate}
\end{lemma}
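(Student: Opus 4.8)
The plan is to handle the four claims in sequence, since the computation behind claims (2) and (3) drives the key estimate in claim (4). For claim (1), note that $F$ is the pointwise maximum of the affine functions $x\mapsto v_k+\frac{L}{R}\innerprod{w_k}{x-w_k}$ together with the constant $0$. A maximum of affine functions is convex, and each affine piece has Euclidean gradient $\frac{L}{R}w_k$ of norm $\frac{L}{R}\norm{w_k}_2=L$ because $w_k\in\calS^d(R)$; hence $F$ is $L$-Lipschitz (all norms and inner products here being Euclidean, consistent with the spherical geometry). The engine for the next two claims is one computation using the spherical-cap separation of $\calW^d_{\epsilon/L}(R)$: since $w_l\notin\calS^d_{\epsilon/L}(R,w_k)$ for $l\neq k$ by Lemma~\ref{lemma:SphericalCap}, we have $\innerprod{w_l-w_k}{w_k}<-\frac{\epsilon R}{L}$, i.e. $\innerprod{w_k}{w_l}<R^2-\frac{\epsilon R}{L}$. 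Substituting this into the $k$-th affine piece evaluated at $w_l$ gives $v_k+\frac{L}{R}\innerprod{w_k}{w_l-w_k}<v_k-\epsilon<0$, using $v_k<\epsilon$.

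This inequality immediately settles (2) and (3). At $x=w_l$ the $l$-th piece equals $v_l>0$ while every other affine piece and the constant $0$ are strictly dominated, so $F(w_l)=v_l$. Exchanging the roles of $k$ and $l$, in a neighborhood of $w_k$ only the $k$-th affine piece is active, so $F$ is differentiable there with $\nabla F(w_k)=\frac{L}{R}w_k$; the displayed inequality $v_k+\innerprod{\nabla F(w_k)}{w_l-w_k}<0$ is then exactly the strict negativity just established.

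Claim (4) is the substantive part and the main obstacle. The lower side is easy: by (3) each term $v_k+\innerprod{\nabla F(w_k)}{w_l-w_k}$ is negative, so $\ulQ_l(w_l)=0$. The difficulty is bounding the convex envelope $\olQ_l$ from below at $w_l$, since convexification can dig a valley. I would use the epigraph-convex-hull representation coming from the definition of $\conv$ in~\eqref{eq:ConvexHullFunctionOperation}, namely
\[
\olQ_l(w_l)=\inf\Bigl\{\textstyle\sum_{k\neq l}\theta_k\bigl(v_k+L\norm{y_k-w_k}\bigr):\ \sum_{k\neq l}\theta_k y_k=w_l,\ \theta_k\ge 0,\ \sum_{k\neq l}\theta_k=1\Bigr\},
\]
which is legitimate for finitely many continuous convex functions and has feasible points (e.g. all $y_k=w_l$), and where restricting $y_k$ to $\calB^{d+1}(R)$ only raises the infimum. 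For any feasible $(\theta,y)$, the triangle inequality gives $\sum_{k\neq l}\theta_k\norm{y_k-w_k}\ge\norm{w_l-\sum_{k\neq l}\theta_k w_k}\ge\dist\bigl(w_l,\conv\{w_k:k\neq l\}\bigr)$, while $\sum_{k\neq l}\theta_k v_k\ge\min_{k\neq l}v_k$.

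It remains to lower bound the distance. The hyperplane $\{x:\innerprod{x}{w_l}=R^2-\frac{\epsilon R}{L}\}$ separates $w_l$, where the value is $R^2$, from all of $\conv\{w_k:k\neq l\}$, since $\innerprod{w_k}{w_l}<R^2-\frac{\epsilon R}{L}$ for $k\neq l$; the Euclidean distance from $w_l$ to the corresponding halfspace is $\frac{\epsilon R/L}{\norm{w_l}_2}=\frac{\epsilon}{L}$, so $\dist(w_l,\conv\{w_k:k\neq l\})\ge\epsilon/L$. Combining, every feasible objective value is at least $\min_{k\neq l}v_k+L\cdot\frac{\epsilon}{L}>\frac{\epsilon}{2}+\epsilon=\frac{3\epsilon}{2}$, where the strict inequality uses $v_k>\epsilon/2$ over the finite index set. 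Therefore $\olQ_l(w_l)-\ulQ_l(w_l)=\olQ_l(w_l)>\frac{3\epsilon}{2}$. The only step demanding care is the infimum representation of the epigraph convex hull and the fact that the geometry forces $w_l$ a Euclidean distance $\epsilon/L$ from the hull of the remaining apexes; everything else is a direct consequence of the spherical-cap spacing of $\calW^d_{\epsilon/L}(R)$.
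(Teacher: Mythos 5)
Your proof is correct, and for claims (1)--(3) it coincides with the paper's argument: $F$ is a maximum of affine pieces with gradients $\frac{L}{R}w_k$ of norm $L$, and the spherical-cap separation $\innerprod{w_k}{w_l-w_k}<-\epsilon R/L$ makes every piece other than the $l$-th one strictly negative at $w_l$, giving $F(w_l)=v_l$, differentiability, and claim (3). For claim (4), however, you take a genuinely different --- and in fact more careful --- route than the paper. The paper's proof observes that $\norm{w_l-w_k}>\epsilon/L$ for all $k\neq l$ (via the Pythagorean theorem on the sphere), so each value $q_k:=v_k+L\norm{w_l-w_k}$ exceeds $3\epsilon/2$, and then concludes by asserting that $\olQ_l(w_l)$ ``is the convex combination of the $q_k$'s.'' That last step is loose: by the definition~\eqref{eq:ConvexHullFunctionOperation}, $\olQ_l(w_l)$ is an infimum of $\sum_{k\neq l}\theta_k\bigl(v_k+L\norm{y_k-w_k}\bigr)$ over configurations with $\sum_{k\neq l}\theta_k y_k=w_l$, where the points $y_k$ need not equal $w_l$; convexification can dip strictly below $\min_{k\neq l}q_k$ in general (it does so, for instance, at points lying inside the convex hull of the $w_k$'s), so the pairwise bounds alone do not suffice. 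Your argument supplies exactly the missing justification: the triangle inequality reduces the transport term to $L\,\dist\bigl(w_l,\conv\{w_k:k\neq l\}\bigr)$, and the hyperplane $\{x:\innerprod{x}{w_l}=R^2-\epsilon R/L\}$ separates $w_l$ from every $w_k$ with $k\neq l$, forcing that distance to exceed $\epsilon/L$; hence every feasible value exceeds $\min_{k\neq l}v_k+\epsilon>3\epsilon/2$, and the infimum itself is bounded below by a fixed quantity strictly larger than $3\epsilon/2$. So you prove the same bound, but where the paper leans on a one-line assertion about the convex hull function, you establish it from the hull's variational representation plus a separation argument, which closes a real gap in the published proof; the only cost is the extra bookkeeping of verifying the infimal-convolution representation and that the ambient closure does not lower the value, both of which you handle correctly since all functions involved are finite and $L$-Lipschitz.
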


Now we present the multistage convex dual dynamic programming example based on the following parameters:
$T\ge 2$ (number of stages), $L>0$ (Lipschitz constant), $d\ge3$ (state space dimension), $D=2R>0$ (state space diameter), and $\epsilon>0$ (optimality gap).
Choose any $L_1,\dots,L_T$ such that $L/2\le L_T< L_{T-1}<\cdots<L_1\le L$, and then construct finite sets $\calW_t\coloneqq\calW^{d-1}_{\epsilon/((T-1)L_{t+1})}(R)=\{w_{t,k}\}_{k=1}^{K_t}$, $K_t=\abs{\calW_t}$ as defined in Lemma~\ref{lemma:SphericalCap} for $t=1,\dots,T-1$.
Moreover, define convex $L_{t+1}$-Lipschitz continuous functions $F_t$ for some values $v_{t,k}\in(\epsilon/(2T-2),\epsilon/(T-1))$, $k=1,\dots,K_t$, and the finite sets $\calW_t$.
By Assumption~\ref{assum:StagewiseIndependence}, we define the stagewise independent scenario tree as follows.
There are $K_t$ distinct nodes in each stage $t=1,\dots,T-1$, which can be denoted by an index pair $n=(t,k)$ for $k=1,\dots,K_t$, and all nodes are defined by the same data in the last stage $T$.
Then we define our problem by specifying the nodal cost functions $f_{r}\equiv0$, $f_{1,k}(x_0,y_1,x_1):=L_1\nVert{x_1-w_{1,k}}$ for $k=1,\dots,K_1$, $f_{t,k}(x_{t-1},y_t,x_t):=F_{t-1}(x_{t-1})+L_t\nVert{x_t-w_{t,k}}$ for $k=1,\dots,K_t$ and $t=2,\dots,T-1$, and $f_{T,1}(x_{T-1},y_T,x_T):=F_{T-1}(x_{T-1})$, and state spaces $\calX_t=\calX=\calB^{d+1}(R)$.
Alternatively, the value functions can be written as
\begin{empheq}[left={\empheqlbrace}]{align}
& Q_{1,k} = \min_{x_1\in\calX}\left\{L_1\norm{x_1-w_{1,k}}+\calQ_1(x_1)\right\},\ \forall k\le K_1,\label{eq:SphericalCapExample}\\
& Q_{t,k}(x_{t-1}) = \min_{x_t\in\calX}\left\{F_{t-1}(x_{t-1})+L_t\norm{x_t-w_{t,k}}+\calQ_t(x_t)\right\},k\le K_t,\notag \\
& Q_{T,1}(x_{T-1}) = F_{T-1}(x_{T-1}),\notag
\end{empheq}
where the second equation is defined for all $2\le t\le T-1$, and the expected cost-to-go functions as
$$\calQ_t(x_t)\coloneqq\frac{1}{K_t}\sum_{k=1}^{K_t} Q_{t+1,k}(x_t),\quad t=0,\dots,T-1.$$
By Lemma~\ref{lemma:SphericalCap}, 
\begin{align*}
	K_t&\ge\frac{((d-1)^2-1)\sqrt{\pi}}{d-1}\frac{\Gamma((d-1)/2+1)}{\Gamma((d-1)/2+3/2)}\left(\frac{RL_t(T-1)}{2\epsilon}\right)^{(d-2)/2},\\
	&\ge\frac{d(d-2)\sqrt{\pi}}{d-1}\frac{\Gamma((d/2+1/2)}{\Gamma(d/2+1)}\left(\frac{DL(T-1)}{8\epsilon}\right)^{(d-2)/2}.
\end{align*}

Since for each value function $Q_{t,k}$ is $L_t$-Lipschitz continuous, we choose $\sigma_n=L_t$ with $\psi_n(x)=\norm{x}$ for any $n=(t,k)\in\tcN(t)$ and $t=1,\dots,T$ such that by Lemma~\ref{lemma:LipschitzOptimalValueFunctionApproximation} we have $Q_{t,k}(x)=Q_{t,k}^\Reg(x)$ for all $x\in\calX$.
Moreover, due to convexity, we set $l_{n,\rho}=0$ for all $n\in\calN$ and $l_{n,\lambda}=L_t$ for each $n\in\tcN(t)$ and $t=1,\dots,T$, i.e., the cuts are linear.
Following the argument of Proposition~\ref{prop:ConvexCutTightness}, we know that such linear cuts are capable of tight approximations. 
With such a choice of regularization we have the following theorem on the complexity of Algorithm~\ref{alg:DeterministicDualDP}.

\begin{theorem}\label{thm:ConvexLipschitzComplexity}
	For any optimality gap $\epsilon>0$, there exists a multistage stochastic convex  problem of the form~\eqref{eq:SphericalCapExample} such that, if Algorithm~\ref{alg:DeterministicDualDP} gives $\UB-\LB<\epsilon$ at $i$-th iteration, then
	\begin{align*}
	    i &> \frac{1}{3}\frac{d(d-2)\sqrt{\pi}}{d-1}\frac{\Gamma(d/2+1/2)}{\Gamma(d/2+1)}\left(\frac{DL(T-1)}{8\epsilon}\right)^{(d-2)/2}.
	\end{align*}
\end{theorem}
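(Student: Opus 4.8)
The plan is to argue by contraposition: I will show that as long as $i\le\tfrac13\min_tK_t$, the root gap obeys $\UB-\LB\ge\epsilon$, so that the stated termination condition $\UB-\LB<\epsilon$ forces $i>\tfrac13\min_tK_t$. Since Lemma~\ref{lemma:SphericalCap} together with $L_{t+1}\ge L/2$ and $R=D/2$ gives $K_t\ge\frac{d(d-2)\sqrt\pi}{d-1}\frac{\Gamma(d/2+1/2)}{\Gamma(d/2+1)}\left(\frac{DL(T-1)}{8\epsilon}\right)^{(d-2)/2}$ for every $t$, this is exactly the claimed bound.

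First I would record the structural reductions. A direct computation from~\eqref{eq:SphericalCapExample} gives $Q_{t,k}(x_{t-1})=F_{t-1}(x_{t-1})+c_{t,k}$ with $c_{t,k}$ constant, hence $\calQ_s=F_s+\mathrm{const}$ for $1\le s\le T-1$ while $\calQ_0$ is constant; and by Lemma~\ref{lemma:LipschitzOptimalValueFunctionApproximation} with $\psi_n=\norm{\cdot}$, $\sigma_n=L_t$ the regularization is exact, $Q^\Reg=Q$. Each $\ulcQ_s^{i-1}$, being assembled from tangent (linear) cuts of the $L_{s+1}$-Lipschitz convex function $F_s$, is itself $L_{s+1}$-Lipschitz; since $L_{s+1}<L_s$, the forward and backward subproblems $\min_x\{L_s\norm{x-w_{s,k}}+\ulcQ_s^{i-1}(x)\}$ have the \emph{unique} minimizer $x=w_{s,k}$. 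Thus every forward state is a center $w_{s,k}$, and the stagewise cuts for the $Q_{s+1,m}$ all share the same $x_s$-dependence $F_s$, so they combine into a single tangent of $F_s$ at that center per iteration; by Proposition~\ref{prop:ConvexCutTightness} it is exact there and, by part~3 of Lemma~\ref{lemma:SphericalCapConvexFunction}, strictly below $F_s$ at every other center. In particular at most one center of $F_s$ is cut per iteration, so after $i$ iterations at least $u_s\ge K_s-i$ centers remain uncut at each stage $s$.

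The core is a telescoping lower bound on the averaged stage gaps $G_s:=\frac1{K_s}\sum_k\bigl(\olcQ_s^i(w_{s,k})-\ulcQ_s^i(w_{s,k})\bigr)$; the root gap equals $G_1$ because $\calQ_0$ is constant and $\ubar{v}_{(1,k)}^{i}=\ulcQ_1^i(w_{1,k})$, $\bar{v}_{(1,k)}^{i}=\olcQ_1^i(w_{1,k})$. Writing $\bar C_{s+1}^i,\ubar C_{s+1}^i$ for the averaged over/under constants propagated from stage $s+1$, I would use that $\bar C_{s+1}^{i''}$ is nonincreasing and $\ubar C_{s+1}^{i''}$ nondecreasing in $i''$ to pull the best constant out of the $\conv$ and $\max$ in~\eqref{eq:OverApproximationCostToGo}--\eqref{eq:UnderApproximationCostToGo}, obtaining at any uncut center $w_{s,l}$ that $\olcQ_s^i(w_{s,l})\ge\olQ_l(w_{s,l})+\bar C_{s+1}^i$ and $\ulcQ_s^i(w_{s,l})\le\ubar C_{s+1}^i$ (and both with $F_s(w_{s,l})$ added at cut centers). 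Part~4 of Lemma~\ref{lemma:SphericalCapConvexFunction}, applied with $\epsilon$ replaced by $\epsilon/(T-1)$, gives $\olQ_l(w_{s,l})>\frac{3\epsilon}{2(T-1)}$. Combining yields $G_s\ge G_{s+1}+\frac{u_s}{K_s}\cdot\frac{3\epsilon}{2(T-1)}$, which unrolls (with $G_T=0$) to $G_1\ge\sum_{s=1}^{T-1}\frac{u_s}{K_s}\cdot\frac{3\epsilon}{2(T-1)}$.

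Finally, if $i\le\tfrac13\min_tK_t$ then $u_s/K_s\ge\tfrac23$ for every $s$, so $\UB-\LB=G_1\ge(T-1)\cdot\tfrac23\cdot\frac{3\epsilon}{2(T-1)}=\epsilon$, completing the contraposition. I expect the telescoping step to be the main obstacle: one must justify that the convex-hull over-approximation in~\eqref{eq:OverApproximationCostToGo} cannot collapse the propagated gaps, i.e.\ that the iteration-varying constants $\bar C_{s+1}^{i''}$ glued to each cone may be replaced by their best value $\bar C_{s+1}^i$ by monotonicity, and that part~4 of Lemma~\ref{lemma:SphericalCapConvexFunction}, stated for the full set of remaining centers, remains valid when only a subset has been cut — here the $\conv$ over fewer cones is pointwise larger, which only strengthens the bound.
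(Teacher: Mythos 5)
Your proposal is correct and follows essentially the same route as the paper's own proof: both arguments show that the forward solutions are uniquely the centers $w_{t,k}$ (via $L_{t+1}<L_t$ and the Lipschitz bound on $\ulcQ_t^i$), decompose every cut into an approximation of $F_s$ plus a propagated constant that can be replaced by its iteration-$i$ value by monotonicity, invoke parts 3--4 of Lemma~\ref{lemma:SphericalCapConvexFunction} at the unsampled (uncut) centers, and telescope the averaged per-stage gaps using the fact that at most one center is cut per iteration, so $i\le\tfrac13\min_t K_t$ forces a root gap of at least $\epsilon$. Your closing observations (that averaged stage-$1$ constants give the root gap, and that taking $\conv$/$\max$ over only the sampled subset of centers only widens the gap) are exactly the points the paper's proof also relies on, so there is no gap to fill.
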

The proof is given in Section~\ref{app:proof:thm:ConvexLipschitzComplexity}.
The theorem implies that, even if problem~\eqref{eq:NodalProblem} is convex and has Lipschitz continuous value functions, the minimum iteration for Algorithm~\ref{alg:DeterministicDualDP} to get a guaranteed $\epsilon$-optimal root node solution grows as a polynomial of the ratio $T/\epsilon$, with the degree being $d/2-1$.

We remark that Theorems~\ref{thm:GeneralLipschitzComplexity} and \ref{thm:ConvexLipschitzComplexity} correspond to two different challenges of the SDDP type algorithms.
The first challenge is that the backward step subproblem oracle may not give cuts that provide the desired approximation, which could happen when the value functions are nonconvex or nonsmooth.
Theorem~\ref{thm:GeneralLipschitzComplexity} results from the worst case that the backward step subproblem oracle leads to approximations of the value function in the smallest neighborhood.

The second challenge is that different nodes, or more generally, different scenario paths give different states in each stage, so sampling and solving the nodal problem on one scenario path provides little information to the nodal problem on another scenario path.
In example~\eqref{eq:SphericalCapExample}, the linear cut obtained in each iteration does not provide any information on the subsequent iteration states (unless the same node is sampled again).
From this perspective, we believe that unless some special structure of the problem is exploited, any algorithm that relies on local approximation of value functions will face the ``curse of dimensionality,''
i.e., the exponential growth rate of the iteration complexity in the state space dimensions.

\section{Conclusions.}
\label{sec:Concluding}
In this paper, we propose three algorithms in a unified framework of dual dynamic programming for solving multistage stochastic mixed-integer nonlinear programs. The first algorithm is a generalization of the classic nested Benders decomposition algorithm, which deals with general scenario trees without the stagewise independence property. The second and third algorithms generalize SDDP with sampling procedures on a stagewise independent scenario tree, where the second algorithm uses a deterministic sampling approach, and the third one uses a randomized sampling approach. 
The proposed algorithms are built on regularization of value functions, which enables them to handle problems with value functions that are non-Lipschitzian or discontinuous. We show that the regularized problem preserves the feasibility and optimality of the original multistage program, when the corresponding penalty reformulation satisfies exact penalization. 
The key ingredient of the proposed algorithms is a new class of cuts based on generalized conjugacy for approximating nonconvex cost-to-go functions of the regularized problems.

We obtain upper and lower bounds on the iteration complexity of the proposed algorithms on MS-MINLP problem classes that allow exact Lipschitz regularization with predetermined penalty functions and parameters.
The complexity analysis is new and deepens our understanding of the behavior of SDDP. For example, it is the first time to prove that the iteration complexity of SDDP depends polynomially on the number of stages, not exponentially, for both convex and nonconvex multistage stochastic programs, and this complexity dependence can be reduced to linear if the optimality gap is allowed to scale linearly with the number of stages, or if all the state spaces are finite sets. These findings resolve a conjecture of the late Prof. Shabbir Ahmed, who inspired us to work on this problem.

\bibliographystyle{spmpsci}      
\bibliography{ref_dp}

\newpage

\appendix
\section{Proofs.}

In this section, we present the proofs to the theorems, propositions, and lemmas that are not displayed in the main text.

\subsection{Proofs for Statements in Section~\ref{sec:ProblemFormulations}}

\subsubsection{Proof for Proposition~\ref{prop:OptimalValueFunctionProperties}}
\label{app:proof:prop:OptimalValueFunctionProperties}
\begin{proof}
	We show that $Q_n$ is l.s.c.\ by showing the lower level sets $\lev_a (Q_n)=\{z\in\calX_{a(n)}:Q_n(z)\le a\}$ are closed for all $a\in\R$. 
	At any leaf node $n$, the expected cost-to-go function $\cQ_n(x_n)$ is zero, thus $z$ is in $\lev_a(Q_n)$ if and only if $z$ is in the projection of the following set $\{(z,y,x) : (x,y)\in\cF_n, \, f_n(z,y,x)\le a\}$.
	Since $f_n$ is defined on a compact set $\{(z,y,x):z\in\calX_{a(n)},(x,y)\in\calF_n\}$ and l.s.c.\ by Assumption \ref{assum:MinCondition}, we know that the set $\{(z,y,x):f_n(z,y,x)\le a\}$ is compact.
	Moreover, since the projection $(z,y,x)\mapsto z$ is continuous, the image $\lev_a(Q_n)$ is still compact, hence closed.\\
	At any non-leaf node $n$, suppose $Q_m$ is l.s.c.\ for all its child nodes $m\in\calC(n)$.
	Then, $\calQ_n$ is l.s.c.\ since $\calQ_n$ is defined in \eqref{eq:DefinitionCostToGo} and $p_{nm}>0$ for all $m$.
	A point $z\in\lev_a(Q_n)$ if and only if $z$ is in the projection of the set $\{(z,y,x) : (y,x)\in\calF_n, \, f_n(z,y,x)+\calQ_n(x)\le a\}$.
	Similarly, this shows $\lev_a Q_n$ is closed since $f_n, \calQ_n$ are l.s.c.\ and the projection $(z,y,x)\mapsto z$ is continuous.
	We thus conclude $Q_n$ is l.s.c.\ for every node $n$ in the scenario tree.
	
	To show claims 1 and 2 in the proposition, take any two points $z_1,z_2\in \calX_{a(n)}$. 
	Suppose $(x_1,y_1),\,(x_2,y_2)\in\calF_n$ are the corresponding minimizers in the definition~\eqref{eq:NodalProblem}.
	Therefore, $Q_n(z_1)=f_n(z_1,y_1,x_1)+\calQ_n(x_1)$ and $Q_n(z_2)=f_n(z_2,y_2,x_2)+\calQ_n(x_2)$.
	If $f_n$ is Lipschitz continuous in the first variable, then we have
	\begin{align*}
		Q_{n}(z_1)-Q_{n}(z_2)&=f_n(z_1,y_1,x_1)+\calQ_n(x_1)-f_n(z_2,y_2,x_2)-\calQ_n(x_2)\\
		&\le f_n(z_1,y_2,x_2)+\calQ_n(x_2)-f_n(z_2,y_2,x_2)-\calQ_n(x_2)\\
		&\le f_n(z_1,y_2,x_2)-f_n(z_2,y_2,x_2)\le l_n\norm{z_1-z_2}.
	\end{align*}
	Likewise, by exchanging $z_1$ and $z_2$, we know that $Q_n(z_2)-Q_n(z_1)\le l_n\norm{z_1-z_2}$.
	This proves that $Q_n$ is Lipschitz continuous with the constant $l_n$.\\
	To show that $Q_n$ is convex, take any $t\in[0,1]$.
	Since $\calX_{a(n)}$ is convex, $Q_n$ is defined at $t z_1+(1-t)z_2$.
	Thus,
	\begin{align*}
		&Q_n(t z_1+(1-t)z_2)\\
		&\le f_n(t z_1+(1-t)z_2,t y_1+(1-t)y_2,t x_1+(1-t)x_2)+\calQ_n(t x_1+(1-t)x_2)\\
		&\le t f_n(z_1,y_1,x_1)+(1-t)f_n(z_2,y_2,x_2)+t\calQ_n(x_1)+(1-t)\calQ_n(x_2)\\
		&=t Q_n(z_1)+(1-t)Q_n(z_2).
	\end{align*}
	The first inequality follows from the definition~\eqref{eq:NodalProblem}, while the second inequality follows from the convexity of $f_n$ and $\calQ_n$.
	This shows $Q_n$ is convex.
	\qed
\end{proof}

\subsubsection{Proof for Proposition~\ref{prop:InfConvolutionLipschitz}}
\label{app:proof:prop:InfConvolutionLipschitz}
\begin{proof}
	First we show that the partial inf-convolution 
	$$f_n\square(\sigma_n\psi_n)(x_{a(n)},y_n,x_n)\coloneqq\min_{z\in\calX_{a(n)}}f_n(z_n,y_n,x_n)+\sigma_n\psi_n(x_{a(n)}-z_n)$$ 
	is  $\sigma_n$-Lipschitz continuous in the first variable $x_{a(n)}$.
    Note that the minimum is well-defined since $\calX_{a(n)}$ is compact and the functions $f_n,\sigma_n\psi_n$ are l.s.c..
    Besides, since $z=x_{a(n)}$ is a feasible solution in the minimization, we know that $f_n\square(\sigma_n\psi_n)(x_{a(n)},y_n,x_n)\le f_n(x_{a(n)},y_n,x_n)$ for all $x_{a(n)}\in\calX_{a(n)}$ and $(x_n,y_n)\in\calF_n$.
    Pick any $x_1,x_2\in\calX_{a(n)}$, $(x,y)\in\calF_n$, and let $z_1,z_2\in\calX_{a(n)}$ be the corresponding minimizers in the definition of $f_n\square(\sigma_n\psi_n)(x_1,y,x)$ and $f_n\square(\sigma_n\psi_n)(x_2,y,x)$, respectively.
	By definition,
	\begin{align*}
		& f_n\square(\sigma_n\psi_n)(x_1,y,x)-f_n\square(\sigma_n\psi_n)(x_2,y,x)\\
		&=f_n(z_1,y,x)+\psi(x_1-z_1)-f_n(z_2,y,x)-\psi(x_2-z_2)\\
		&\le f_n(z_2,y,x)+\psi(x_1-z_2)-f_n(z_2,y,x)-\psi(x_2-z_2)
		\le \sigma_n\norm{x_1-x_2}.
	\end{align*}
	Similarly, we can get $f_n\square(\sigma_n\psi_n)(x_2)-f_n\square(\sigma_n\psi_n)(x_1)\le \sigma_n\norm{x_1-x_2}$ by exchanging $x_1,x_2$ and $z_1,z_2$ in the above inequality.
	Therefore, $f_n\square(\sigma_n\psi_n)$ is $\sigma_n$-Lipschitz continuous in the first variable $x_{a(n)}$.\\
	The regularized problem~\eqref{eq:RegularizedNodalProblem} can be viewed as replacing the nodal objective function $f_n$ with the inf-convolution $f_n\square(\sigma_n\psi_n)$.
	Then by Proposition~\ref{prop:OptimalValueFunctionProperties}, $Q_n^\Reg(x)$ is $\sigma_n$-Lipschitz continuous on $\calX_{a(n)}$.
	Moreover, if the original problem~\eqref{eq:NodalProblem} is convex and $\psi_n$ are convex penalty functions, then $f_n\square(\sigma_n\psi_n)$ is also convex.
	Proposition~\ref{prop:OptimalValueFunctionProperties} ensures $Q_n^\Reg(x)$ is also convex on $\calX_{a(n)}$.
	\qed
\end{proof}

\subsubsection{Proof for Lemma~\ref{lemma:RegOptimalValueFunction}}
\label{app:proof:lemma:RegOptimalValueFunction}
\begin{proof}
	By definition, we have $Q_n^\Reg(x_n)\le Q_n(x_n)$ for all $n\in\calN$, $n\neq r$.
	We show the other direction by contradiction.
	Suppose there exists a node $n\in\calN$ such that $Q_n^\Reg(x_{a(n)})<Q_n(x_{a(n)})$.
	By definition, there exist $z'_m\in\calX_{a(m)}$ and $(x'_m,y'_m)\in\calF_m$ for all nodes in the subtree $m\in\calT(n)$, such that 
	$$Q_n^\Reg(x_{a(n)})=\frac{1}{p_n}\sum_{m\in\calT(n)}p_m\left[f_m(z'_m,y'_m,x'_m)+\sigma_m\psi_m({x'_{a(m)}-z'_m})\right].$$
	We can extend $(x'_m,y'_m,z'_m)_{m\in\calT(n)}$ to a feasible solution $(z'_m,y'_m,x'_m)_{m\in\calN}$ of the regularized problem by setting $z'_m=x_{a(m)}$, $y'_m=y_m$, and $x'_m=x_m$ for all $m\notin\calT(n)$.
	Thus
	\begin{align*}
		v^\reg&\le \sum_{m\in\calT(n)}p_mf_m(z'_m,y'_m,x'_m)+\sum_{m\notin\calT(n)}p_mf_m(z'_m,y'_m,x'_m)\\
		&= p_nQ_n^\Reg(x_{a(n)})+\sum_{m\notin\calT(n)}p_mf_m(x_{a(m)},y_m,x_m)\\
		&<p_nQ_n(x_{a(n)})+\sum_{m\notin\calT(n)}p_mf_m(x_{a(m)},y_m,x_m)\\
		&=\sum_{m\in\calT(n)}p_mf_m(x_{a(m)},y_m,x_m)+\sum_{m\notin\calT(n)}p_mf_m(x_{a(m)},y_m,x_m)=v^\primal.
	\end{align*}	
	This leads to a contradiction with the assumption that $v^\reg=v^\primal$.
	Therefore, we conclude that $Q_n^\Reg(x_{a(n)})=Q_n(x_{a(n)})$ for all $n\in\calN$, $n\neq r$.
	\qed
\end{proof}

\subsubsection{Proof for Proposition~\ref{prop:GeneralizedCutTightness}}
\label{app:proof:prop:GeneralizedCutTightness}
\begin{proof}
If $l_{n,\rho}\ge\sigma_n$, then $(\lambda,\rho)=(0,\sigma_n)$ is contained in $\calU_n$, and therefore, is a dual feasible solution for \eqref{eq:CutGenerationDualProblem}. Thus, we have
\begin{align*}
	Q_n(\bar{x}_n)\ge C_n^{\Phi_n^{\bar{x}_n}}(\bar{x}_n\given\hat{\lambda}_n,\hat\rho_n,\hat{v}_n)&=\hat{v}_n\ge\min_{z\in\calX_{a(n)}}\{Q_n(z)+\sigma_n\psi_n(\bar{x}_n-z)\}=Q_n^\Reg(\bar{x}_n)=Q_n(\bar{x}_n),
\end{align*}
where the first inequality is the validity of the generalized conjugacy cut \eqref{eq:GeneralizedConjugacyCutValidness} and the second and the last equality are due to  Lemma~\ref{lemma:RegOptimalValueFunction} for $(\bar{x}_n,\bar{y}_n)_{n\in\calN}$ being an optimal solution to problem~\eqref{eq:ExtensiveForm}. This completes the proof.
\qed
\end{proof}

\subsubsection{Proof for Lemma~\ref{lemma:ConvexCutTightness}}
\label{app:proof:lemma:ConvexCutTightness}
\begin{proof}
	The minimums in \eqref{eq:ConvexCutInfConv} are well-defined because of the compactness of $\calX$ and lower semicontinuity of $Q$.
	Take any $x\in\calX$.
	Since both the primal set $\calX$ and the dual set $\{\lambda\in\R^d:\norm{\lambda}_*\le\sigma\}$ are bounded, by strong duality (cf. Theorem 3.1.30 in \cite{nesterov2018lectures}), we have
	\begin{align*}
		\max_{\norm{\lambda}_*\le\sigma}\min_{z\in\calX}\{Q(z)+\innerprod{\lambda}{x-z}\}&=\min_{z\in\calX}\max_{\norm{\lambda}_*\le\sigma}\{Q(z)+\innerprod{\lambda}{x-z}\}
		=\min_{z\in\calX}\{Q(z)+\sigma\norm{x-z}\},
	\end{align*}
	which completes the proof.
	\qed
\end{proof}

\subsubsection{Proof for Proposition~\ref{prop:ConvexCutTightness}}
\label{app:proof:prop:ConvexCutTightness}
\begin{proof}
    By definition, 
    $Q_n^\Reg(x)\le Q_n\square(\sigma_n\psi_n)(x)$.
    Since $\psi_n(x)=\norm{x}$ is convex, by Proposition~\ref{prop:InfConvolutionLipschitz}, $Q_n^\Reg(x)$ is convex.
	Then by Lemma~\ref{lemma:ConvexCutTightness}, we have
    $$Q_n^\Reg(x)=\max_{\norm{\lambda}_*\le\sigma_n}\min_{z\in\calX_{a(n)}}\left\{Q_n(z)+\innerprod{\lambda}{x-z}\right\}.$$
    Therefore,
    \begin{align*}
    	C_n^{\Phi_n^{\bar{x}_n}}(\bar{x}_n\given\hat{\lambda}_n,\hat\rho_n,\hat{v}_n)=\hat{v}_n &=\max_{\norm{\lambda}_*\le l_{n,\lambda}}\min_{z\in\calX_{a(n)}}\left\{Q_n(z)+\innerprod{\lambda}{\bar{x}_n-z}\right\}\\
    	&\ge \max_{\norm{\lambda}_*\le \sigma_n}\min_{z\in\calX_{a(n)}}\left\{Q_n(z)+\innerprod{\lambda}{\bar{x}_n-z}\right\} \ge Q_n^\Reg(\bar{x}_n).
    \end{align*}
    By Lemma~\ref{lemma:RegOptimalValueFunction}, $Q_n^\Reg(\bar{x}_n)=Q_n(\bar{x}_n)$ if $(\bar{x}_n,\bar{y}_n)_{n\in\calN}$ is an optimal solution to problem~\eqref{eq:ExtensiveForm}.
    Therefore, we conclude that $C_n^{\Phi_n^{\bar{x}_n}}(\bar{x}_n\given\hat\lambda_n,\hat\rho_n,\hat{v}_n)=Q_n(\bar{x}_n)$ due to the validness of $C_n^{\Phi_n^{\bar{x}_n}}$ by \eqref{eq:GeneralizedConjugacyCutValidness}.
    \qed
\end{proof}

\subsection{Proofs for Statements in Section~\ref{sec:DDPAlgorithms}}

\subsubsection{Proof for Proposition~\ref{prop:UnderApproximationValidness}}
\label{app:proof:prop:UnderApproximationValidness}
\begin{proof}
	Let $L_n:=\sum_{m\in\calC(n)}p_{nm}(l_{m,\lambda}+l_{m,\rho})$ for simplicity.
	We prove the proposition recursively for nodes $n\in\calN$, and inductively for iteration indices $i\in\N$.
	For leaf nodes and the first iteration, it holds obviously because $\ulcQ_n^i(x)=0$ for any leaf node $n\in\calN$ with $\calC(n)=\varnothing$, and $\ulcQ_n^0(x)=0$ from the definition~\eqref{eq:UnderApproximationCostToGo}.
	Now suppose for some $n\in\calN$, and $i\in\N$, it holds for all $m\in\calC(n)$ that $\ulcQ_m^i(x)\le \calQ_m(x)$, $\ulcQ_n^{i-1}(x)\le \calQ_n(x)$, and that $\ulcQ_n^{i-1}(x)$ is $L_n$-Lipschitz continuous. 
	Then it follows from~\eqref{eq:CutGeneration}, \eqref{eq:UnderApproximationUpdateFormula}, and \eqref{eq:NonRootBackwardOracleProblem} that $C_m^i(x\given\hat\lambda_m^i,\hat\rho_m^i,\ubar{v}_m^i)\le Q_m(x)$ for all $m\in\calC(n)$.
	By~\eqref{eq:UnderApproximationCut}, $C_m^i$ is $(l_{m,\lambda}+l_{m,\rho})$-Lipschitz continuous so $\sum_{m\in\calC(n)}p_{nm}C_m^i$ is $L_n$-Lipschitz continuous.
	Thus the pointwise maximum of $\ulcQ_n^{i-1}(x)$ and $\sum_{m\in\calC(n)}p_{nm}C_m^i(x\given\hat\lambda_n^i,\hat\rho_n^i,\ubar{v}_n^i)$ (cf.~\eqref{eq:UnderApproximationCostToGo}) is still dominated by $\calQ_n(x)$ and $L_n$-Lipschitz continuous.
\qed
\end{proof}

\subsubsection{Proof for Proposition~\ref{prop:OverApproximationValidness}}
\label{app:proof:prop:OverApproximationValidness}
\begin{proof}
	Let $L_n=\sum_{m\in\calC(n)}p_{nm}\sigma_m$ for simplicity in this proof.
	We prove the statement by induction on the number of iterations $i$. 
	When $i=1$, $\olcQ_n^1(x)=\min\{+\infty,\sum_{m\in\calC(n)}p_{nm}(\bar{v}_m^i+\sigma_m\norm{x-x_n^i})\}=\sum_{m\in\calC(n)}p_{nm}(\bar{v}_m^i+\sigma_m\norm{x-x_n^i})$ which is clearly $L_n$-Lipschitz continuous.
	For any leaf node $n$, $\olcQ_n^1\equiv0=\calQ_n^\Reg$ by definition.
	Going recursively from leaf nodes to the root node, suppose $\olcQ_m^1\ge \calQ_m^\Reg$ for all $m\in\calC(n)$ for some node $n$, then we have
	\begin{align}
	    \bar{v}_m^1 &= f_m({z}_m^1,{y}_m^1,{x}_m^1)+\sigma_m\psi_m(x_{n}^1-{z}_m^1)+\olcQ_m^{1}({x}_m^1)\label{eq:OverApproximationValidnessDerivation}\\
	    &\ge \min\{f_m(z,y,x)+\sigma_m\psi_m(x_{n}^1-z)+\olcQ_m^{1}(x):(x,y)\in\calF_m,z\in\calX_n\}\notag \\
	    &\ge \min\{f_m(z,y,x)+\sigma_m\psi_m(x_{n}^1-z)+\calQ_m^\Reg(x):(x,y)\in\calF_m,z\in\calX_n\}= Q_m^\Reg(x_n^1). \notag
	\end{align}
	Thus $\olcQ_n^1(x)=\sum_{m\in\calC(n)}p_{nm}(\bar{v}_m^1+\sigma_m\norm{x-x_n^1})\ge\calQ_n^\Reg(x)$ for all $x\in\calX_n$ by the $\sigma_m$-Lipschitz continuity of the regularized value functions $Q_m^\Reg(x)$ for all $m\in\calC(n)$ shown in Proposition~\ref{prop:InfConvolutionLipschitz}.
	
	Now assume that the statement holds for all iterations up to $i-1$.
	For any leaf node $n$, $\olcQ_n^i\equiv0=\calQ_n^\Reg$ still holds by definition.
	For any non-leaf node $n$, suppose $\olcQ_m^i\ge \calQ_m^\Reg$ for all $m\in\calC(n)$.
	Then by the same argument~\eqref{eq:OverApproximationValidnessDerivation}, we know that $\bar{v}_m^i\ge Q_m^\Reg(x_n^i)$.
	By induction hypothesis, $\olcQ_n^{i-1}(x)\ge\calQ_n^\Reg(x)$ for all $x\in\calX_n$. 
	So for the cases without convexity, $\olcQ_n^i(x)=\min\{\olcQ_n^{i-1}(x),\sum_{m\in\calC(n)}p_{nm}(\bar{v}_m^i+\sigma_m\norm{x-x_n^i})\}$ is $L_n$-Lipschitz continuous and satisfies $\olcQ_n^{i}(x)\ge\calQ_n^\Reg(x)$ since $\sum_{m\in\calC(n)}p_{nm}(\bar{v}_m^i+\sigma_m\norm{x-x_n^i})\ge\calQ_n^\Reg(x)$ for all $x\in\calX_n$ following Proposition~\ref{prop:InfConvolutionLipschitz}.
	
	It remains to show that in the convex case $\olcQ_n^i(x)$ is still $L_n$-Lipschitz continuous and satisfies $\olcQ_n^i(x)\ge\calQ_n^\Reg(x)$ for any $x\in\calX_n$. Note that $\calQ_n^\Reg(x)$ can be naturally extended to the entire space $\bbR^{d_n}\supset\calX_n$ since $\calQ_n^\Reg\square(L_n\norm{\cdot})(x)=\calQ_n^\Reg(x)$ for any $x\in\calX_n$ by the $L_n$-Lipschitz continuity of $\calQ_n^\Reg$. The above argument of the base case $i=1$ for the nonconvex case can be directly applied to the convex case over $x\in\R^{d_n}$. Now assume that $\olcQ_n^{i-1}(x)$ is $L_n$-Lipschitz continuous on $\R^{d_n}$ and $\olcQ_n^{i-1}(x)\ge \calQ_n^R(x)$ for $x\in\R^{d_n}$ up to $i-1$.
	Since $Q'_n(x):=\min\{\olcQ_n^{i-1}(x),\sum_{m\in\calC(n)}p_{nm}(\bar{v}_m^i+\sigma_m\norm{x-x_n^i})\}$ is $L_n$-Lipschitz continuous in $x\in\R^{d_n}$, we claim that the supremum in the definition~\eqref{eq:ConvexHullFunctionOperation} can be attained within the dual norm ball $\calB_*(L_n):=\{\lambda\in\bbR^{d_n}:\norm{\lambda}_*\le L_n\}$.
	In fact, for any $\lambda\notin\calB_*(L_n)$, the infimum 
	$$\inf_{z\in\bbR^{d_n}}\{Q'_n(z)+\innerprod{\lambda}{z-x}\}\le\inf_{z\in\bbR^{d_n}}\{Q'_n(x)+L_n\norm{z-x}+\innerprod{\lambda}{z-x}\}=-\infty.$$
	As a result, $\olcQ_n^i(x)$ is a supremum of $L_n$-Lipschitz linear functions (of the forms $l(x):=\olcQ_n^i(\hat{z})+\langle\hat{\lambda},\hat{z}-x\rangle$ where $\hat{\lambda}\in\calB_*(L_n)$ and $\hat{z}\in\calX_n$) and thus is also an $L_n$-Lipschitz continuous function.
	Therefore, $Q'_n(x)\ge\calQ_n^\Reg(x)$ for all $x\in\bbR^{d_n}$. By \eqref{eq:ConvexHullFunctionOperation},  $\olcQ_n^i(x)=(Q'_n)^{**}(x)\ge(\calQ_n^\Reg)^{**}(x)=\calQ_n^\Reg(x)$ for all $x\in\R^{d_n}$.
	This completes the proof.
	\qed
\end{proof}

\subsubsection{Proof for Proposition~\ref{prop:RootNodeOptimality}}
\label{app:proof:prop:RootNodeOptimality}
\begin{proof}
	From the definition of $v^\reg$ and Proposition~\ref{prop:OverApproximationValidness},
	\begin{align*}
		v^\reg&\le f_r(x_{a(r)},y_r^*,x_r^*)+\calQ^\Reg_r(x_r^*)\le f_r(x_{a(r)},y_r^*,x_r^*)+\olcQ^i_r(x_r^*)\le\UB.
	\end{align*}
	Since $\UB-\LB\le\epsilon$, we have
	\begin{equation*}
		f_r(x_{a(r)},y_r^*,x_r^*)+\olcQ^i_r(x_r^*)\le f_r(x_{a(r)},y_r^{i+1},x_r^{i+1})+\ulcQ_r^i(x_r^{i+1})+\epsilon.
	\end{equation*}
	Then, using the optimality of $(x_r^{i+1},y_r^{i+1})$ given by $\scrO_r(\ulcQ_r^i)$ and the fact that $\ulcQ_r^i(x)\le \calQ_r(x)$, we see that
	\begin{equation*}
		f_r(x_{a(r)},y_r^{i+1},x_r^{i+1})+\ulcQ^i_r(x_r^{i+1})\le \min_{(x,y)\in\calF_r}\left\{f_r(x_{a(r)},y,x)+\calQ_r(x)\right\}=v^\primal.
	\end{equation*}
	Under Assumption~\ref{assum:ExactPenalty}, $v^\reg=v^\primal$.
	Therefore, combining all the above inequalities, we have shown that
	\begin{equation*}
		v^\reg\le f_r(x_{a(r)},y_r^*,x_r^*)+\olcQ^i_r(x_r^*)\le v^\reg+\epsilon,
	\end{equation*}
	which means $(x_r^*,y_r^*)$ is an $\epsilon$-optimal root node solution to the regularized problem~\eqref{eq:RegularizedNodalProblem}. 
	Now suppose $\olcQ_r^i(x_r^{i+1})-\ulcQ_r^i(x_r^{i+1})\le\epsilon$ for some iteration index $i$.
	Note that $\UB\le f_r(x_{a(r)},y_r^{i+1},x_r^{i+1})+\olcQ_r^i(x_r^{i+1})$, we have 
	\begin{align*}
		&\UB-\LB\\
		&\le f_r(x_{a(r)},y_r^{i+1},x_r^{i+1}) + \olcQ_r^i(x_r^{i+1})  - (f_r(x_{a(r)},y_r^{i+1},x_r^{i+1}) + \ulcQ_r^i(x_r^{i+1}))\\
		& = \olcQ_r^i(x_r^{i+1})-\ulcQ_r^i(x_r^{i+1})
		 \le\epsilon.
	\end{align*}
	Therefore the algorithm terminates after the $i$-th iteration.
	\qed
\end{proof}

\subsection{Proofs for Statements in Section~\ref{sec:ComplexityUpperBounds}}
\subsubsection{Proof for Lemma~\ref{lemma:NeighborhoodApproximation}}
\label{app:proof:lemma:NeighborhoodApproximation}
\begin{proof}
	By definition~\eqref{eq:UnderApproximationCostToGo}, $\ulcQ_m^i(x)\ge\ulcQ_m^{i-1}(x)$ on $\calX_m$ for all $m\in\calC(n)$.
        If the problem is convex with \(\psi_n=\norm{\cdot}\), then by Lemma~\ref{lemma:ConvexCutTightness}, we have
	\begin{align*}
	  \ubar{v}_m^i & = \max_{\norm{\lambda}_*\le l_{m,\lambda}}\min_{\substack{(x,y)\in\calF_m,\\z\in\calX_n}}\left\{f_m(z,y,x)+\bangle{\lambda}{x_n^i-z}+\ulcQ_m^i(x)\right\}\\
                       & = \min_{\substack{(x,y)\in\calF_m,\\z\in\calX_n}}\left\{f_m(z,y,x)+l_{n,\lambda}\nVert{x_n^i-z}+\ulcQ_m^i(x)\right\}\\
                       & \ge \min_{\substack{(x,y)\in\calF_m,\\z\in\calX_n}}\left\{f_m(z,y,x)+\sigma_m\nVert{x_n^i-z}+\ulcQ_m^i(x)\right\}
        \end{align*}
        as \(l_{m,\lambda}\ge \sigma_m\) in Definition~\ref{def:NonrootBackwardStepOracle}.
	Otherwise, by definition~\eqref{eq:UnderApproximationUpdateFormula} and the fact that $(\lambda,\rho)=(0,\sigma_n)$ is a dual feasible solution for the problem~\eqref{eq:NonRootBackwardOracleProblem},  we have
	\begin{align*}
	  \ubar{v}_m^i & = \max_{(\lambda,\rho)\in\calU_m}\min_{\substack{(x,y)\in\calF_m,\\z\in\calX_n}}\left\{f_m(z,y,x)+\bangle{\lambda}{x_n^i-z}+\rho\psi_n(x_n^i-z)+\ulcQ_m^i(x)\right\}\\
	  & \ge \min_{\substack{(x,y)\in\calF_m,\\z\in\calX_n}}\left\{f_m(z,y,x)+\sigma_n\psi_n(x_n^i-z)+\ulcQ_m^i(x)\right\}.
        \end{align*}
        Thus in both cases, we have
        \begin{align*}
	 \ubar{v}_m^i & \ge \min_{\substack{(x,y)\in\calF_m,\\z\in\calX_n}}\left\{f_m(z,y,x)+\sigma_n\psi_n(x_n^i-z)+\ulcQ_m^{i-1}(x)\right\}\\
	  &= f_m(z_m^i,y_m^i,x_m^i)+\sigma_n\psi_n(x_n^i-z_m^i)+\ulcQ_m^{i-1}(x_m^i)
	\end{align*}
	for all $m\in\calC(n)$. 
        The last equality is due to the forward step subproblem oracle $\scrO_m^\Fwd(x_n^i,\ulcQ_m^{i-1})$ in the algorithm.
	Meanwhile, note that $\olcQ_m^i(x)\le\olcQ_m^{i-1}(x)$ for $x\in\calX_m$. By definition~\eqref{eq:OverApproximationUpdateFormula}, we have
	\begin{align*}
	    \bar{v}_m^i & = f_m(z_m^i, y_m^i, x_m^i) + \sigma_n \psi_n(x_n^i - z_m^i) +  \olcQ_m^{i}(x_m^i)\\
	    &\le f_m(z_m^i, y_m^i, x_m^i) + \sigma_n \psi_n(x_n^i - z_m^i) +  \olcQ_m^{i-1}(x_m^i)
	\end{align*} 
	for all $m\in\calC(n)$.
	Note that by definition~\eqref{eq:OverApproximationCostToGo}, $\olcQ_n^i(x_n^i)\le \sum_{m\in\calC(n)}p_{nm}\bar{v}_m^i$ and by definitions~\eqref{eq:UnderApproximationCostToGo} and \eqref{eq:UnderApproximationCut}, $\ulcQ_n^i(x_n^i)\ge \sum_{m\in\calC(n)}p_{nm}C_m^i(x_n^i\given\hat\lambda_m^i,\hat\rho_m^i,\ubar{v}_m^i)=\sum_{m\in\calC(n)}p_{nm}\ubar{v}_m^i$.
	Therefore, 
	\begin{align*}
		\olcQ_n^i(x_n^i)-\ulcQ_n^i(x_n^i)&\le\sum_{m\in\calC(n)}p_{nm}(\bar{v}_m^i-\ubar{v}_m^i)\\
		&\le\sum_{m\in\calC(n)}p_{nm}[\olcQ_m^{i-1}(x_m^i)-\ulcQ_m^{i-1}(x_m^i)]
		\le\sum_{m\in\calC(n)}p_{nm}\gamma_m(\delta).
	\end{align*}
	Note that $\olcQ_n^i(x)$ is $\big(\sum_{m\in\calC(n)}p_{nm}\sigma_m\big)$-Lipschitz continuous by Proposition~\ref{prop:OverApproximationValidness}, and $\ulcQ_m^i(x)$ is $\big[\sum_{m\in\calC(n)}p_{nm}(l_{n,\lambda}+l_{n,\rho})\big]$-Lipschitz continuous on $\calX_n$ by Proposition~\ref{prop:UnderApproximationValidness}.
        Since we have $l_{m,\lambda}+l_{m,\rho}\ge\sigma_m$ regardless of convexity of the problem, it holds that $\olcQ_n^i(x)$ and $\ulcQ_n^i(x)$ are both $L_n$-Lipschitz continuous.
	Therefore, for any $x\in\calX_n$, $\|x-x_n^i\|\le{\delta_n}/{(2L_n)}$, we have
	\begin{align*}
		\olcQ_n^i(x)-\ulcQ_n^i(x)&\le\olcQ_n^i(x_n^i)-\ulcQ_n^i(x_n^i)+2L_n\|x-x_n^i\|
		\le\sum_{m\in\calC(n)}p_{nm}\gamma_m(\delta)+\delta_n
		=\gamma_n(\delta).
	\end{align*}
        This completes the proof.
        \qed
\end{proof}

\subsubsection{Proof for Lemma~\ref{lemma:IndexSetCardinality}}
\label{app:proof:lemma:IndexSetCardinality}
\begin{proof}
	We claim that for any $i,j\in\calI_n$, $i\neq j$, then $\|x_n^i-x_n^j\|>\delta_n/(2L_n)$.
	Assume for contradiction that $\|x_n^i-x_n^j\|\le{\delta_n}/(2L_n)$ for some $i<j$ and $i,j\in \calI_n(\delta)$. 
	By the definition of $\calI_n(\delta)$, $\gamma_m^i\le\gamma_m(\delta)$ for all $m\in\calC(n)$. 
	By Lemma~\ref{lemma:NeighborhoodApproximation}, $\olcQ_n^i(x)-\ulcQ_n^i(x)\le\gamma_n(\delta)$ for all $x\in\calX_n$, $\|x-x_n^i\|\le\delta_n/(2L_n)$.
	Since $j>i$ and $\|x_n^i-x_n^j\|\le{\delta_n}/(2L_n)$, this implies $\gamma_n^j=\olcQ_n^j(x_n^j)-\ulcQ_n^j(x_n^j)\le \gamma_n(\delta)$, which is a contradiction with $j\in\calI_n(\delta)$.
	Hence we prove the claim.\\
	Let $\calB(R),\calB(R,x)\subseteq \R^d$ denote the closed balls with radius $R\ge0$, centered at 0 and $x$, respectively.
	It follows from the claim that the closed balls $\calB(\delta_n/(4L_n),x_n^i)$ are non-overlapping for all $i\in \calI_n(\delta)$, each with the volume $\Vol\calB(\delta_n/(4L_n))$.
	Thus the sum of the volumes of these balls is $\abs{\calI_n(\delta)}\Vol\calB(\delta_n/(4L_n))$. 
	Note that for each index $i\in\calI_n(\delta)$, $x_n^i\in\calX_n$ and hence $x_n^i\in\calB_{n,k}$ for some $k$.
	The closed ball $\calB(\delta_n/(4L_n),x_n^i)\subseteq\calB_{n,k}+\calB(\delta_n/(4L_n))$, and therefore
	$$\bigcup_{i\in\calI_n(\delta)}\calB(\delta_n/(4L_n),x_n^i)\subseteq\bigcup_{k=1}^{K_n}(\calB_{n,k}+\calB(\delta_n/(4L_n))).$$
	It follows that
	\begin{align*}
            \Vol\bigg[&\bigcup_{i\in\calI_n(\delta)}\calB(\delta_n/(4L_n),x_n^i)\bigg]=\abs{\calI_n(\delta)}\cdot\Vol\calB(\delta_n/(4L_n))\\
		&\le\Vol\left[\bigcup_{k=1}^{K_n}(\calB_{n,k}+\calB(\delta_n/(4L_n))\right]
		\le \sum_{k=1}^{K_n}\Vol\left(\calB_{n,k}+\calB(\delta_n/(4L_n))\right).
	\end{align*}
	Therefore, 
	$$\abs{\calI_n(\delta)}\le\sum_{k=1}^{K_n}\frac{\Vol\left(\calB_{n,k}+\calB(\delta_n/(4L_n))\right)}{\Vol\calB(\delta_n/(4L_n))}=\sum_{k=1}^{K_n}\left(1+\frac{2L_nD_{n,k}}{\delta_n}\right)^{d_n}.$$
        This completes the proof.
	\qed
\end{proof}

\subsubsection{Proof for Lemma~\ref{lemma:StochasticDDPEventProbability}}
\label{app:proof:lemma:StochasticDDPEventProbability}
\begin{proof}
    For each iteration $i\in\bbN$, the event $\cup_{j=1}^M\{\gamma_0^{i-1,j}\le \gamma_0(\delta)=\epsilon\}$ is $\Sigma_{i-1}$-measurable, so it suffices to prove this inequality for its complement in $A_i(\delta)$.
    Note that 
    $$\Prob\{\gamma_t^{i,j}=\tilde\gamma_t^{i,j}\given \Sigma_{i-1}\}\ge
    \Prob\{P^{i,j}_t=n(\tilde{\gamma}^{i,j}_t)\given \Sigma_{i-1}\},$$
    where $n(\tilde{\gamma}^{i,j}_t)$ is the smallest node index $n\in\tcN(t)$ such that $\calQ_t^\Reg(x_n)-\ulcQ_t^{i-1}(x_n)=\tilde{\gamma}^{i,j}_t$ for $(x_n,y_n,z_n)=\scrO_n^\Fwd(x_{t-1}^{i,j},\ulcQ_n^{i-1})$, which is determined given $\Sigma_{i-1}$.
    Using the same argument as in the proof of Theorem~\ref{thm:DDPComplexityUpperBound}, Lemma~\ref{lemma:NeighborhoodApproximation} shows that the event $\cap_{t=1}^{T-1}\{\gamma_t^{i,j}=\tilde{\gamma}^{i,j}_t\}$ implies the event $\{i\in\cup_{t=0}^{T-1}\calI_t(\delta)\}$ and hence the event $A_i(\delta)$ for each $j=1,\dots,M$.
    Therefore, since $\Sigma_{i-1}$ is contained in $\sigma(\Sigma^\oracle_\infty,\sigma\{P^{i',j'}\}_{(i',j')\neq (i,j)})$, by the independent, uniform sampling (Assumption~\ref{assum:StochasticSampling}), we have
    \begin{align*}
    &\Prob(A_i(\delta)\given\Sigma_{i-1}) \\
    &\ge\Prob\biggl(\bigcup_{j=1}^M\bigcap_{t=1}^{T-1}\{\gamma_t^{i,j}=\tilde\gamma_t^{i,j}\}\biggm\vert \Sigma_{i-1}\biggr)\\
    &\ge\Prob\biggl(\bigcup_{j=1}^M\bigcap_{t=1}^{T-1}\{\gamma_t^{i,j}=n(\tilde{\gamma}_t^{i,j})\}\biggm\vert \Sigma_{i-1}\biggr)\\
    &=1-\biggl(1-\Prob\biggl(\bigcap_{t=1}^{T-1}\{\gamma_t^{i,j}=n(\tilde{\gamma}_t^{i,j})\}\biggm\vert \Sigma_{i-1}\biggr)\biggr)^M
    = 1-(1-1/N)^M.
    \end{align*}
    Here, the last step follows from $\Prob(\bigcap_{t=1}^{T-1}\{\gamma_t^{i,j}=n(\tilde{\gamma}_t^{i,j})\}\mid \Sigma_{i-1})=\prod_{t=1}^{T-1}\Prob(\{\gamma_t^{i,j}=n(\tilde{\gamma}_t^{i,j})\}\mid \Sigma_{i-1})=\prod_{t=1}^{T-1}(1/N_t)=N$.
    \qed
\end{proof}

\subsubsection{Proof for Theorem~\ref{thm:StochasticDDPComplexityUpperBound}}
\label{app:proof:thm:StochasticDDPComplexityUpperBound}
\begin{proof}
    Let $a_i:=\indfunc_{A_i}$ denote the indicator of the event $A_i$ for $i\in\N$, and $S_i:=\sum_{i'=1}^i a_i$.
    Note that the event $\{\iota\ge i\}$ implies the event $\{S_i\le I\}$, so we want to bound probability of the latter for sufficiently large indices $i$.

    By Lemma~\ref{lemma:StochasticDDPEventProbability}, we see that the adapted sequence $\{S_i-i\nu\}_{i=1}^{\infty}$ is a submartingale with respect to the filtration $\{\Sigma_i\}_{i=1}^\infty$, because
    $$\bbE(S_i-i\nu\mid\Sigma_{i-1})=S_{i-1}-(i-1)\nu+(\bbE(a_i\mid\Sigma_{i-1})-\nu)\ge S_{i-1}-(i-1)\nu.$$
    Moreover, it has a bounded difference as $S_i+i\nu-(S_{i-1}+(i-1)\nu)=a_i+\nu\le 2$ almost surely.
    Now apply the one-sided Azuma-Hoeffding inequality and we get for any $k>0$ that
    $$\Prob(S_i\le i\nu-k)\le\exp\biggl(-\frac{k^2}{8i}\biggr).$$
    For any $\kappa>1$, take the smallest iteration index $i$ such that $i\nu\ge \kappa I$, and set $k:=(\kappa-1)I$.
    Since $I\ge\frac{i\nu}{2\kappa}$, the probability bound can be then written as
    $$\Prob(\iota\ge i)\le \Prob(S_i\le I)\le\exp\biggl(-\frac{(\kappa-1)^2I^2}{8i}\biggr)\le\exp\biggl(-\frac{(\kappa-1)^2I\nu}{16\kappa}\biggr)$$
    Substitute the left-hand-side with $\Prob(\iota\ge 1+\frac{\kappa I}{\nu})$ using the definition of $i$ and we have obtained the desired inequality.
    \qed
\end{proof}

\subsection{Proofs for Statements in Section~\ref{sec:ComplexityLowerBounds}}
\subsubsection{Proof for Lemma~\ref{lemma:LipschitzOptimalValueFunctionApproximation}}
\label{app:proof:lemma:LipschitzOptimalValueFunctionApproximation}
\begin{proof}
	We prove the lemma recursively starting from the leaf nodes.
	For leaf nodes $n\in\calN$, $\calC(n)=\varnothing$, $Q_n^\Reg(x)=\min_{z\in\calX_{a(n)}}Q_n(z)+\sigma_n\psi_n(x-z)\ge\min_{z\in\calX_{a(n)}}Q_n(z)+l_n\norm{x-z}$.
	Since $Q_n$ is $l_n$-Lipschitz continuous, $Q_n(z)\ge Q_n(x)-l_n\norm{x-z}$.
	Therefore, $Q_n^\Reg(x)\ge Q_n(x)$ and by Proposition~\ref{prop:InfConvolutionLipschitz}, we know $Q_n^\Reg(x)=Q_n(x)$ for all $x\in\calX_{a(n)}$.\\
	Now suppose for a node $n\in\calN$, we know that all of its child nodes satisfy $Q_m^\Reg(x)=Q_m(x),\forall\,x\in\calX_n$, for all $m\in\calC(n)$.
	Then by definition,
	\begin{align*}
	    Q_n^\Reg(x_{a(n)})=\min_{(x,y)\in\calF_n,z\in\calX_{a(n)}}f_n(z,y,x)+\sigma_n\psi_n(x_{a(n)}-z)+\calQ_n^\Reg(x).
	\end{align*}
	By assumption, we know that $\calQ_n^\Reg(x)=\calQ_n(x)$ for all $x\in\calX_n$.
	Therefore, $Q_n^\Reg(x_{a(n)})=\min_{z\in\calX_{a(n)}}Q_n(z)+\sigma_n\psi_n(x_{a(n)}-z)\ge\min_{z\in\calX_{a(n)}}Q_n(z)+l_n\|{x_{a(n)}-z}\|$.
	Then again by $l_n$-Lipschitz continuity of $f_n$, we conclude that $Q_n^\Reg(x)=Q_n(x)$ for all $x\in\calX_{a(n)}$.
	\qed
\end{proof}

\subsubsection{Proof for Lemma~\ref{lemma:GeneralLipschitzFunction}}
\label{app:proof:lemma:GeneralLipschitzFunction}
\begin{proof}
	We claim that if $K<\left(\frac{DL}{4\beta}\right)^d$, then there exists a point $\hat{x}\in\calX$ such that $\norm{\hat{x}-w_k}\ge\frac{2\beta}{L}$ for all $k=1,\dots,K$.
	We prove the claim by contradiction.
	Suppose such a point does not exist, or equivalently, for any point $x\in\calX$, there exists $w_k\in\calW$ such that $\norm{x-w_k}<\frac{2\beta}{L}$.
	This implies that the balls $\calB(2\beta/L,w_k)$ cover the set $\calX$, which leads to
	\begin{align*}
		\Vol\calX\le\Vol\left(\bigcup_{k=1}^K\calB(2\beta/L,w_k)\right)
		\le\sum_{k=1}^K\Vol\calB(2\beta/L,w_k)
		=K\cdot\Vol\calB(2\beta/L).
	\end{align*}
	Therefore, it must hold that $K\ge\Vol\calX/\Vol\calB(2\beta/L)=\left(\frac{DL}{4\beta}\right)^d$, hence a contradiction.\\
	The existence of $\hat{x}$ guarantees that $f(w_k)-L\|\hat{x}-w_k\|\le f(w_k)-2\beta<0$ for each $k=1,2,\dots,K$. 
	Therefore, $0\le \min_{x\in\calX}\ulQ(x)\le\ulQ(\hat{x})=\max_{1\le k\le K}\{0, f(w_k)-L\|\hat{x}-w_k\|\}=0$.
	From compactness of $\calX$ and the continuity of $\olQ(x)$, we have the inequality $\min_{x\in\calX}\olQ(x)\ge\min_{1\le k\le K}\olQ(w_k)=\min_{1\le k\le K}f(w_k)>\beta$, which completes the proof.
	\qed
\end{proof}

\subsubsection{Proof for Theorem~\ref{thm:GeneralLipschitzComplexity}}
\label{app:proof:thm:GeneralLipschitzComplexity}
\begin{proof}
	Let us define the forward subproblem oracle $\scrO_n^\Fwd$ in iteration $i$ and stage $t$ as mapping $(x_{t-1}^i,\ulQ_{t+1}^{i-1})$ to an optimal solution $(x_t^i, z_t^i)$ of the forward subproblem
	\begin{align*}
	    \min_{x_t, z_t\in \calX_t}\left\{f_t(z_t)+L\|x_{t-1}^i - z_t\| + \ulQ_{t+1}^{i-1}(x_t)\right\},
	\end{align*}
	and the backward subproblem oracle $\scrO_n^\Bwd$ in iteration $i$ and stage $t$ as mapping $(x_{t-1}^i, \ulQ_{t+1}^i)$ to an optimal solution $(\hat{x}_t^i, \hat{z}_t^i; \hat{\lambda}_t^i=0, \hat{\rho}_t^i=L)$ of the backward subproblem
	\begin{equation}\label{eq:LipschitzBoxDualProblem}
        \max_{\substack{\lambda=0\\ 0\le\rho\le L}}\min_{x_t,z_t\in\calX_t}\left\{f_t(z_t)+\rho\|x_{t-1}^i-z_t\|+
	\ulQ_{t+1}^{i}(x_t)\right\}.
    \end{equation}
	Note that in the backward subproblem \eqref{eq:LipschitzBoxDualProblem}, we choose that $l_{n,\lambda}=0$ and $l_{n,\rho}=L$. It is observed that the objective function in \eqref{eq:LipschitzBoxDualProblem} is nondecreasing in $\rho$. Therefore, $\hat{\rho}_t^i=L$ is always an optimal solution for the outer maximization in \eqref{eq:LipschitzBoxDualProblem}. The root-node oracle $\scrO_r$ in iteration $i$ simply solves $\min_{x_0\in\calX}\ulQ_1^i(x_0)$ and outputs $x_0^{i+1}$.
	
	In the backward step (Algorithm \ref{alg:DeterministicDualDP}, step \ref{alg:DeterministicDualDP:cut}) and c.f. the definition~\eqref{eq:UnderApproximationCut}, the new generalized conjugacy cut in iteration $k\le i$ is generated by 
	\begin{align*}
	    C_t^k(x\given 0,L,\ubar{v}_t^k)=\ubar{v}_t^k-L\|x-x_{t-1}^k\|=\ubar{v}_t^k-L\|x-x_{t-1}^k\|,
	\end{align*}
	for node $t\ge 1$, where $\ubar{v}_t^k$ is computed and upper bounded as
	\begin{align*}
	    \ubar{v}_t^k &= f_t(\hat{z}_t^k)+L\|x_{t-1}^k - \hat{z}_t^k\|+\min_{x_t\in\calX_t}\ulQ_{t+1}^k(x_t),\\
	    &\le f_t(x_{t-1}^k)+\min_{x_t\in\calX_t}\ulQ_{t+1}^k(x_t),\\
	    &\le f_t(x_{t-1}^k)+\min_{x_t\in\calX_t}\ulQ_{t+1}^i(x_t),
	\end{align*}
	where the first inequality directly follows from \eqref{eq:LipschitzBoxDualProblem}, as $z=x_{t-1}^i$ is a feasible solution of the inner minimization problem, and the second inequality is due to the monotonicity $\ulQ_{t+1}^k(x)\le \ulQ_{t+1}^i(x)$ for $k\le i$.
	Therefore,
	\begin{align}
	    \ulQ_t^i(x)&=\max_{k=1,\dots,i}\left\{0,\ C_t^k(x\given 0,L,\ubar{v}_t^k)\right\}, \quad\text{(by \eqref{eq:UnderApproximationCostToGo}),}\notag\\
	    &=\max_{k=1,\dots,i}\left\{0,\ \ubar{v}_t^k-L\|x-x_{t-1}^k\|\right\},\notag\\
	    &\le \max_{k=1,\dots,i}\left\{0,\  f_t(x_{t-1}^k)-L\|x-x_{t-1}^k\|\right\}+\min_{x_t\in\calX_t}\ulQ_{t+1}^i(x_t).\label{eq:UpperBoundonUnderApproximation}
	\end{align}
	Similarly, by \eqref{eq:OverApproximationUpdateFormula}, the upper approximation of the value function is computed and lower bounded as
	\begin{align*}
	    \bar{v}_t^k &= f_t(z_t^k) + L\|z_t^k - x_{t-1}^k\| + \olQ_{t+1}^k(x_t^k), \\
	    &\ge f_t(x_{t-1}^k) + \olQ_{t+1}^k(x_t^k), \\
	    &\ge f_t(x_{t-1}^k) + \min_{x_t\in{\calX_t}}\olQ_{t+1}^k(x_t),\\
	    &\ge f_t(x_{t-1}^k) + \min_{x_t\in{\calX_t}}\olQ_{t+1}^i(x_t),
	\end{align*}
	where $\hat{\calX}_t^k := \argmin_{x_t\in\calX_t}\ulQ_{t+1}^{k-1}(x_t)$. 
	Therefore, the over-approximation satisfies
	\begin{align}
	    \olQ_t^i(x)&=\min_{k=1,\dots,i}\left\{\bar{v}_t^k+L\|x-x_{t-1}^k\|\right\}, \quad\text{(by \eqref{eq:OverApproximationCostToGo})},\notag\\
	    &\ge\min_{k=1,\dots,i}\left\{f_t(x_{t-1}^k)+L\norm{x-x_{t-1}^k}\right\}+\min_{x_t\in{\calX_t}}\olQ_{t+1}^i(x_t),\notag\\
	    &> \frac{\epsilon}{T} + \min_{x_t\in{\calX_t}}\olQ_{t+1}^i(x_t), \label{eq:LowerBoundonOverApproximation}
	\end{align}
	where \eqref{eq:LowerBoundonOverApproximation} follows from the construction that $f_t(x)>\epsilon/T$ for all $x\in\calX_t$. 

    Now using \eqref{eq:UpperBoundonUnderApproximation} and \eqref{eq:LowerBoundonOverApproximation}, we can prove the statement of the theorem.
	Suppose the iteration index $i<\left(\frac{DLT}{4\epsilon}\right)^d$. 
	Denote $w_k:=x_{t-1}^k$ for $k=1,\dots,i$. Since $\epsilon/T<f_t(w_k)<2\epsilon/T$ by construction, applying Lemma~\ref{lemma:GeneralLipschitzFunction}, we get $\min_{x_t\in\calX_t}\left\{\max_{k=1,\dots,i}\left\{0,\  f_t(x_{t-1}^k)-L\|x_t-x_{t-1}^k\|\right\}\right\}=0$. By \eqref{eq:UpperBoundonUnderApproximation}, $\min_{x_{t-1}\in\calX_{t-1}}\ulQ_t^i(x_{t-1})\le\min_{x_t\in\calX_t}\ulQ_{t+1}^i(x_t)$ for $t=1,\dots,T$. Note at stage $T$, $\ulQ_{T+1}^i\equiv 0$. Therefore, $\min_{x_{t-1}\in\calX_{t-1}}\ulQ_t^i(x_{t-1})\le0$ for all $t=1,\dots,T$. But since $\ulQ_t^i(x)\ge 0$ for all $x\in \calX_{t-1}$, we have $\min_{x\in\calX_{t-1}}\ulQ_t^i(x)=0$ for all $1\le t\le T$. Hence we see that $\LB=\min_{x_0\in \calX_0}\ulQ_1^i(x_0)=0$ in iteration $i$.
	
	Since $\calX_t$ is a norm ball, it is compact. So by \eqref{eq:LowerBoundonOverApproximation}, we have \begin{align*}
	    \min_{x_{t-1}\in\calX_{t-1}}\olQ_t^i(x_{t-1})> \epsilon/T+\min_{x_t\in\calX_t}\olQ_{t+1}^i(x_t),\quad \forall 1\le t\le T.
	\end{align*}
	This recursion implies that $\min_{x_0\in\calX_0}\olQ_1^i(x_0)> T({\epsilon}/{T})=\epsilon$. According to Algorithm \ref{alg:DeterministicDualDP}, Steps \ref{alg:DeterministicDDP:UB1}-\ref{alg:DeterministicDDP:UB2}, we have that in iteration $i$, $\displaystyle\UB = \min_{k=1,\dots,i}\{\olQ_1^k(x_0^{k+1})\}\ge\min_{k=1,\dots,i}\{\olQ_1^i(x_0^{k+1})\}\ge\min_{x_0\in\calX_0}\olQ_1^i(x_0)>\epsilon$. Combining the above analysis, we have $\UB-\LB>\epsilon$ in iteration $i$. Therefore, we conclude that if $\UB-\LB\le\epsilon$ at the $i$-th iteration, then we have $i\ge\left(\frac{DLT}{4\epsilon}\right)^d$.
	\qed
\end{proof}

\subsubsection{Proof for Lemma~\ref{lemma:SphericalCap}}
\label{app:proof:lemma:SphericalCap}
\begin{proof}
	Let $v_d$ denote the $d$-volume for a $d$-dimensional unit ball.
	Recall that the $d$-volume of $\calS^d(R)$ is given by $\Vol_d(\calS^d(R))=(d+1)v_{d+1}R^d=\displaystyle\frac{(d+1)\pi^{(d+1)/2}}{\Gamma(\frac{d+1}{2}+1)}R^d$.
	We next estimate the $d$-volume for the spherical cap $\calS^d_{\beta}(R,x)$.
	Let $\alpha\in(0,\pi/2)$ denote the central angle for the spherical cap, i.e., $\cos\alpha=1-\beta/R$.
	Since $\beta<(1-\frac{\sqrt{2}}{2})R$, we know that $\alpha<\pi/4$.
	Then for any $x\in \calS^d(R)$, the $d$-volume of the spherical cap can be calculated through
	$$\Vol_d(\calS^d_\beta(R,x))=\int_{0}^{\alpha}\Vol_{d-1}(\calS^{d-1}(R\sin\theta))R\dif\theta=dv_dR^d\int_{0}^{\alpha}(\sin\theta)^{d-1}\dif\theta.$$
	Note that when $\theta\in(0,\alpha)$, $\sin\theta>0$ and $\cos\theta/\sin\theta>1$.
	Therefore, since $d\ge2$,
	$$\Vol_d(\calS^d_\beta(R,x))\le dv_dR^d\int_{0}^{\alpha}(\sin\theta)^{d-1}\frac{\cos\theta}{\sin\theta}\dif\theta=dv_dR^d\cdot\frac{(\sin\alpha)^{d-1}}{d-1}.$$
	By substituting $\sin\alpha=\sqrt{1-(1-\beta/R)^2}$, we have that 
	\begin{align*}
		\frac{\Vol_d(\calS^d_\beta(R,x))}{\Vol_d(\calS^d(R))}&\le\frac{d}{d^2-1}\frac{v_d}{v_{d+1}}(\sin\alpha)^{d-1},\\
		&=\frac{d}{d^2-1}\frac{v_d}{v_{d+1}}\left(1-\left(1-\frac{\beta}{R}\right)^2\right)^{(d-1)/2}
		\le\frac{d}{d^2-1}\frac{v_d}{v_{d+1}}\left(\frac{2\beta}{R}\right)^{(d-1)/2}.
	\end{align*}
	Now suppose $\calW=\{w_i\}_{k=1}^K$ is a maximal set satisfying the assumption, that is, for any $w\in\calS^d(R),w\notin\calW$, there exists $w_k\in\calW$ such that $w\in\calS^d_\beta(R,w_k)$.
	Then, $\bigcup_{k=1}^K\calS^d_\beta(R,w_k)\supseteq\calS^d(R)$, therefore
	$$\Vol_d(\calS^d(R))\le\sum_{k=1}^K\Vol_d(\calS^d_\beta(R,w_k))=\abs{\calW}\Vol_d(\calS^d_\beta(R,w_1)).$$
	Therefore we have
	\begin{align*}
	\abs{\calW}\ge\frac{\Vol_d(\calS^d(R))}{\Vol_d(\calS^d_\beta(R,w_1))}&\ge\left[\frac{d}{d^2-1}\frac{v_d}{v_{d+1}}\left(\frac{2\beta}{R}\right)^{(d-1)/2}\right]^{-1}\\
	&=\frac{(d^2-1)\sqrt{\pi}}{d}\frac{\Gamma(d/2+1)}{\Gamma(d/2+3/2)}\left(\frac{R}{2\beta}\right)^{(d-1)/2}.    
	\end{align*}
        This completes the proof.
	\qed
\end{proof}

\subsubsection{Proof for Lemma~\ref{lemma:SphericalCapConvexFunction}}
\label{app:proof:lemma:SphericalCapConvexFunction}
\begin{proof}
	\begin{enumerate}
	    \item By construction, $F$ is a convex piecewise linear function. Since each linear piece has a Lipschitz constant $(L/R)\|w_k\|=L$, as $\|w_k\|=R$. Thus, $F$, as the maximum of these linear functions, is also an $L$-Lipschitz function. 
	    \item Since $w_k\notin\calS^d_{\epsilon/L}(w_l)$ for $l\ne k$, $\innerprod{w_l}{w_k-w_l}<-\epsilon R/L$. Hence, $v_l + L/R\innerprod{w_l}{w_k-w_l}<v_l - \epsilon<0$. Therefore, $F(w_k)=\max\{0,v_k\}=v_k$. 
	    \item Notice that the above maximum for $F(w_k)$ is achieved at a unique linear piece, which implies that $F$ is differentiable at $w_k$ for all $k$. The gradient $\nabla F(w_k)=(L/R) w_k$. This gives the inequality in property 3 from the proof of property 2.
	    \item The inequality of property 3 also implies that $\ulQ_l(w_l)=0$. Now we show $\olQ_l(w_l)>3\epsilon/2$. Since $w_l\notin\calS^d_{\epsilon/L}(w_k)$ for any $k\ne l$, then $\|w_l-w_k\|>\epsilon/L$ by the Pythagorean theorem. Also $v_k>\epsilon/2$. So $q_k:=v_k + L\|w_l-w_k\|>3\epsilon/2$. Since $\olQ_l(w_l)$ is the convex combination of $q_k$'s, we have $\olQ_l(w_l)>3\epsilon/2$. 
	\end{enumerate}
    This completes the proof.
	\qed
\end{proof}

\subsubsection{Proof for Theorem~\ref{thm:ConvexLipschitzComplexity}}
\label{app:proof:thm:ConvexLipschitzComplexity}
\begin{proof}
	First we claim that in any iteration $i$, for any nodal problem $k$ in stage $t$, the optimal solution in the forward step (Algorithm~\ref{alg:DeterministicDualDP} line~\ref{alg:DeterministicDualDP:ForwardStep}) must be $x_t^i=w_{t,k}$.
	To see this, recall that we set $l_{n,\lambda}=L_t$ and $l_{n,\rho}=0$ for all $n\in\tcN(t)$, so by Proposition~\ref{prop:UnderApproximationValidness}, the under-approximation of the cost-to-go function $\ulcQ_t^i(x)$ is $L_{t+1}$-Lipschitz continuous for all iteration $i\in\bbN$.
	So consider the forward step subproblem for node $n=(t,k)$ with $t\ge 2$ in iteration $i$
	\begin{equation}\label{eq:ConvexLipschitzComplexity:ForwardStep}
		\min_{x_t\in\calX}\{F_t(x_{t-1}^i)+L_t\norm{x_t-w_{t,k}}+\ulcQ_t^i(x_t)\}=F_t(x_{t-1}^i)+\min_{x_t\in\calX}\{L_t\norm{x_t-w_{t,k}}+\ulcQ_t^i(x_t)\}.
	\end{equation}
	Note that by the $L_{t+1}$-Lipschitz continuity of $\ulcQ_t^i$, 
	$$L_t\norm{x_t-w_{t,k}}+\ulcQ_t^i(x_t)\ge\ulcQ_t^i(w_{t,k})+(L_t-L_{t+1})\norm{x_t-w_{t,k}},$$
	which, alongside with the fact that $L_{t+1}<L_t$, implies that $x_t=w_{t,k}$ is the unique optimal solution to the forward step problem~\eqref{eq:ConvexLipschitzComplexity:ForwardStep}.
	The above argument also works for any node in the stage $t=1$ by simply removing the constant term $F_t(x_{t-1}^i)$ in the nodal problem~\eqref{eq:ConvexLipschitzComplexity:ForwardStep}.

	Now we define over- and under-approximations of the value functions for the purpose of this proof.
	For node $n=(t,k)$, let 
	\begin{equation*}
		\ulQ_{t,k}^i(x):=\max_{1\le j\le i}\{0,C_{t,k}^j(x\given \hat{\lambda}_{t,k}^j,0,\ubar{v}_{t,k}^j)\},
	\end{equation*}
	and
	\begin{equation*}
		\olQ_{t,k}^i(x):=\conv_{1\le j\le i}\{\bar{v}_{t,k}^j+L_t\nVert{x_{t-1}^j-x}\},
	\end{equation*}
	where for each $j$, by formula~\eqref{eq:UnderApproximationUpdateFormula},
	\begin{align*}
		\ubar{v}_{t,k}^j&:=\max_{\norm{\lambda}\le L_t}\min_{z,x\in\calX}\{F_{t-1}(z)+\bangle{\lambda}{x_{t-1}^j-z}+L_t\norm{x-w_{t,k}}+\ulcQ_t^j(x)\},\\
		&=\max_{\norm{\lambda}\le L_t}\min_{z\in\calX}\{F_{t-1}(z)+\bangle{\lambda}{x_{t-1}^j-z}\}+\min_{x\in\calX}\{L_t\norm{x-w_{t,k}}+\ulcQ_t^j(x)\}\\
		&=F_{t-1}(x_{t-1}^j)+\ulcQ_{t}^j(w_{t,k})
	\end{align*}
	with $-\hat{\lambda}_{t,k}^j\in\partial F_{t-1}(x_{t-1}^j)$ being an optimal solution to the outer maximization problem, and by formula~\eqref{eq:OverApproximationUpdateFormula}
	\begin{align*}
		\bar{v}_{t,k}^j&:=F_{t-1}(z_{t,k}^j)+L_t\nVert{x_{t-1}^j-z_{t,k}^j}+\olcQ_{t}^j(x_{t,k}^j)\\
		&=F_{t-1}(x_{t-1}^j)+\olcQ_t^j(x_{t,k}^j).
	\end{align*}
	The last equalities of $\ubar{v}_{t,k}^j$ and $\bar{v}_{t,k}^j$ are due to the $L_t$-Lipschitz continuity of $F_{t-1}$.
	So we have by the monotonicity of the under- and over-approximations that for each $j\le i$,
	\begin{align*}
		\ulQ_{t,k}^i(x)&=\max_{1\le j\le i}\{0,F_{t-1}(x_{t-1}^j)+\bangle{\hat{\lambda}_{t,k}^j}{x_{t-1}^j-x}+\ulcQ_t^j(w_{t,k})\}\\
		&\le\max_{1\le j\le i}\{0,F_{t-1}(x_{t-1}^j)+\bangle{\hat{\lambda}_{t,k}^j}{x_{t-1}^j-x}\}+\ulcQ_t^i(w_{t,k}),
	\end{align*}
	and
	\begin{align*}
		\olQ_{t,k}^i(x)&=\conv_{1\le j\le i}\{F_{t-1}(x_{t-1}^j)+L_t\nVert{x_{t-1}^j-x}+\olcQ_t^j(w_{t,k})\}\\
		&\ge\conv_{1\le j\le i}\{F_{t-1}(x_{t-1}^j)+L_t\nVert{x_{t-1}^j-x}\}+\olcQ_t^i(w_{t,k}).
	\end{align*}
	Therefore,
	\begin{align}\label{eq:ConvexLipschitzComplexity:Recursion}
		\olQ_{t,k}^i(x)-&\ulQ_{t,k}^i(x)\ge\olcQ_t^i(w_{t,k})-\ulcQ_t^i(w_{t,k})\\
		&+\conv_{1\le j\le i}\{F_{t-1}(x_{t-1}^j)+L_t\nVert{x_{t-1}^j-x}\}-\max_{1\le j\le i}\{0,F_{t-1}(x_{t-1}^j)+\bangle{\hat{\lambda}_{t,k}^j}{x_{t-1}^j-x}\}.\notag
	\end{align}
	Thus we have $\olQ_{t,k}^i(w_{t-1,k'})-\ulQ_{t,k}^i(w_{t-1,k'})\ge \olcQ_t^i(w_{t,k})-\ulcQ_t^i(w_{t,k})$ for any $k'=1,\dots,K_{t-1}$, since the last two terms on the right hand side of \eqref{eq:ConvexLipschitzComplexity:Recursion} are over- and under-approximations of the function $F_{t-1}$, respectively.
	Moreover, note that $x_{t-1}^j=w_{t-1,k'}$ for some $k'=1,\dots,K_{t-1}$ as it is the unique solution in the forward step.
	By Lemma~\ref{lemma:SphericalCapConvexFunction}, whenever the node $n'=(t-1,k')$ is never sampled up to iteration $i$, we further have
	$$\olQ_{t,k}^i(w_{t-1,k'})-\ulQ^i_{t,k}(w_{t-1,k'})>\frac{3\epsilon}{2(T-1)}+\olcQ^i_{t}(w_{t,k})-\ulcQ^i_t(w_{t,k}).$$ 
	Recall the definitions~\eqref{eq:UnderApproximationCostToGo} and~\eqref{eq:OverApproximationCostToGo}, for any $x\in\calX$,
	\begin{equation*}
		\ulcQ_{t-1}^i(x)=\max_{1\le j\le i}\left\{0,\frac{1}{K_t}\sum_{k=1}^{K_t}C_{t,k}^j(x\given \hat{\lambda}_{t,k}^j,0,\ubar{v}_{t,k}^j)\right\}\le\frac{1}{K_t}\sum_{k=1}^{K_t}\ulQ_{t,k}^i(x),
	\end{equation*}
	and 
	\begin{equation*}
		\olcQ_{t-1}^i(x)=\conv_{1\le j\le i}\left\{\frac{1}{K_t}\sum_{k=1}^{K_t}(\bar{v}_{t,k}^j+L_t\nVert{x_{t-1}^j-x})\right\}\ge\frac{1}{K_t}\sum_{k=1}^{K_t}\olQ_{t,k}^i(x).
	\end{equation*}
	Consequently, for any $k'=1,\dots,K_{t-1}$,
	\begin{equation*}
		\olcQ^i_{t-1}(w_{t-1,k'})-\ulcQ^i_{t-1}(w_{t-1,k'})\ge\frac{1}{K_{t}}\sum_{k=1}^{K_{t}}[\olcQ^i_t(w_{t,k})-\ulcQ^i_t(w_{t,k})],
	\end{equation*}
	and in addition, for any node $n'=(t-1,k')$ not sampled up to iteration $i$, 
	\begin{equation*}
	\olcQ^i_{t-1}(w_{t-1,k'})-\ulcQ^i_{t-1}(w_{t-1,k'})>\frac{3\epsilon}{2(T-1)}+\frac{1}{K_{t}}\sum_{k=1}^{K_{t}}[\olcQ^i_t(w_{t,k})-\ulcQ^i_t(w_{t,k})].
	\end{equation*}
	Therefore, for any iteration index $i\le\frac{1}{3}\abs{\calW_t}$, $t=1,\dots,T-1$, then there are $K_t-i\ge\frac{2}{3}\abs{\calW_t}$ nodes not sampled in stage $t$, which implies
	\begin{equation*}
		\frac{1}{K_{t-1}}\sum_{k'=1}^{K_{t-1}}[\olcQ^i_{t-1}(w_{t-1,k'})-\ulcQ^i_{t-1}(w_{t-1,k'})]>\frac{\epsilon}{T-1}+\frac{1}{K_{t}}\sum_{k=1}^{K_{t}}[\olcQ^i_t(w_{t,k})-\ulcQ^i_t(w_{t,k})].
	\end{equation*}
	Consequently, $\olcQ^i_r-\ulcQ^i_r>(T-1)\cdot\dfrac{\epsilon}{T-1}=\epsilon.$
	Therefore, if $\UB-\LB=\olcQ^i_r-\ulcQ^i_r\le\epsilon$ in the iteration $i$, then
	\begin{align*}
		i&>\frac{1}{3}\min_{t=1,\dots,T-1}\abs{\calW_t}
		\ge\frac{1}{3}\frac{d(d-2)\sqrt{\pi}}{d-1}\frac{\Gamma(d/2+1/2)}{\Gamma(d/2+1)}\left(\frac{DL(T-1)}{8\epsilon}\right)^{(d-2)/2}.
	\end{align*}
	This completes the proof.
	\qed
\end{proof}

\section{Problem Classes with Exact Penalization}
\label{sec:ExactPenaltyProblemClasses}
In this section, we discuss the problem classes that allows exact penalty reformulation, as stated in Assumption~\ref{assum:ExactPenalty}.
A penalty function $\psi:\R^d\to\R_+$ is said to be sharp, if $\psi(x)\ge c\norm{x}$ for all $x\in V\subset\R^d$, for some open neighborhood $V\ni 0$ and some positive scalar $c>0$.

\subsection{Convex problems with interior points.}
For convex problems, the Slater condition implies that the intersection of the domain $\dom(\sum_{n\in\calN} f_n)$ and the feasible sets $\Pi_{n\in\calN}\calF_n$ has a non-empty interior.
We have the following proposition.
\begin{proposition}
	\label{prop:ExactPenalConvex}
	Suppose the problem~\eqref{eq:ExtensiveForm} is convex and satisfies the Slater condition
        For any sharp penalty functions $\psi_n$, there exist $\sigma_n>0$ such that the penalty reformulation is exact.
\end{proposition}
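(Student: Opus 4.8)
The plan is to prove exactness by a Lagrangian exact-penalty argument applied to the extensive penalty reformulation~\eqref{eq:PenalizedExtensiveForm}, dualizing the coupling constraints $z_n = x_{a(n)}$. Write $\theta := ((x_n,y_n,z_n))_{n\in\calN}$, $F(\theta) := \sum_{n\in\calN} p_n f_n(z_n,y_n,x_n)$, and the Lagrangian $\calL(\theta,\lambda) := F(\theta) + \sum_{n\in\calN} p_n \bangle{\lambda_n}{x_{a(n)}-z_n}$. Since enforcing $z_n = x_{a(n)}$ turns~\eqref{eq:PenalizedExtensiveForm} back into the original program~\eqref{eq:ExtensiveForm}, the idea is to produce a single multiplier vector $\lambda^* = (\lambda_n^*)$ and penalty parameters $\sigma_n$ large enough that the penalty term pointwise dominates the linear term $\bangle{\lambda_n^*}{x_{a(n)}-z_n}$; the penalized objective is then bounded below by $\min_\theta \calL(\theta,\lambda^*) = v^{\primal}$, which together with $v^{\reg}\le v^{\primal}$ forces $v^{\reg}=v^{\primal}$ and, with a strict choice, forces every optimizer to satisfy $z_n = x_{a(n)}$.

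First I would use convexity (the coupling constraints $x_{a(n)}-z_n=0$ are affine, and by Proposition~\ref{prop:OptimalValueFunctionProperties} and the hypotheses the data are convex) together with the Slater condition to obtain strong duality for the coupling constraints and attainment of a finite optimal multiplier $\lambda^*$, so that $v^{\primal} = \max_\lambda \min_\theta \calL(\theta,\lambda) = \min_\theta \calL(\theta,\lambda^*)$. Second, I would upgrade the \emph{local} sharpness of $\psi_n$ to a global linear minorant on the compact difference set: sharpness gives $\psi_n(v)\ge c_n\norm{v}$ on a neighborhood $B(0,r)\subseteq V$, while on the compact annulus $\{v\in\calX_{a(n)}-\calX_{a(n)} : \norm{v}\ge r\}$ the continuous, strictly positive $\psi_n$ attains a positive minimum $m_r$, so $\psi_n(v)\ge (m_r/\diam\calX_{a(n)})\norm{v}$ there; hence $\psi_n(v)\ge \tilde c_n\norm{v}$ on all of $\calX_{a(n)}-\calX_{a(n)}$ with $\tilde c_n := \min\{c_n,\, m_r/\diam\calX_{a(n)}\}>0$ (for the root node the residual is identically zero, so nothing is needed).

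With these two ingredients I would set $\sigma_n > \norm{\lambda_n^*}_*/\tilde c_n$. Then for every $v=x_{a(n)}-z_n$ one has $\sigma_n\psi_n(v)\ge \sigma_n\tilde c_n\norm{v} \ge \norm{\lambda_n^*}_*\norm{v}\ge \bangle{\lambda_n^*}{v}$, with the first two inequalities strict whenever $v\neq 0$. Summing over $n$ with the weights $p_n>0$ gives, for every feasible $\theta$, the penalized objective $\ge \calL(\theta,\lambda^*)\ge \min_\theta\calL(\theta,\lambda^*)=v^{\primal}$; minimizing yields $v^{\reg}\ge v^{\primal}$, and since $v^{\reg}\le v^{\primal}$ always holds we get $v^{\reg}=v^{\primal}$. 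For exactness of \emph{all} optimizers, suppose an optimal $\theta^*$ of~\eqref{eq:PenalizedExtensiveForm} had $z_{n_0}^*\neq x_{a(n_0)}^*$ at some node $n_0$; the strict inequality at $n_0$ would give $v^{\reg}=F(\theta^*)+\sum_n p_n\sigma_n\psi_n(x_{a(n)}^*-z_n^*) > \calL(\theta^*,\lambda^*)\ge v^{\primal}=v^{\reg}$, a contradiction, so $z_n^*=x_{a(n)}^*$ for all $n$, which is exactly Assumption~\ref{assum:ExactPenalty}.

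I expect the main obstacle to be the first step: rigorously justifying strong duality and, crucially, the \emph{existence of a finite} optimal multiplier $\lambda^*$ for the coupling constraints under only the stated Slater hypothesis. The coupling constraints are affine, so no qualification is needed for them, but one must confirm that the Slater point in $\mathrm{int}(\dom(\sum_n p_n f_n)\cap\prod_n\calF_n)$ supplies the relative-interior condition required for dual attainment of the full convex program, and one must treat the possibly extended-real-valued $f_n$ (indicator-type constraints encoded in the objective) carefully when invoking the sum/intersection rule. The local-to-global sharpness conversion is routine given compactness and continuity of $\psi_n$, and the final comparison argument is elementary.
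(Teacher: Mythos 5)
Your proposal is correct and is essentially the paper's own argument viewed through its dual: the optimal coupling multiplier $\lambda^*$ you obtain from strong duality is exactly (up to the probability weights $p_n$ and a sign) the subgradient $\lambda\in\partial\tau(0)$ of the perturbation function $\tau(w)$ that the paper extracts directly from the Slater condition via $0\in\mathrm{int}(\dom(\tau))$, and both proofs conclude by choosing $\sigma_n$ large enough that the sharp penalties strictly dominate this linear functional on the compact sets $\calX_{a(n)}-\calX_{a(n)}$, forcing every optimizer of~\eqref{eq:PenalizedExtensiveForm} to have zero coupling violation. The step you flag as the main obstacle---attainment of a finite multiplier---is precisely what the paper's perturbation-function formulation settles in one line (a proper convex function is subdifferentiable at any interior point of its domain), while your explicit local-to-global upgrade of sharpness, using the positive minimum of $\psi_n$ on the compact region away from the origin, fills in a step the paper leaves implicit.
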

\begin{proof}
	Consider a perturbation vector $w=(w_n)_{n\in\calN}$ such that $w_n\in\calX_{a(n)}-\calX_{a(n)}$ for each $n\in\cN$, 
	and define the perturbation function
	\begin{align*}
		&\tau(w)\coloneqq\min_{(z_n,x_n,y_n)\in\calX_{a(n)}\times\calF_n}\biggl\{\sum_{n\in\calN}p_{n}f_{n}(z_n,y_n,x_n)\biggm\vert w_n=x_{a(n)}-z_n,\,\forall\,n\in\calN\biggr\}.
	\end{align*}
	The function $\tau$ is convex  and $v^\primal=\tau(0)$ by definition.
	By the Slater condition, $0\in\mathrm{int}(\dom(\tau))$ so there exists a vector $\lambda\in\R^{\abs{\calN}}$ such that $\tau(w)\ge\tau(0)+\innerprod{\lambda}{w}$ for all perturbation $w$.
	Since $\psi_n$ are sharp, there exist $\sigma_n>0$ such that $\sum_{n\in\calN}\sigma_n\psi_n(w_n)+\innerprod{\lambda}{w}>0$ for all $w\neq 0$.
	Consequently the penalty reformulation is exact since $v^\reg=\min_{w}\tau(w)+\sum_{n\in\calN}\sigma_n\psi_n(w_n)$ and all optimal solutions must satisfy $w_n=x_{a(n)}-z_n=0$ for all $n\in\calN$.
	\qed
\end{proof}

\subsection{Problems with finite state spaces.}
We say a problem~\eqref{eq:NodalProblem} has finite state spaces if the state spaces $\calX_n$ are finite sets for all nodes $n$.
Such problems appear in multistage integer programming~\cite{zou_stochastic_2018}, or when the original state spaces can be approximated through finite ones~\cite{zou_multistage_2019,hjelmeland_nonconvex_2019}.
The following proposition shows the penalty reformulation is exact whenever the state spaces are finite.
\begin{proposition}
	\label{prop:ExactPenalFiniteState}
	For any penalty functions $\psi_n$, $n\in\calN$, if the state spaces are finite, then there exists a finite $\sigma_n>0$ such that the penalty reformulation \eqref{eq:PenalizedExtensiveForm} is exact.
\end{proposition}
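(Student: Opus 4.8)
The plan is to exploit the nonnegativity of the nodal costs together with the fact that, on a finite state space, the penalty evaluated at any \emph{violated} copy is bounded below by a strictly positive constant. First I would record two facts that hold for every choice of $\sigma_n>0$: the original value $v^\primal$ is finite by the properness part of Assumption~\ref{assum:MinCondition}, and $v^\reg\le v^\primal$, as noted after~\eqref{eq:DefinitionRootNodeProblem}.

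Next, for each node $n$ with $\abs{\calX_{a(n)}}\ge 2$ I would introduce the finite set of nonzero state differences $\Delta_n\coloneqq\{x-z:x,z\in\calX_{a(n)},\,x\neq z\}$ and set $m_n\coloneqq\min_{w\in\Delta_n}\psi_n(w)$. Because $\calX_{a(n)}$ is finite, $\Delta_n$ is a finite set of nonzero vectors, and since $\psi_n(w)>0$ for all $w\neq 0$ by the definition of a penalty function, the minimum $m_n$ is attained and strictly positive. For nodes with $\abs{\calX_{a(n)}}=1$, in particular the root, the constraint $z_n=x_{a(n)}$ holds automatically, so these need no attention and one may take $\sigma_n=1$. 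I would then choose $\sigma_n>v^\primal/(p_n m_n)$ for every node with $\abs{\calX_{a(n)}}\ge 2$; each such $\sigma_n$ is finite since $v^\primal<\infty$, $p_n>0$, and $m_n>0$.

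With these parameters fixed, exactness follows by contradiction. Suppose $(\bar x_n,\bar y_n,\bar z_n)_{n\in\calN}$ is an optimal solution of~\eqref{eq:PenalizedExtensiveForm} violating the coupling at some node $n_0$, i.e.\ $\bar z_{n_0}\neq\bar x_{a(n_0)}$; then $\bar x_{a(n_0)}-\bar z_{n_0}\in\Delta_{n_0}$, so $\psi_{n_0}(\bar x_{a(n_0)}-\bar z_{n_0})\ge m_{n_0}$. Since every $f_n$ is nonnegative and every penalty term is nonnegative, all summands in the objective of~\eqref{eq:PenalizedExtensiveForm} except the single term $p_{n_0}\sigma_{n_0}\psi_{n_0}(\bar x_{a(n_0)}-\bar z_{n_0})$ may be discarded from below, giving
\[
 v^\reg=\sum_{n\in\calN}p_n\bigl(f_n(\bar z_n,\bar y_n,\bar x_n)+\sigma_n\psi_n(\bar x_{a(n)}-\bar z_n)\bigr)\ge p_{n_0}\sigma_{n_0}m_{n_0}>v^\primal,
\]
which contradicts $v^\reg\le v^\primal$. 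Hence every optimal solution satisfies $\bar z_n=\bar x_{a(n)}$ for all $n$, which is precisely the content of Assumption~\ref{assum:ExactPenalty}.

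The argument is short, and the only delicate point is the uniform positive lower bound $m_n$ on the penalty at a violated copy: this is exactly where finiteness of the state space enters, guaranteeing that a genuine \emph{minimum} over the nonzero differences exists and is positive, rather than an infimum that could vanish. The nonnegativity of the $f_n$ (from the revised Assumption~\ref{assum:MinCondition}) is the other essential ingredient, since it lets us bound $v^\reg$ from below by a single penalty term without any control over the cost terms.
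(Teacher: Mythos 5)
Your proof is correct and follows essentially the same route as the paper's: both extract a strictly positive minimum of the penalty over the finitely many nonzero state differences (your $m_n$, the paper's $d_n$), choose $\sigma_n$ large relative to it, and derive a contradiction with $v^\reg\le v^\primal$ from a single violated coupling constraint. The only (minor) difference is that the paper lower-bounds the cost terms by the optimal value $c$ of the fully decoupled relaxation, yielding the factor $\sigma_n=1+(v^\primal-c)/(p_n d_n)$ and making the argument independent of the sign of $f_n$, whereas you invoke the nonnegativity of $f_n$ from Assumption~\ref{assum:MinCondition} to drop those terms to zero, which is equally valid here.
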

\begin{proof}
	Let $d_n\coloneqq\min_{x\neq z\in\calX_{a(n)}}\abs{\psi_n(x-z)}$ for each $n\in\cN$.
	Since $\psi_n$ is a penalty function and the state space $\calX_n$ is finite, we know $d_n>0$.
	Define $c$ as
	\begin{align}
	    c:=\min_{(z_n,y_n,x_n)\in\cX_{a(n)}\times\cF_n}\;\sum_{n\in\cN} p_n f_n(z_n,y_n,x_n).\label{eq:PenaltyFinitespaceDefc}
	\end{align}
	Since \eqref{eq:PenaltyFinitespaceDefc} is a relaxation of the original problem \eqref{eq:ExtensiveForm} by ignoring coupling constraint $z_n = x_{a(n)}$, then $c\le v^{\primal}$.
	We choose
	$\sigma_n = 1 + (v^\primal-c)/(p_n d_n)$ for all $n\in\cN$.\\
	Now let $(x_n,y_n,z_n)_{n\in\calN}$ be an optimal solution to the regularized problem~\eqref{eq:RegularizedNodalProblem}.
	Then if there exists $x_{a(m)}\neq z_m$ for some $m\neq r$, then $p_m\sigma_m\psi_m(x_{a(m)}-z_m)>v^\primal-c$.
	Consequently,
	\begin{align*}
		v^\reg&\ge c+\sum_{n\in\calN}p_n\sigma_n\psi_n(x_{a(n)}-z_n)
		\ge c + p_m\sigma_m\psi_m(x_{a(m)}-z_m)
		> c+ v^\primal - c
		=v^\primal.
	\end{align*}
	This is a contradiction since $v^\reg\le v^\primal$.
	Thus, any optimal solution to the reformulation~\eqref{eq:PenalizedExtensiveForm} must have $x_{a(n)}=z_n$ for all $n\neq r$, which means the penalty reformulation is exact.
	\qed
\end{proof}

\subsection{Problems defined by mixed-integer linear functions.}
The problem~\eqref{eq:ExtensiveForm} is said to be defined by mixed-integer linear functions, if all the feasible sets $\calF_n$ and the epigraphs $\mathrm{epi}f_n$ are representable by mixed-integer variables and non-strict linear inequalities with rational coefficients.
Recall that by Assumption~\ref{assum:MinCondition}, the primal problem is feasible, $v^\primal>-\infty$.
We have the following proposition on the exact penalty reformulation.
\begin{proposition}[\cite{feizollahi2017exact}, Theorem 5]
	\label{prop:ExactPenalMILP}
	If problem~\eqref{eq:ExtensiveForm} is defined by mixed-integer linear functions and the penalty functions $\psi_n$ are sharp for all $n\in\calN$, then there exist $\sigma_n>0$, such that the penalty reformulation is exact.
\end{proposition}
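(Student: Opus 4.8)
The plan is to recast the penalty reformulation~\eqref{eq:PenalizedExtensiveForm} as a single mixed-integer linear program parametrized by the coupling perturbations, and then invoke the exact penalization result for MILPs. Reusing the perturbation function from the convex case (Proposition~\ref{prop:ExactPenalConvex}), I would set, for $w=(w_n)_{n\in\calN}$ with $w_n\in\calX_{a(n)}-\calX_{a(n)}$,
\begin{align*}
\tau(w):=\min_{(z_n,x_n,y_n)\in\calX_{a(n)}\times\calF_n}\Big\{\textstyle\sum_{n\in\calN}p_n f_n(z_n,y_n,x_n) : w_n=x_{a(n)}-z_n,\ \forall\,n\in\calN\Big\},
\end{align*}
so that $v^\primal=\tau(0)$ and $v^\reg=\min_w\{\tau(w)+\sum_{n\in\calN}p_n\sigma_n\psi_n(w_n)\}$. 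Because every $\calF_n$ and every epigraph $\epi f_n$ is representable by mixed-integer variables and non-strict linear inequalities with rational coefficients, $\tau$ is exactly the optimal value function of an MILP in which the components of $w$ are right-hand-side perturbations. Exactness of the reformulation is then equivalent to the claim that $w=0$ minimizes $w\mapsto\tau(w)+\sum_n p_n\sigma_n\psi_n(w_n)$ for suitably large $\sigma_n$.

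The decisive structural fact I would use is that an MILP value function with rational data, although possibly discontinuous, admits a finite modulus of decrease: by the Blair--Jeroslow theory of subadditive value functions, which underlies Theorem~5 of \cite{feizollahi2017exact}, there is a finite constant $\alpha\ge 0$ with $\tau(w)\ge\tau(0)-\alpha\norm{w}$ for all $w$ in the (compact) effective domain. Lower semicontinuity of $\tau$, which follows from Assumption~\ref{assum:MinCondition} and compactness of the state spaces, guarantees in particular that $\tau$ has no downward jump at $0$, so this one-sided Lipschitz bound is compatible with $\tau(0)=v^\primal$.

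With this estimate I would split the argument along the neighborhoods $V_n\ni 0$ furnished by sharpness. On $V_n$ sharpness gives $\psi_n(w_n)\ge c_n\norm{w_n}$, so choosing each $\sigma_n$ large enough that $\sum_n p_n\sigma_n c_n\norm{w_n}>\alpha\norm{w}$ forces $\tau(w)+\sum_n p_n\sigma_n\psi_n(w_n)>\tau(0)$ for every $w\neq 0$ in this regime. Outside the neighborhoods, compactness of $\calX_{a(n)}-\calX_{a(n)}$ together with the penalty property (positivity away from $0$ with shrinking level sets) yields a uniform bound $\psi_n(w_n)\ge\delta_n>0$; since $\tau$ is bounded below on its compact domain, a further increase of $\sigma_n$ dominates the bounded decrease there as well. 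Combining the two regimes forces any optimal $w$ to be $0$, i.e. $z_n=x_{a(n)}$ for all $n$, which is precisely exactness.

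The main obstacle is the one-sided bound $\tau(w)\ge\tau(0)-\alpha\norm{w}$ for the \emph{nonconvex} MILP value function: unlike the convex case it cannot be obtained from a supporting hyperplane, and it genuinely rests on the subadditive generating-function machinery for MILPs. Since the statement is quoted directly from \cite{feizollahi2017exact}, I would lean on that theorem for this estimate rather than re-deriving it, and devote the remaining effort to verifying that the multistage penalty reformulation fits the single-MILP template---rationality of the data, the right-hand-side structure of the coupling constraints $w_n=x_{a(n)}-z_n$, and compactness of the domains---so that the cited theorem applies verbatim.
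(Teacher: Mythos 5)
Your proposal is correct in substance, but note that the paper itself offers \emph{no} proof of this proposition at all: it is stated purely as a citation of Theorem~5 in \cite{feizollahi2017exact}, in contrast to the neighboring Propositions~\ref{prop:ExactPenalConvex}, \ref{prop:ExactPenalFiniteState}, and \ref{prop:ExactPenalC1KKT}, which the appendix proves directly. What you have done is reconstruct the missing argument by transplanting the paper's own perturbation-function technique from the convex case: you define the same $\tau(w)$ as in the proof of Proposition~\ref{prop:ExactPenalConvex}, but replace the supporting-hyperplane inequality $\tau(w)\ge\tau(0)+\innerprod{\lambda}{w}$ (unavailable without convexity) by the one-sided Lipschitz estimate $\tau(w)\ge\tau(0)-\alpha\norm{w}$, which is exactly the structural fact about rational-data MILP value functions (finitely many polyhedral pieces on a compact domain, plus lower semicontinuity at $0$) that powers the cited theorem; your two-regime argument combining sharpness near $0$ with a uniform positive lower bound on $\psi_n$ away from $0$ then parallels the tail of the paper's proof of Proposition~\ref{app:prop:ExactPenalty}. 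This buys something the paper leaves implicit: an explicit verification that the multistage penalty reformulation~\eqref{eq:PenalizedExtensiveForm} really is an instance of the single-MILP template of \cite{feizollahi2017exact} --- the coupling constraints $w_n=x_{a(n)}-z_n$ enter only as right-hand sides, the domains are compact, and rationality is needed only for the constraint data representing $\calF_n$ and $\epi f_n$ (the probabilities $p_n$ sit in the objective, where the piecewise-polyhedral structure of the value function does not require rationality). The one caveat is that your argument is a proof modulo the Blair--Jeroslow estimate, which you correctly flag as resting on the cited reference rather than being re-derived; given that the paper attributes the proposition wholesale to that reference, this division of labor is entirely appropriate.
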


\subsection{Problems defined by $C^1$-functions.}
The problem~\eqref{eq:ExtensiveForm} is said to be defined by $C^1$-functions if it is defined by functional constraints using indicator functions in each node $n\in\calN$:
$$f_n(x_{a(n)},y_n,x_n)=\begin{cases}
&f_{n,0}(x_{a(n)},y_n,x_n),\quad\text{if }g_{n,i}(x_{a(n)},y_n,x_n)\le0,i=1,\dots,I_n,\\
&+\infty\quad\text{otherwise}.\end{cases}$$
with all $f_{n,0},g_{n,i},i=1,\dots,I_n$ being continuously differentiable.
The Karush-Kuhn-Tucker condition at a feasible point $(x_n,y_n)_{n\in\calN}$ of \eqref{eq:ExtensiveForm} says that there exist multipliers $\mu_{n,i}\ge0,i=1,\dots,I_n$, such that
\begin{align*}
	&\nabla_{x_n,y_n}\left\{\sum_{n\in\calN}(f_{n,0}(x_{a(n)},y_n,x_n)-\mu_{n,i}g_{n,i}(x_{a(n)},y_n,x_n)\right\}=0,\\
	&\mu_{n,i}g_{n,i}(x_{a(n)},y_n,x_n)=0,\quad i=1,\dots,I_n.
\end{align*}
We have the following proposition on the exactness.
\begin{proposition}
	\label{prop:ExactPenalC1KKT}
	Suppose the problem~\eqref{eq:ExtensiveForm} is defined by $C^1$-functions and the Karush-Kuhn-Tucker condition holds for every local minimum solution of \eqref{eq:ExtensiveForm}.
	If the penalty functions $\psi_n$ are sharp for all $n\in\calN$, then there exist $\sigma_n>0$ such that the penalty reformulation is exact.
\end{proposition}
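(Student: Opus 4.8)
The plan is to mirror the convex-case argument in Proposition~\ref{prop:ExactPenalConvex}, replacing the \emph{global} subgradient bound on the perturbation function (which relied on convexity) by a \emph{local} Lipschitz lower bound obtained from the KKT condition. Recall the perturbation function
\[
\tau(w):=\min_{(z_n,x_n,y_n)\in\calX_{a(n)}\times\calF_n}\Bigl\{\sum_{n\in\calN}p_nf_n(z_n,y_n,x_n)\Bigm\vert w_n=x_{a(n)}-z_n,\,\forall\,n\in\calN\Bigr\},
\]
so that $v^\primal=\tau(0)$ and $v^\reg=\min_w\{\tau(w)+\sum_{n\in\calN}p_n\sigma_n\psi_n(w_n)\}$. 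Exactness is equivalent to the statement that every minimizer of the right-hand side has $w=0$. I would establish this from two ingredients: (i) a \emph{calmness} estimate $\tau(w)\ge\tau(0)-C\norm{w}$ valid for $w$ in some fixed neighborhood $W$ of the origin, with a finite constant $C$; and (ii) a localization step showing that, for $\sigma_n$ large, every penalized minimizer already has its perturbation $w$ inside $W$ and inside the neighborhoods where the $\psi_n$ are sharp.

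For the localization step, let $(x^*_n,y^*_n,z^*_n)_{n\in\calN}$ be any optimal solution of \eqref{eq:PenalizedExtensiveForm} and set $w^*_n:=x^*_{a(n)}-z^*_n$. Since each $f_n\ge0$ (Assumption~\ref{assum:MinCondition}) and $v^\reg\le v^\primal$, every penalty term is controlled by $p_n\sigma_n\psi_n(w^*_n)\le v^\primal$, hence $\psi_n(w^*_n)\le v^\primal/(p_n\sigma_n)$. Because each $\psi_n$ is a penalty function, the diameter of the level set $\lev_\alpha(\psi_n)$ tends to $0$ as $\alpha\to0$, so for $\sigma_n$ sufficiently large $w^*_n$ is forced into any prescribed neighborhood of the origin. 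I would therefore fix $\sigma_n$ so large that simultaneously $w^*\in W$, each $w^*_n$ lies in the neighborhood $V_n$ on which sharpness $\psi_n(w)\ge c_n\norm{w}$ holds, and $p_n\sigma_nc_n>C$.

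With $w^*$ localized, the contradiction is immediate. Using $\sum_{n}p_nf_n(z^*_n,y^*_n,x^*_n)\ge\tau(w^*)$ and the calmness bound,
\[
v^\reg=\sum_{n\in\calN}p_nf_n(z^*_n,y^*_n,x^*_n)+\sum_{n\in\calN}p_n\sigma_n\psi_n(w^*_n)\ge\tau(0)-C\norm{w^*}+\sum_{n\in\calN}p_n\sigma_nc_n\norm{w^*_n}.
\]
Since $\norm{w^*}\le\sum_{n}\norm{w^*_n}$, the right-hand side is at least $v^\primal+\sum_{n}(p_n\sigma_nc_n-C)\norm{w^*_n}$, and by the choice of $\sigma_n$ each coefficient is positive. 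Thus $w^*\ne0$ would force $v^\reg>v^\primal$, contradicting $v^\reg\le v^\primal$; hence $w^*=0$ and the reformulation is exact.

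The hard part will be ingredient (i): deriving the uniform calmness estimate from the KKT hypothesis. I would argue by contradiction, assuming a sequence $w^k\to0$, $w^k\ne0$, with $\tau(w^k)\le v^\primal-k\norm{w^k}$; taking (near-)optimal solutions of the perturbed problems and passing to a convergent subsequence yields a feasible point $(\bar x,\bar y)$ of \eqref{eq:ExtensiveForm} attaining $v^\primal$, i.e.\ a global minimizer, at which KKT holds with multipliers $\mu_{n,i}\ge0$. A first-order Taylor expansion of the $C^1$ data $f_{n,0}$ and $g_{n,i}$ around $(\bar x,\bar y)$, combined with Lagrangian stationarity and complementary slackness, bounds the marginal decrease of the optimal value along $w^k/\norm{w^k}$ by a finite quantity determined by the multiplier-weighted gradient norms at $(\bar x,\bar y)$, contradicting the unbounded rate $k\to\infty$. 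The delicate points here are ensuring that the same constant $C$ works on a neighborhood shared by the whole (compact) set of minimizers, which relies on the $C^1$ gradients being continuous and the KKT multipliers remaining bounded, and that the near-optimal perturbed solutions indeed accumulate at genuine KKT points; this is precisely where the standing assumption that KKT holds at every local minimum is used.
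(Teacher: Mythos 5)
You follow essentially the same route as the paper: exactness is reduced to a local calmness bound for the perturbed value function, obtained from the KKT conditions at unperturbed global minimizers together with first-order expansions of the $C^1$ data, and then combined with sharpness of the $\psi_n$ and the level-set property of penalty functions. The packaging differs. The paper proves a generic single-penalty statement (Proposition~\ref{app:prop:ExactPenalty}): a local bound $f(x,u)\ge f(x_0,0)-L(x_0)\norm{u}$ near each optimal $x_0$, a finite covering of the compact optimal set to make the constant uniform, the lower-semicontinuity Lemma~\ref{app:lemma:PrimalFunctonalLSC} to exclude feasible points away from the covered region, and a choice of a single $\sigma$ on the compact perturbation ball, after which the multistage statement follows with $\sigma_n=\sigma/p_n$. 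You instead work directly with the multistage perturbation function $\tau(w)$ and per-node penalties, replace the covering-plus-lsc machinery by a sequential-compactness contradiction, and localize penalized minimizers via the shrinking level sets of $\psi_n$. Your localization step and the final accounting $v^\reg\ge v^\primal+\sum_{n}(p_n\sigma_n c_n-C)\norm{w_n^*}$ are correct, and this organization is arguably cleaner than the paper's.

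The gap is your ingredient (i), which you leave as a sketch; it is exactly where the paper's proof spends its effort, and as sketched it does not close. The KKT/Taylor computation at a global minimizer $\bar x$ gives, for minimizers $x^k$ attaining $\tau(w^k)$, only $\tau(w^k)\ge v^\primal+\innerprod{v}{w^k}+o(\norm{x^k-\bar x}+\norm{w^k})$, where $v$ is the multiplier-weighted gradient vector: the error is little-$o$ of $\norm{x^k-\bar x}+\norm{w^k}$, \emph{not} of $\norm{w^k}$. Nothing in your argument forces $\norm{x^k-\bar x}=O(\norm{w^k})$, and when the perturbed minimizers approach $\bar x$ much more slowly than $w^k\to0$ the error term swamps every multiple of $\norm{w^k}$, so no contradiction with $\tau(w^k)\le v^\primal-k\norm{w^k}$ is reached. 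Concretely, take a leaf node with internal variable $y\in[-1,1]$, cost $1-y^2$, constraint $y^3=z$ on the copy $z$ of the parent state, and the parent state pinned at $0$: KKT holds at the unique feasible point with zero multipliers (the cost gradient and the $y$-derivative of the constraint both vanish there), yet $\tau(w)=1-\abs{w}^{2/3}$ with $\norm{x^k-\bar x}\sim\abs{w^k}^{1/3}$, so the marginal decrease is unbounded and no finite penalty is exact. Note also that the multiplier boundedness you invoke is equivalent to MFCQ and is not implied by the stated hypothesis that multipliers merely exist. In fairness, the paper's own proof passes over the same point (its ``by the definition of the little-$o$ notation'' step), so your plan is faithful to the paper; but completing ingredient (i) requires either controlling the distance of perturbed minimizers to the optimal set or strengthening the constraint qualification beyond bare existence of KKT multipliers.
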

We give the proof of Proposition~\ref{prop:ExactPenalC1KKT} below.

\subsubsection{Proof for Proposition~\ref{prop:ExactPenalC1KKT}.}
\label{subsec:ProofExactPenalC1KKT}

We begin by stating a general exact penalization result for problems defined by $C^1$-functions.
Consider the following perturbation function
\begin{align}
	p(u)\coloneqq\min_{x\in\R^d}\quad& f(x,u)\label{app:eq:PenaltyExtensiveForm}\\
	\mathrm{s.t.}\quad&g_i(x,u)\le 0,\quad i=1,\dots,I,\notag\\
	& h_j(x,u)=0,\,\quad j=1,\dots,J,\notag
\end{align}
Here $u$ is the perturbation vector and $u=0$ corresponds to the original primal problem. 
Let $\psi$ be a penalty function on $\R^d$ and $\sigma>0$ a penalty factor.
A penalization of the original primal problem $p(0)$ is given by
\begin{align}
\min_{x\in\R^d}\quad& f(x,u)+\sigma\psi(u)\label{app:eq:PenaltyReformulation}\\
\mathrm{s.t.}\quad& g_i(x,u)\le0,\,\quad i=1,\dots,I,\notag\\
&h_j(x,u)= 0,\quad j=1,\dots,J.\notag
\end{align}
Naturally we could impose some bound on the perturbation as $\norm{u}\le R_u$. 
We assume that $f,g_i,h_j$ are continuously differentiable in $x$ and $u$ for all $i,j$.
Moreover, the compactness in Assumption~\ref{assum:MinCondition} implies that the feasible region prescribed by the inequality constraints $g_i(x,u)\le0$ are compact in $x$ for any $u$, i.e., $X=\{x\in\R^d:\exists u,\norm{u}\le R_u,\mathrm{s.t.}g_i(x,u)\le0,i=1,\dots,J\}$ is compact.
For example, some of the inequalities are bounds on the variables, $\norm{x}_\infty\le1$. 
We will show that there exists a penalty factor $\sigma>0$ such that any optimal solution to \eqref{app:eq:PenaltyReformulation} is feasible to \eqref{app:eq:PenaltyExtensiveForm}.
We next characterize the property of the perturbation function $p(u)$.
\begin{lemma}\label{app:lemma:PrimalFunctonalLSC}
	The perturbation function $p(u)$ is lower semicontinuous.
\end{lemma}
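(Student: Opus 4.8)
The plan is to establish lower semicontinuity through the standard sequential characterization: I will show that for every sequence $u^k\to u^*$ one has $\liminf_{k\to\infty}p(u^k)\ge p(u^*)$. First I would reduce to the nontrivial case. Setting $L:=\liminf_{k\to\infty}p(u^k)$ and passing to a subsequence, I may assume $p(u^k)\to L$. If $L=+\infty$ the desired inequality holds trivially because $p(u^*)\le+\infty$, so I only need to treat $L<+\infty$, in which case $p(u^k)<+\infty$ for all large $k$, meaning the feasible region of \eqref{app:eq:PenaltyExtensiveForm} at $u^k$ is nonempty.

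Next I would produce minimizers and extract a convergent subsequence. For each large $k$, the feasible set $\{x:g_i(x,u^k)\le0,\ h_j(x,u^k)=0\}$ is a closed subset of the compact set $X$, closed because each $g_i$ and $h_j$ is continuous, and $f(\cdot,u^k)$ is continuous, so the minimum defining $p(u^k)$ is attained at some $x^k\in X$. Since $X$ is compact, after passing to a further subsequence I may assume $x^k\to x^*\in X$.

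Then I would verify feasibility and pass to the limit in the objective. By continuity of $g_i$ and $h_j$ in $(x,u)$, taking limits in $g_i(x^k,u^k)\le0$ and $h_j(x^k,u^k)=0$ yields $g_i(x^*,u^*)\le0$ and $h_j(x^*,u^*)=0$, so $x^*$ is feasible for the problem at $u^*$. Continuity of $f$ then gives $f(x^*,u^*)=\lim_{k}f(x^k,u^k)=\lim_k p(u^k)=L$, and by the definition of $p(u^*)$ as an infimum over feasible points, $p(u^*)\le f(x^*,u^*)=L=\liminf_k p(u^k)$. This is precisely the claimed lower semicontinuity.

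The crux of the argument, and the only place the hypotheses are truly essential, is the compactness of $X$: it guarantees that the minimizers $x^k$ do not escape to infinity and admit a convergent subsequence whose limit remains feasible, after which continuity of the defining functions transfers both feasibility and the objective value to the limit. The remaining ingredients, namely the subsequence bookkeeping and the trivial infeasible or $+\infty$ case, are routine.
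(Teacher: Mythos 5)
Your proof is correct and follows essentially the same route as the paper's: attain minimizers $x^k$ in the compact set $X$, extract a convergent subsequence, and pass to the limit using continuity of $f$, $g_i$, $h_j$ (the paper merely phrases this as a contradiction rather than the direct sequential characterization). If anything, your version is slightly more careful, since you explicitly verify feasibility of the limit point $x^*$ at $u^*$ and dispose of the $L=+\infty$ case, steps the paper's proof leaves implicit.
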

\begin{proof}
	Let $X(u)\subset X$ denote the feasible set in $x$ dependent on $u$.
	The minimum in the definition is well defined for every $u$ due to the compactness of $X(u)$.\\
	We show that $p(u)$ is lower semicontinuous (lsc) by showing $\liminf_{v\to u}p(v)\ge p(u)$ for any $u$. 
	Assume for contradiction that for any $\epsilon>0$, there exists a sequence $\{v_k\}_{k=1}^\infty$ such that $v_k\to u$ and $p(v_k)\le p(u)-\epsilon$. 
	Let $x_k\in\argmin f(x,v_k)$ and thus $p(v_k)=f(x,v_k)$. 
	Since $X$ is compact, there exists a subsequence $x_{k_j}$ and $z\in X$ such that $x_{k_j}\to z$ as $j\to\infty$.
	Then by continuity of $f$, $f(z,u)=\lim_{j\to\infty}f(x_{k_j},v_{k_j})\le p(u)-\epsilon$. 
	This contradicts with the definition of $p(u)$, since $p(u)=\min_{x\in X(u)}f(x,u)\le f(z,u)\le p(u)-\epsilon$. 
	Therefore $p(u)$ is lsc.
	\qed
\end{proof}

Now we give the theorem of exact penalization for problems defined by $C^1$-functions. 
\begin{proposition}
	\label{app:prop:ExactPenalty}
	If the Karush-Kuhn-Tucker condition is satisfied at every local minimum solution of \eqref{app:eq:PenaltyExtensiveForm}, then the penalty reformulation \eqref{app:eq:PenaltyReformulation} is exact for some finite $\sigma>0$.
\end{proposition}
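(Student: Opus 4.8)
The plan is to prove the statement by reducing it to \emph{calmness} of the value function $p$ at the origin, and then establishing that calmness by a contradiction argument driven by the KKT hypothesis. First I would record the elementary reductions. Since $X$ is compact and $f$ is continuous, $p(u)\ge c_0:=\min_{x\in X,\,\norm{u}\le R_u}f(x,u)>-\infty$, and $p(0)=v^{\primal}$. Writing the reformulation \eqref{app:eq:PenaltyReformulation} in value-function form as $v^{\reg}(\sigma)=\min_{\norm{u}\le R_u}\{p(u)+\sigma\psi(u)\}$ (the minimum is attained because $p$ is l.s.c.\ by Lemma~\ref{app:lemma:PrimalFunctonalLSC} and $\psi$ is continuous on the compact ball), exactness means that for some finite $\sigma$ every minimizer has $u=0$. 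The region $\{\,\delta\le\norm{u}\le R_u\,\}$ is harmless: there $\psi\ge\psi_{\min}>0$ by continuity and positivity of $\psi$ away from the origin, so $p(u)+\sigma\psi(u)\ge c_0+\sigma\psi_{\min}>p(0)$ once $\sigma$ is large. Hence everything reduces to a neighborhood of the origin, and the claim follows if I can show $p$ is \emph{calm from below} at $0$, i.e.\ there are $M,\delta>0$ with $p(u)\ge p(0)-M\norm{u}$ for $\norm{u}\le\delta$: combined with sharpness of $\psi$ (so $\psi(u)\ge c\norm{u}$ near $0$), taking $\sigma>M/c$ gives $p(u)+\sigma\psi(u)>p(0)$ for all small $u\neq0$.

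To establish the calmness bound I would argue by contradiction. Suppose no such $M$ exists; then there is a sequence $u_k\to0$, $u_k\neq0$, with $(p(0)-p(u_k))/\norm{u_k}\to+\infty$. Let $x_k$ attain $p(u_k)$ (again using lower semicontinuity and compactness of $X$), and pass to a subsequence with $x_k\to\bar{x}$. By continuity of the constraint functions, $(\bar{x},0)$ is feasible for the unperturbed problem, and $f(\bar{x},0)=\lim_k f(x_k,u_k)=\lim_k p(u_k)\le p(0)$, so $\bar{x}$ is a global, hence local, minimizer of the $u=0$ problem. The KKT hypothesis then supplies multipliers $\mu\ge0$, $\nu$ with complementary slackness and $\nabla_x L(\bar{x},0)=0$ for the Lagrangian $L(x,u):=f(x,u)+\sum_i\mu_i g_i(x,u)+\sum_j\nu_j h_j(x,u)$. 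Because $\mu_i\ge0$, $g_i(x_k,u_k)\le0$, and $h_j(x_k,u_k)=0$, we get $p(u_k)=f(x_k,u_k)\ge L(x_k,u_k)$, and a first-order Taylor expansion of the $C^1$ function $L$ about $(\bar{x},0)$—in which the $x$-gradient term vanishes by stationarity and the $u$-term is bounded by $\norm{\nabla_u L(\bar{x},0)}\,\norm{u_k}$—yields $p(u_k)\ge p(0)-M\norm{u_k}-r_k$ with $r_k=o(\norm{x_k-\bar{x}}+\norm{u_k})$.

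The main obstacle is the residual $o(\norm{x_k-\bar{x}})$: a priori the primal minimizers $x_k$ may approach $\bar{x}$ more slowly than $u_k\to0$, so this term need not be $O(\norm{u_k})$ and the naive estimate does not close. Resolving this is the crux of the proof, and it is precisely where the hypothesis that KKT holds at \emph{every} local minimizer of the perturbed family \eqref{app:eq:PenaltyExtensiveForm}, rather than at $\bar{x}$ alone, is needed. I would instead apply KKT at the perturbed minimizers $x_k$, obtaining multipliers $(\mu^k,\nu^k)$ with $\nabla_x L_k(x_k,u_k)=0$ and $f(x_k,u_k)=L_k(x_k,u_k)$ exactly; the key step is to show these multipliers remain bounded as $k\to\infty$, since a normalized limit of unbounded multipliers would produce a nonzero $(\tilde{\mu},\tilde{\nu})$ with $\sum_i\tilde{\mu}_i\nabla_x g_i(\bar{x},0)+\sum_j\tilde{\nu}_j\nabla_x h_j(\bar{x},0)=0$, a degenerate Fritz--John certificate incompatible with KKT holding with objective multiplier one throughout a neighborhood of $\bar{x}$. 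With bounded multipliers one passes to a limiting KKT pair and upgrades the expansion to the genuine lower bound $p(u_k)\ge p(0)-M\norm{u_k}$, contradicting $(p(0)-p(u_k))/\norm{u_k}\to+\infty$ and thereby establishing calmness and the desired exactness. I expect this boundedness step to demand the most care, as it is exactly where the absence of an a priori constraint qualification must be compensated by the blanket KKT assumption.
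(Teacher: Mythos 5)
Your reduction to calmness of $p$ at $0$ and the sequential compactness setup are sound, and you have correctly isolated the genuine subtlety: the Taylor remainder $o(\|x_k-\bar{x}\|+\|u_k\|)$ cannot be absorbed into a multiple of $\|u_k\|$ when the perturbed minimizers $x_k$ approach $\bar{x}$ much more slowly than $u_k\to0$. (This is in fact the very step the paper's own proof of Proposition~\ref{app:prop:ExactPenalty} glosses over: the ``$+1$'' slack in $L(x_0)$ only controls remainders of size $O(\|u\|)$, not $o(\|x-x_0\|)$, so the claimed inequality $f(x,u)-f(x_0,0)\ge-L(x_0)\|u\|$ on a product neighborhood does not follow from the little-$o$ estimate.) However, your repair fails at the multiplier-boundedness step, and the failure is not cosmetic. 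The KKT condition holding with objective multiplier one at every local minimum does \emph{not} preclude unbounded multiplier sequences: nonempty-and-bounded multiplier sets are equivalent to the Mangasarian--Fromovitz constraint qualification (Gauvin), which is strictly stronger than KKT, and a nonzero singular Fritz--John certificate can coexist with valid KKT multipliers at the same point, so the ``incompatibility'' you invoke is not a contradiction. Concretely, take $d=1$, $f(x,u)=-x^2$, $g_1(x,u)=x^3-u$, $g_2=-x$, $g_3=x-1$, $\psi(u)=|u|$. At $u=0$ the feasible set is $\{0\}$ and KKT holds at $x=0$ with zero multipliers; for $u>0$ the unique local minimizer is $x=u^{1/3}$, where KKT also holds but with multiplier $\tfrac{2}{3}u^{-1/3}\to\infty$, whose normalized limit $(1,0,0)$ is exactly a degenerate certificate since $\nabla_x g_1(0,0)=0$. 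Here $p(u)=-u^{2/3}$, so calmness fails and $p(u)+\sigma|u|<p(0)$ for all small $u>0$, for every finite $\sigma$: no exact penalization. Note that this example satisfies even your \emph{stronger} reading of the hypothesis (KKT at every local minimum of every perturbed problem), so no refinement of the limiting argument can close the gap at this level of generality.

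Two further mismatches are worth recording. First, the hypothesis you lean on is stronger than what is available: Proposition~\ref{prop:ExactPenalC1KKT} and the paper's proof assume and use KKT only at minimizers of the \emph{unperturbed} problem, i.e.\ at points of $X_{\opt}(0)$, never at minimizers of the problems \eqref{app:eq:PenaltyExtensiveForm} with $u\neq0$. Second, even granting bounded multipliers $(\mu^k,\nu^k)\to(\bar\mu,\bar\nu)$, your concluding ``upgrade'' does not go through as stated: expanding the Lagrangian around $(\bar{x},0)$ reproduces the same $o(\|x_k-\bar{x}\|)$ residual you set out to eliminate, while expanding around $(x_k,u_k)$ and using stationarity there yields an \emph{upper} bound on $p(u_k)$ rather than the required lower bound. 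What bounded multipliers (i.e.\ MFCQ) actually buy is a Robinson/Gauvin--Dubeau stability estimate that lets one project $x_k$ onto the unperturbed feasible set at distance $O(\|u_k\|)$ --- a mechanism genuinely different from passing to a limiting KKT pair. So the proposal does not prove the statement; any complete argument must either impose an MFCQ-type qualification or exploit the special structure of the intended application, where the perturbed constraints are the duplication constraints $z_n-x_{a(n)}=u_n$, whose gradients are constant and nonvanishing.
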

\begin{proof}
	Let $X(u)$ denote the feasible region of $x$ defined by constraints $g_i(x,u)\le0,\ i=1,\dots,I$ and $h_j(x,u)=0,\ j=1,\dots,J$. 
	Then $X(u)$ is compact for any $u$ by the continuity of the constraint functions. 
	We show that for every optimal solution $x_0\in X(0)$, there exists a neighborhood $V(x_0)\ni x_0$ in the $x$ space, $U(x_0)\ni u=0$ in the $u$ space, and constant $L(x_0)>0$, such that for all $x\in V(x_0)$ and $u\in U(x_0)$, we have
	$f(x,u)\ge f(x_0,0)-L(x_0)\cdot\norm{u}$.
	Then we use this fact together with compactness of $X(0)$ to show the existence of exact penalization. In this proof, the little-$o$ is used to simplify notation, i.e., $o(\norm{a})$ denotes a function $b(a)$ such that
	 $\lim_{a\to0}\abs{b(a)}/\norm{a}=0.$
	
	Pick any optimal solution $x_0\in X(0)$. 
        By definition, it is also a local minimum solution. 
        By hypothesis, the KKT condition is satisfied at $x_0$, that is, there exist $\lambda_i\in\R,\ i=1,\dots,I,$ and $\mu_j\ge 0,\ j=1,\dots,J$ such that
	\begin{align}
		&\nabla_x f(x_0,0)+\sum_{i=1}^{I}\lambda_i \nabla_x g_i(x_0,0)+\sum_{j=1}^J \mu_j\nabla_x h_j(x_0,0)=0,\label{app:eq:PenaltyKKT}\\
		&h_j(x_0,0)=0,\ j=1,\dots,J,\notag\\
		&g_i(x_0,0)\le0,\quad \lambda_i\cdot g_i(x_0,0)=0,\ i=1,\dots,I.\notag
	\end{align}
	Since $h_j$'s are continuously differentiable and $h_j(x_0,0)=0$, we have
	\begin{equation}
		\innerprod{\nabla_x h_j(x_0,0)}{x-x_0}+\innerprod{\nabla_u h_j(x_0,0)}{u}+o(\norm{x-x_0}+\norm{u})=0,\ j=1,\dots,J.\label{app:eq:PenaltyFirstOrderEquality}
	\end{equation}
	Let $A\subset I$ denote the set of active inequality constraints. Then similarly we have
	\begin{equation}
		\innerprod{\nabla_x g_i(x_0,0)}{x-x_0}+\innerprod{\nabla_u g_i(x_0,0)}{u}+o(\norm{x-x_0}+\norm{u})\le0,\ i\in A.\label{app:eq:PenaltyFirstOrderInequality}
	\end{equation}
	For any $i\notin A$, by the continuity of $g_i$, there exist neighborhoods $W_i$ of $x_0$ and $U'_i$ of $u=0$ such that for any $(x,u)\in W_i\times U'_i$, $g_i(x,u)<0$ remains inactive. 
	Now, from \eqref{app:eq:PenaltyKKT}, \eqref{app:eq:PenaltyFirstOrderEquality}, \eqref{app:eq:PenaltyFirstOrderInequality}, and $f$ being continuously differentiable, we have
	\begin{align*}
		&f(x,u)-f(x_0,0)\notag\\
		&=\innerprod{\nabla_x f(x_0,0)}{x-x_0}+\innerprod{\nabla_u f(x_0,0)}{u}+o(\norm{x-x_0}+\norm{u}) \\
		&=\bangle{-\sum_{i\in A}\lambda_i \nabla_x g_i(x_0,0)-\sum_{j=1}^J\mu_j\nabla_x h_j(x_0,0)}{x-x_0}+\innerprod{\nabla_u f(x_0,0)}{u}+o(\norm{x-x_0}+\norm{u}) \notag\\
		&\ge\bangle{\nabla_u f(x_0,0)+\sum_{i\in A}\lambda_i \nabla_u g_i(x_0,0)+\sum_{j=1}^J\mu_j\nabla_u h_j(x_0,0)}{u}+o(\norm{x-x_0}+\norm{u}) \notag\\
		&>-L(x_0)\cdot\norm{u}+o(\norm{x-x_0}+\norm{u}),\notag
	\end{align*}
	where $L(x_0)\coloneqq\nVert{\nabla_u f(x_0,0)+\sum_{i\in A}\lambda_i \nabla_u g_i(x_0,0)+\sum_{j=1}^J\mu_j\nabla_u h_j(x_0,0)}+1>0$. 
	By the definition of the little-$o$ notation, there exists a neighborhood $V(x_0)\subset\cap_{i\notin A}W_i,\ x_0\in V(x_0)$ and $U(x_0)\subset\cap_{i\notin A}U_i,\ 0\in U(x_0)$ such that 
	\begin{equation*}
		f(x,u)-f(x_0,0)\ge -L(x_0)\cdot\norm{u},\quad\forall\,(x,u)\in V(x_0)\times U(x_0).\label{app:eq:PenaltyKeyInequality}
	\end{equation*}

	Now, let $X_{\opt}(0)$ denote the set of optimal solutions of $x$ when $u=0$. 
	Note that $X_{\opt}(0)\subset X(0)$ is closed due to the continuity of $f,h_i,g_j$, hence compact. 
	The collection of open sets $\{V(x)\}_{x\in X_{\opt}(0)}$ covers $X_{\opt}(0)$. 
	By compactness, there exists a finite subcollection $\{V(x_k)\}_{k=1}^K$ such that $X_{\opt}(0)\subset\cup_{k=1}^K V(x_k)\eqqcolon V$. 
	Let $L\coloneqq\max_{k=1,\dots,K}L(x_k)$ and $U=\cap_{k=1}^K U(x_k)$. Let $f^*$ denote the optimal value for $u=0$. Then we have
	\begin{equation*}
		f(x,u)\ge f^*- L\cdot\norm{u},\quad\forall\,(x,u)\in V\times U. 
	\end{equation*}
	To show the inequality for $x\notin V$, define
        $\tilde{p}(u):=\min_{x\in X(u)\setminus V}f(x,u)$.
	Note that $\tilde{p}(0)>f^*$ by the definition of $X_{\opt}(0)$.
	Then by Lemma~\ref{app:lemma:PrimalFunctonalLSC}, $p(u)$ is lower semicontinuous, and we know that there exists a neighborhood $U'$ of $0$ such that $\tilde{p}(u)>f^*$ for all $u\in U'$. 
	Therefore, for all $u\in U\cap U'$, we have
        $f(x,u)\ge f^*- L\cdot \norm{u}$.
	Finally, we can show that the penalization is exact. 
	Since $\psi$ is sharp, there exist an open set $\tilde{U}\subset U\cap U'$, and positive constants $c>0$ such that 
	$$\psi(u)\ge c\norm{u}\text{ on }\tilde{U}.$$
	Let $M=\min_{u\in \bar{B}_{R_u}(0)\setminus\tilde{U}}\tilde{p}(u)>f^*$, $m=\min_{u\in \bar{B}_{R_u}(0)\setminus\tilde{U}}\psi(u)>0$ because $\psi$ is a penalty function. 
	Let $\sigma=(M-f^*)/m+1$. 
	We have
        $f(x,u)\ge f^*- \sigma\cdot \norm{u},\quad\forall\,u\in\bar{B}_{R_u}(0)\setminus\{0\},\ x\in \cup_u X(u)$.
        As a result, any optimal solution to the penalization~\eqref{app:eq:PenaltyReformulation} would satisfy $u=0$.
	\qed
\end{proof}

Note that our problem~\eqref{eq:PenalizedExtensiveForm} can be written into the form~\eqref{app:eq:PenaltyReformulation} by letting $u=(x_{a(n)},z_n)_{n\in\calN}$, and including the duplicate constraints
$z_n-x_{a(n)}=0$ for any $n\neq r\in\calN$
in the equality constraints $h_j(x,u)=0$.
And other constraints $g_i(x,u)\le0$ correspond to the functional constraints in the problem~\eqref{eq:ExtensiveForm}.
Since $\psi_n$ are sharp, the aggregate penalty function defined by
$\psi(u)=\sum_{n\in\calN}p_n\psi_n(x_{a(n)}-z_n)$
is also sharp.
Let $\sigma$ denote the penalty factor in Proposition~\ref{app:prop:ExactPenalty}.
Proposition~\ref{prop:ExactPenalC1KKT} follows from this by letting $\sigma_n=\sigma/p_n$ for all $n\in\calN$.

\end{document}